\documentclass{article}
\usepackage{amsmath,amsfonts,amssymb,bm,cite,graphicx,algorithm,algorithmic,amsthm,hyperref}
\usepackage{epsfig,subfigure}
\usepackage{fullpage,color}
\newtheorem{theorem}{Theorem}
\newtheorem{definition}{Definition}

\newtheorem{lemma}{Lemma}
\def\eop{{\hfill\vbox{\hrule height .3pt
      \hbox{\vrule width.3pt height 7pt
      \kern 7pt
      \vrule width .3pt}
      \hrule height .3pt}} \par\bigskip}
  
  \usepackage{mathtools}
  \mathtoolsset{showonlyrefs}
  
\DeclareMathOperator{\rank}{rank}

\def\norm #1{\left\|#1\right\|}

\def\twon #1{\left\|#1\right\|_2}

\def\frobn #1{\left\|#1\right\|_{\text{F}}}

\def\abs #1{\left|#1\right|}
\def\inp #1{\left\langle#1\right\rangle}

\def\st{\text{subject to }}

\def\bC{\mathbb{C}}

\def\bR{\mathbb{R}}
\def\bE{\mathbb{E}}
\def\bP{\mathbb{P}}

\def\bN{\mathbb{N}}

\def\bZ{\mathbb{Z}}

\def\m #1{\boldsymbol{#1}}

\def\cA{\mathcal{A}}

\def\cC{\mathcal{C}}
\def\cD{\mathcal{D}}

\def\cG{\mathcal{G}}
\def\cH{\mathcal{H}}
\def\cI{\mathcal{I}}

\def\cN{\mathcal{N}}
\def\cO{\mathcal{O}}
\def\cP{\mathcal{P}}

\def\cS{\mathcal{S}}
\def\cT{\mathcal{T}}

\def\ccn{\mathcal{CN}}

\def\bee{\begin{equation}}
	\def\ene{\end{equation}}

\def\beq{\begin{eqnarray}}
	\def\enq{\end{eqnarray}}

\newtheorem{rem}{Remark}

\def\equ #1{\begin{equation}#1\end{equation}}

\def\sbra #1{\left(#1\right)}
\def\mbra #1{\left[#1\right]}
\def\lbra #1{\left\{#1\right\}}
\def\diag #1{\text{diag}#1}

\def\st {\text{ subject to }}

\begin{document}

\title{Lifted Restricted Isometry Property and Fast Algorithm for Spectral Compressed Sensing With Near-Optimal Sample Complexity}
\date{ }

\footnotetext[1]{
	Department of Mathematics, Hong Kong University of Science and Technology,
	Kowloon, Hong Kong SAR, China.
	E-mail: \texttt{xunmeng.wu@xjtu.edu.cn}, \texttt{jfcai@ust.hk}
}

\footnotetext[2]{
	School of Mathematics and Statistics, Xi'an Jiaotong University,
	Xi'an, Shaanxi, China.
	E-mail: \texttt{zbxu@xjtu.edu.cn},
	\texttt{yangzai@xjtu.edu.cn}
}

\author{
	Xunmeng Wu$^{1}$ \and
	Jian-Feng Cai$^{1}$\thanks{Corresponding author.} \and
	Zongben Xu$^{2}$ \and
	Zai Yang$^{2}$
}

\maketitle

\begin{abstract}
This paper studies the spectral compressed sensing problem, which aims to recover an $r$-spectrally-sparse signal of length $N$ from linear compressive measurements. We reformulate the problem as the recovery of a low-rank Hankel matrix. Motivated by this formulation, we propose a Lifted Restricted Isometry Property (LRIP) for the measurement matrix and show that Gaussian measurement matrices satisfy LRIP under certain conditions. Unlike classical RIP, LRIP imposes a rank constraint in the lifted matrix domain while taking measurements in the signal domain through the adjoint Hankel operator. To solve the resulting nonconvex problem efficiently, we introduce a Fast Iterative Hard Thresholding (FIHT) algorithm. Based on LRIP, we show that FIHT converges linearly to the true signal with high probability using $\mathcal{O}(r \ln r \ln^2 N)$ scaled Gaussian measurements, which is near-optimal up to log factors and improves the state-of-the-art order of $r^2$ for nonconvex Hankel methods. Extensive numerical results demonstrate the superiority of FIHT as compared to state-of-the-art convex methods in terms of phase transitions, computational time, and robustness to noise.
\end{abstract}

\noindent \textbf{Keywords:} spectrally sparse signal, restricted isometry property, low-rank Hankel matrix, sample complexity

\section{Introduction}
Spectral compressed sensing refers to the recovery of a spectrally sparse signal from its compressive measurements. A vector $\m{x}_\star \in \bC^N$ is said to be spectrally sparse if it is a superposition of a few complex exponential functions, namely, its entries satisfy
\equ{
	x_{\star,j} = \sum^r_{k=1}s_k e^{\sbra{\mathrm{i} 2\pi f_k-\tau_k} j}, \quad j = 0,1,\ldots,N-1,
	\label{eq:signal}
}
where $f_k\in[0,1)$, $\tau_k \in \bR$, and $s_k\in\bC$ denote the distinct frequency, damping factor, and nonzero complex amplitude of the $k$-th spectral component, respectively, and $\mathrm{i}=\sqrt{-1}$. The sparsity arises from the fact $r \ll N$. Spectrally sparse signals appear in numerous applications, including magnetic resonance imaging \cite{lustig2007sparse}, nuclear magnetic resonance (NMR) spectroscopy \cite{qu2015accelerated}, array signal processing \cite{yang2018sparse,wu2024coffee}, wireless communication \cite{tse2005fundamentals}, and analog-to-digital conversion \cite{tropp2009beyond}.

In this paper, we aim to recover the signal $\m{x}_\star$ from linear compressive measurement
\equ{
	\m{y} = \m{A}\m{x}_\star,
}
where $\m{A}\in \bC^{m\times N}$ with $m<N$ is the measurement matrix. This problem arises in various contexts, including compressed sensing of spectrally sparse bandlimited signals via the random demodulator \cite{tropp2009beyond}, accelerated NMR spectroscopy using linear subsampling \cite{qu2015accelerated}, and radar imaging with continuous time-frequency shifts \cite{heckel2017generalized}.
Note in \eqref{eq:signal} that the number of real unknowns to determine $\m{x}_\star$ is $4r$, which is smaller than the ambient dimension $N$ since $r \ll N$. Hence, it is possible to recover $\m{x}_\star$ from $\m{y}$. The recovery problem can be formulated as
\equ{
	\min_{\m{x} \in \bC^N} \twon{\m{A}\m{x} - \m{y}}^2, \st \m{x} \text{ is $r$-spectrally-sparse}. \label{eq:modelx}
}
Due to the nonlinear and nonconvex property of the spectral sparsity constraint, it is challenging to directly solve \eqref{eq:modelx}.

To address this difficulty, we exploit the Hankel matrix of the signal for recovery. 
Let $\cH \m{x}$ denote an $n_1 \times n_2$ Hankel matrix formed by using $\m{x}\in\bC^N$ with $n_1+n_2-1=N$, i.e.,
\begin{equation} 
	\cH \m{x}
	= \begin{bmatrix} x_{0} & x_{1} & \dots & x_{n_2-1} \\ x_{1} & x_{2} & \dots & x_{n_2} \\ \vdots & \vdots & \ddots & \vdots \\ x_{n_1-1} & x_{n_1} & \dots & x_{N-1} \end{bmatrix}. \label{eq:Hankel}
\end{equation}
Throughout this paper, we assume $n_1\le n_2$ without loss of generality; otherwise, we consider the transpose of $\cH\m{x}$.
We reformulate problem \eqref{eq:modelx} as a fixed-rank Hankel matrix recovery problem:
\equ{
	\min_{\m{x}} \twon{\m{A}\m{x} - \m{y}}^2, \st \rank\sbra{\cH\m{x}} = r. \label{eq:model0}
}
The insight underlying this reformulation is that the Hankel matrix $\cH\m{x}_\star$ of a spectrally sparse signal $\m{x}_\star$ has rank $r$ and is therefore low-rank when $r < \min\lbra{n_1,n_2}$ \cite{cai2018spectral,cai2019fast}. Conversely, by Kronecker's theorem \cite{ellis1992factorization}, a signal $\m{x}$ is spectrally sparse whenever its Hankel matrix $\cH\m{x}$ is low-rank, except for degenerate cases. 
This insight has led to extensive use of low-rank Hankel structures in spectral compressed sensing \cite{chen2014robust,andersson2014new,cai2016robust,cai2018spectral,cai2019fast,yang2024new,chen2022vectorized,li2024projected,cai2025accelerating}; see \cite{cai2026spectral} for a comprehensive overview.

In this paper, we study problem \eqref{eq:model0}. We propose a Lifted Restricted Isometry Property (LRIP), develop a Fast Iterative Hard Thresholding (FIHT) algorithm for efficient recovery, and establish exact recovery guarantees with near-optimal sample complexity. Our main contributions are summarized as follows:

\begin{itemize}
	\item For problem \eqref{eq:model0}, we propose a LRIP for the measurement matrix and prove that Gaussian measurements satisfy LRIP with $\mathcal{O}(r \ln^2 N)$ samples. Unlike the classical RIP \cite{candes2005decoding} for sparse vectors and the matrix RIP \cite{candes2011tight} for low-rank matrices, our LRIP imposes a rank constraint in the lifted matrix domain while taking measurements of the vectors obtained by applying the adjoint Hankel operator to these matrices (see Section \ref{sec:LRIP}).
	
	\item To efficiently solve \eqref{eq:model0}, we develop a FIHT algorithm with two initialization schemes: one-step hard thresholding and resampling-based FIHT (see Section \ref{sec:alg}).
	
	\item We establish exact recovery guarantees for FIHT based on the LRIP. When initialized by one-step hard thresholding, FIHT converges linearly to the true signal with high probability provided that the number of Gaussian measurements is $\mathcal{O}(r^2 \ln^2 N)$. When initialized by resampling, FIHT also converges linearly to the true signal with high probability under a reduced sample complexity of $\mathcal{O}(r \ln r \ln^2 N)$, which is near-optimal up to log factors and improves the state-of-the-art order of $r^2$ for nonconvex Hankel methods \cite{cai2019fast,cai2018spectral} (see Section \ref{sec:theory}).
\end{itemize}

In addition to the contributions above, we provide numerical results in Section \ref{sec:numeirical} to demonstrate the efficiency of FIHT. The proofs of the main results are given in Section \ref{sec:proof}, conclusion is presented in Section \ref{sec:conclusion}, and additional proofs are provided in the appendices.

\subsection{Related Work}
In \cite{cai2016robust}, Cai et al. consider the same spectral compressed sensing problem from linear compressive measurements and study Hankel matrix nuclear norm minimization, which serves a convex relaxation of problem \eqref{eq:model0}. They show that, as long as the number of Gaussian measurements exceeds $\cO\sbra{r \ln^2 N}$, the spectrally sparse signal can be exactly recovered with high probability. Note that a common drawback of nuclear norm minimization is its high computational complexity, typically of order $\cO\sbra{N^3}$ or higher for solving the equivalent semidefinite program (SDP). In contrast, our algorithm directly solves problem \eqref{eq:model0} with a significantly lower computational complexity that scales nearly linearly with $N$. Meanwhile, our sample complexity required for exact recovery is $\cO\sbra{r \ln r \ln^2 N}$, which matches the nuclear norm approach up to a log factor $\ln r$.

Heckel and Soltanolkotabi \cite{heckel2017generalized} show that $\cO(r \ln N)$ Gaussian measurements are sufficient to exactly recover spectrally sparse signals via atomic norm minimization (ANM) \cite{tang2013compressed,candes2014towards}, provided that the minimum wrap-around separation between frequencies is at least $4/N$. In contrast, our algorithm does not require any frequency separation for recovery guarantees and is significantly more computationally efficient than ANM.

Spectrally sparse signal recovery from partial entries is another instance of spectral compressed sensing. For this problem, nuclear norm minimization for low-rank Hankel matrix completion problem is proposed in \cite{chen2014robust}. Two fast algorithms, FIHT and projected gradient descent, are proposed in \cite{cai2019fast} and \cite{cai2018spectral}, respectively, to solve the fixed-rank Hankel completion problem. It is shown in \cite{cai2019fast,cai2018spectral} that the true signal can be exactly recovered under a sample complexity of order $r^2$ (up to log factors in $N$) and certain incoherence conditions. In contrast, we consider signal recovery from linear compressive measurements. Our theoretical guarantees rely on the LRIP, requiring only a sample complexity of order $r \ln r$, without any incoherence conditions. 

\subsection{Notation}
We denote by $\bR$ and $\bC$ the sets of real and complex numbers. Let $\mathbb{Z}$ denote the set of integers and $\mathbb{N}=\{1,2,\ldots\}$ the set of positive integers. Bold lowercase letters, bold uppercase letters, and calligraphic letters are reserved for vectors, matrices, and operators, respectively. 
For a vector $\m{x}$, $\twon{\m{x}}$ and $\diag\sbra{\m{x}}$ denote its $\ell_2$-norm and the diagonal matrix with $\m{x}$ on the main diagonal, respectively. 
For a matrix $\m{X}$, $\twon{\m{X}}$, $\frobn{\m{X}}$, and $\norm{\m{X}}_*$ denote its spectral norm, Frobenius norm, and nuclear norm, respectively. For an operator $\cA$, we define the operator norm as $\twon{\cA} := \sup_{\frobn{\m{X}}=1} \frobn{\cA\m{X}}$.
The identity operator is denoted by $\mathcal{I}$. 
We denote the adjoint of the Hankel operator $\cH$ by $\cH^*$, which maps a matrix $\m{X} \in \bC^{n_1 \times n_2}$ to a vector $\cH^*\m{X} = ( \sum_{i+j=a} X_{ij} )^{N-1}_{a=0}$. 
We use $\overline{(\cdot)}$, $(\cdot)^T$, and $(\cdot)^H$ to denote the complex conjugate, transpose, and conjugate transpose, respectively. 
Finally, $\mathcal{CN}(0,1)$ denotes the standard complex Gaussian distribution.

\section{Problem Formulation and LRIP} \label{sec:LRIP}
As seen from the Hankel matrix $\cH\m{x}$ in \eqref{eq:Hankel}, each entry of the signal $\m{x}$ appears a different number of times. This induces a non-uniform scaling of the signal energy, in the sense that $\frobn{\cH \m{x}} \neq \twon{\m{x}}$. Thus, $\cH$ is not an isometry, which complicates theoretical analyses.

To address this issue, we introduce a diagonal scaling operator
\equ{
	\cD := \diag \sbra{ \sqrt{w_0}, \ldots, \sqrt{w_{N-1}} },
}
where $w_a := \#\{(i,j)\mid i+j=a, \ i \in [0,n_1-1], \ j \in [ 0,n_2-1], \ i,j\in \bZ \}$ counts the multiplicity of each entry in the Hankel matrix. It follows that $\cD^2 = \cH^* \cH$.

We then define the scaled Hankel operator as
\equ{
	\cG := \cH \cD^{-1},
}
so that $\cG^*\cG = \cD^{-1} \cH^* \cH \cD^{-1} = \cI$.
That is, $\cG$ is an isometry and satisfies $\frobn{\cG \m{z}} = \twon{\m{z}}$.

Let $\m{z} := \cD\m{x}$ and $\m{B} := \m{A} \cD^{-1}$. We have $\m{B}\m{z} = \m{A}\m{x}$ and $\cG\m{z}=\cH\m{x}$. Therefore, problem \eqref{eq:model0} is equivalently reformulated as
\equ{
	\min_{\m{z}} \|\m{B}\m{z} - \m{y}\|_2^2, 
	\quad \st \rank\big(\cG \m{z}\big) = r. 
	\label{eq:model}
}
In addition, we denote $\m{z}_\star := \cD\m{x}_\star$. Then the measurement model can be rewritten as
\[
\m{y} = \m{A} \m{x}_\star = \m{B} \m{z}_\star.
\]

To establish recovery guarantees for \eqref{eq:model}, we rely on the RIP for the measurement matrix $\m{B}$, which is a fundamental concept in compressed sensing and low-rank matrix recovery \cite{foucart2013mathematical}.
In classical compressed sensing, the RIP requires the measurement matrix to approximately preserve the Euclidean norm of sparse vectors \cite{candes2005decoding}. 
This idea was later extended to low-rank matrix recovery via the matrix RIP, where the sensing operator is required to act nearly isometrically on the set of low-rank matrices \cite{candes2011tight}.

Establishing an analogous RIP for spectrally sparse signal recovery is substantially more challenging.
Unlike standard sparse vectors, spectrally sparse signals are parameterized by continuously-valued frequencies and therefore generally do not admit sparse representations under a fixed discrete dictionary that is universally valid for all such signals.

A related framework is the dictionary RIP (D-RIP) introduced in \cite{candes2011compressed}.
For signals sparse in a fixed overcomplete dictionary $\m D$, the D-RIP requires the measurement matrix to act nearly isometrically on the set $\lbra{ \m{D}\m{s}: \norm{\m{s}}_0 \le r }$, namely, the image of sparse vectors under $\m{D}$. However, this framework fundamentally relies on the existence of a fixed dictionary $\m D$ that is specified independently of the unknown signal.
In contrast, for spectrally sparse signals, the effective dictionary depends on the unknown continuously-valued frequencies and therefore varies with the signal itself.
Consequently, the D-RIP framework does not directly yield a universal recovery guarantee for spectrally sparse signals.

Recall that a spectrally sparse signal can be lifted into a low-rank Hankel matrix. 
This naturally motivates us to develop an RIP using the Hankel operator.
Specifically, for the recovery model in \eqref{eq:model}, stable recovery requires that the measurement matrix $\m B$ approximately preserves distances between feasible solutions, namely, $\twon{\m{B}\m{z}_1 - \m{B}\m{z}_2}^2 \approx \twon{ \m{z}_1 - \m{z}_2 }^2$, where $\m{z}_1$ and $\m{z}_2$ satisfy $\rank\big(\cG \m{z}_1\big) = \rank\big(\cG \m{z}_2\big) = r$. Using the identity $\cG^*\cG=\cI$, we have
\equ{
	\twon{\m{B}\m{z}_1 - \m{B}\m{z}_2}^2 
	=
	\twon{\m{B}\cG^* \sbra{ \cG\m{z}_1 - \cG\m{z}_2} }^2
	\quad \text{and} \quad
	\twon{ \m{z}_1 - \m{z}_2 }^2 = \twon{\cG^* \sbra{ \cG\m{z}_1 - \cG\m{z}_2} }^2.
}
Therefore, it suffices to establish
\equ{
	\twon{\m{B}\cG^* \sbra{ \cG\m{z}_1 - \cG\m{z}_2} }^2 
	\approx  
	\twon{\cG^* \sbra{ \cG\m{z}_1 - \cG\m{z}_2 } }^2.
}
Observe that $\cG\m{z}_1 - \cG\m{z}_2$ is both Hankel and low-rank.
Nevertheless, formulating an RIP over the set of low-rank Hankel matrices is overly restrictive, since intermediate quantities arising in recovery and convergence analysis generally preserve low-rankness but not exact Hankel structure.
Therefore, we instead impose the RIP over the broader class of low-rank matrices, which provides a more robust framework for stability and convergence analysis.

This motivates the following LRIP, where the rank constraint is imposed in the lifted matrix domain, whereas the measurements are taken after mapping these low-rank matrices back to the signal domain through $\cG^*$.

\begin{definition}[Lifted Restricted Isometry Property (LRIP)] \label{def:LRIP}
	For an integer $r\ge1$, define $\delta_r$ as the smallest nonnegative number such that
	\equ{
		\abs{ \twon{\m{B}\cG^*\m{Z}}^2-\twon{\cG^*\m{Z}}^2 }
		\le \delta_r\frobn{\m{Z}}^2
		\label{eq:LRIP_matrix}
	}
	for every $\m{Z}\in\bC^{n_1\times n_2}$ with $\rank(\m{Z})\le r$.
	If $\delta_r<1$, we say that $\m{B}$ satisfies LRIP at rank $r$.
	Here $\cG^*$ is the adjoint of the scaled Hankel operator and $N=n_1+n_2-1$.
\end{definition}

Since $\cG\cG^*$ is an orthogonal projector and $\frobn{\cG\cG^*\m{Z}}=\twon{\cG^*\m{Z}}$, we have $\frobn{\m{Z}}^2
=
\twon{\cG^*\m{Z}}^2
+
\frobn{(\cI-\cG\cG^*)\m{Z}}^2$. Therefore, \eqref{eq:LRIP_matrix} is equivalent to
\equ{
		(1-\delta_r)\frobn{\m{Z}}^2
		\le \twon{\m{B}\cG^*\m{Z}}^2
		+\frobn{(\cI-\cG\cG^*)\m{Z}}^2 \le (1+\delta_r)\frobn{\m{Z}}^2.
	\label{eq:LRIP_completed}
}
This formulation highlights the distinction from the standard low-rank RIP. The measurement matrix $\m{B}$ only acts on the signal-domain component $\cG^*\m{Z}$, while the component of $\m{Z}$ orthogonal to the Hankel subspace is not measured and must be accounted for separately.

In the following theorem, we show that if $\m{B}$ is composed of i.i.d.\ complex Gaussian entries, then $\m{B}$ satisfies the LRIP with high probability provided that the number of measurements is sufficiently large.

\begin{theorem} \label{thm:LRIP}
	Let $0 < \delta < 1$, $0 < \varepsilon < 1$, $N>1$, and $n_1,n_2 \in \bN$ satisfy $n_1+n_2-1 = N$ and $n_2 \ge n_1$. Denote $c_s := n_2/n_1$. Assume $\m{B}\in \bC^{m\times N}$ have entries that are i.i.d.\ $\cC\cN\sbra{0,1/m}$. Then, $\m{B}$ satisfies the LRIP with $\delta_r \le \delta$ with probability at least $1-\varepsilon$, provided
	\equ{
		m \ge C \delta^{-2} \mbra{ r \sbra{ c_s + \ln N } \ln N  + \ln \sbra{2\varepsilon^{-1}} }, \label{eq:m}
	}
	where $C>0$ is a universal constant. In particular, for $c_s = 1$, $\m{B}$ satisfies the LRIP with $\delta_r \le \delta$ with probability at least $1-N^{-\ln N}$, provided 
	\equ{
		m \ge C \delta^{-2} r \ln^2 N. 
	}
\end{theorem}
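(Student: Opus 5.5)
The plan is a covering argument of Candès--Plan type, combined with a bound on the complexity of the image set $\cG^*(\text{rank-}r)$. Write $\cS_r := \{\m{Z}\in\bC^{n_1\times n_2}: \rank(\m{Z})\le r,\ \frobn{\m{Z}}=1\}$ and $\m{v}=\cG^*\m{Z}$. Since $\cG^*$ is a contraction, $\twon{\m{v}}\le 1$.

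For each fixed $\m{v}$, the quantity $\twon{\m{B}\m{v}}^2$ is distributed as $\twon{\m{v}}^2\cdot\chi^2_{2m}/(2m)$, a sum of $m$ i.i.d.\ subexponential variables. Bernstein's inequality therefore gives
\[
\bP\sbra{\abs{\twon{\m{B}\m{v}}^2-\twon{\m{v}}^2}>t\twon{\m{v}}^2}\le 2e^{-cmt^2}, \qquad 0<t<1 .
\]
By homogeneity we only need this on $\{\cG^*\m{Z}:\m{Z}\in\cS_r\}$, and then (via $\frobn{\m{Z}}=1\ge\twon{\cG^*\m{Z}}$) we get \eqref{eq:LRIP_matrix}.

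A naive union over an $\epsilon$-net of $\cS_r$ has cardinality $(C/\epsilon)^{2r(n_1+n_2+1)}$. This would cost $m\gtrsim rN$, which is far too many, so the key step is to exploit $\cG^*$. Write $\m{Z}=\sum_{k\le r}\sigma_k\m{u}_k\m{w}_k^H$. Then
\[
(\cG^*\m{Z})_a=w_a^{-1/2}\sum_{k}\sigma_k\sum_{i+j=a}u_{k,i}\overline{w_{k,j}},
\]
so $\cG^*\m{Z}$ is a weighted sum of $r$ convolutions $\m{u}_k*\overline{\m{w}}_k$. I would therefore control the Gaussian width (or the Dudley entropy integral) of $T:=\cG^*(\cS_r)$ and use the generic chaining / Gordon-type bound in the form of Dirksen's or Mendelson's RIP theorem for subgaussian matrices:
\[
\sup_{\m{v}\in T}\abs{\twon{\m{B}\m{v}}^2-\twon{\m{v}}^2}\lesssim \frac{w(T)^2}{m}+\frac{w(T)}{\sqrt m}+\sqrt{\frac{\ln(2/\varepsilon)}{m}}
\]
with probability at least $1-\varepsilon$. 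This reduces the theorem to showing $w(T)^2\lesssim r(c_s+\ln N)\ln N$.

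To bound $w(T)=\bE\sup_{\m{Z}\in\cS_r}\mathrm{Re}\inp{\cG\m{g},\m{Z}}$, use the nuclear--spectral duality:
\[
w(T)\le\sqrt r\,\bE\twon{\cG\m{g}}=\sqrt r\,\bE\twon{\cH\cD^{-1}\m{g}}.
\]
Here $\cH\cD^{-1}\m{g}=\sum_a g_a w_a^{-1/2}\m{E}_a$ is a Gaussian series whose coefficient matrices $\m{E}_a$ are the 0/1 anti-diagonal matrices. Apply the matrix Khintchine / noncommutative Gaussian series bound
\[
\bE\twon{\textstyle\sum g_a\m{M}_a}\lesssim \sigma\sqrt{\ln N}, \qquad \sigma^2=\max\lbra{\twon{\textstyle\sum\m{M}_a\m{M}_a^H},\twon{\textstyle\sum\m{M}_a^H\m{M}_a}}.
\]
Both $\sum_a w_a^{-1}\m{E}_a\m{E}_a^H$ and $\sum_a w_a^{-1}\m{E}_a^H\m{E}_a$ are diagonal. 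Their entries are $\sum_{j} 1/w_{i+j}$, a harmonic-type sum. For $n_1\le n_2$ the weights are $w_a=\min(a+1,n_1,N-a)$, and the row sums are bounded by roughly $\ln n_1 + n_2/n_1\lesssim c_s+\ln N$, with column sums of the same order or smaller. This gives $\sigma^2\lesssim c_s+\ln N$ and hence $w(T)^2\lesssim r(c_s+\ln N)\ln N$. Substituting into the chaining bound yields \eqref{eq:m}. For $c_s=1$, set $\varepsilon=N^{-\ln N}$, so that $\ln(2/\varepsilon)\lesssim\ln^2N$ and the stated form follows.

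The main obstacle is the chaining step. Standard results such as Dirksen's give concentration of $\sup_{\m{v}\in T}\abs{\twon{\m{B}\m{v}}^2-\twon{\m{v}}^2}$ in terms of the $\gamma_2$ functional of $T$ and its radius. One must check that the Gaussian width bound (which controls $\gamma_2$ by Talagrand's majorizing measure theorem) carries over for complex Gaussian $\m{B}$; I would handle this by realification, identifying $\bC^N$ with $\bR^{2N}$. Any residual failure would push me to a direct $\epsilon$-net argument over the spectral-norm dual ball instead. The harmonic-sum computation for $\sigma^2$ is routine, but the constant $c_s$ must be tracked carefully.
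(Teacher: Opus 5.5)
Your proposal is correct and follows essentially the same route as the paper. You bound the Gaussian width of $\cG^*(\text{rank-}r\text{ unit-Frobenius matrices})$ by $\sqrt{r}\,\bE\twon{\cG\m{g}}$ via nuclear/spectral duality, control $\bE\twon{\cG\m{g}}$ with the matrix Gaussian series bound (variance statistic at most $\ln n_1+n_2/n_1$), and insert this, through Talagrand's majorizing measures, into a generic-chaining deviation inequality; the paper does the same, except that it writes $\m{B}\m{z}=\m{V}_{\m{z}}\m{\xi}$ with $\m{V}_{\m{z}}=m^{-1/2}\m{I}_m\otimes\m{z}^T$ and invokes the Krahmer--Mendelson--Rauhut chaos theorem directly rather than its packaged Dirksen/Mendelson form, and it applies that theorem to complex Gaussians without the realification step you flag.
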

\begin{proof}
	The proof is deferred to Section \ref{subsec:proof_LRIP}. 
\end{proof}

\section{Algorithm} \label{sec:alg}
To efficiently solve \eqref{eq:model}, we develop a FIHT algorithm in Section \ref{subsec:FIHT} and introduce two initialization schemes in Section \ref{subsec:init}.

\subsection{FIHT Algorithm} \label{subsec:FIHT}
Problem \eqref{eq:model} is nonconvex due to the rank constraint, which necessitates the development of efficient algorithms with provable performance guarantees. Inspired by the iterative hard thresholding (IHT) algorithms for compressed sensing \cite{blumensath2009iterative} and Riemannian optimization methods for low-rank matrix recovery \cite{vandereycken2013low,wei2016guarantees,wei2020guarantees}, Cai et al. \cite{cai2019fast} propose a provable FIHT algorithm for low-rank Hankel matrix completion. FIHT is computationally efficient because by using subspace projections, it performs an singular value decomposition (SVD) on a much smaller matrix instead of the full Hankel matrix. 
Motivated by \cite{cai2019fast}, we develop an FIHT algorithm tailored to our problem \eqref{eq:model}, as shown in Algorithm \ref{alg:FIHT}. 

Our FIHT algorithm alternates between the vector space and the matrix space. Specifically, at each iteration, FIHT first updates the current estimate $\m{z}_t$ along the negative gradient direction $-\m{g}_t$. The resulting vector is then lifted to a Hankel matrix $\cG \sbra{ \m{z}_t - \m{g}_t }$ via the scaled Hankel operator $\cG$. Next, this Hankel matrix is projected onto the tangent space $T_t$ of the rank-$r$ matrix manifold at the previous iterate $\m{L}_t$ via the projection operator $\cP_{T_t}$, yielding an intermediate matrix $\m{W}_t$. The next iterate $\m{L}_{t+1}$ is obtained by applying the hard thresholding operator $\cT_r$ to $\m{W}_t$, which computes its best rank-$r$ approximation via truncated SVD. Finally,  the new estimate $\m{z}_{t+1}$ is obtained by applying the adjoint operator $\cG^*$ to $\m{L}_{t+1}$.

\begin{algorithm}
	\caption{Fast Iterative Hard Thresholding (FIHT) for solving \eqref{eq:model}}
	\label{alg:FIHT}
	\begin{algorithmic}
		\REQUIRE Observations $\m{y}$, measurement matrix $\m{B}$, and sparsity level $r$ \\
		Initialize a rank-$r$ matrix $\m{L}_0$ and set $\m{z}_0=\cG^*\m{L}_0$
		\FOR{$t = 0,1,...$}
		\STATE $\m{g}_t = \m{B}^H\sbra{\m{B}\m{z}_t - \m{y}}$ \
		\STATE $\m{W}_t = \cP_{T_t} \cG \sbra{ \m{z}_t -  \m{g}_t }$ \
		\STATE $\m{L}_{t+1} = \cT_r\sbra{\m{W}_t}$ \
		\STATE $\m{z}_{t+1} = \cG^* \m{L}_{t+1}$
		\ENDFOR
		\ENSURE $\m{z}_{t+1}$
	\end{algorithmic}
\end{algorithm}

FIHT is both computationally and memory efficient. Specifically, the Hankel matrix-matrix multiplications and the adjoint Hankel operations involved in each iteration can be efficiently implemented via the fast Fourier transform (FFT), requiring only $\cO\sbra{r N \log N}$ flops \cite{cai2019fast}. Moreover, the truncated SVD of the $n_1 \times n_2$ matrix $\m{W}_t$ can be computed from the SVD of a much smaller $2r\times 2r$ matrix, requiring only $\cO\sbra{r^3}$ flops. Consequently, the overall per-iteration computational complexity of FIHT is $\cO\sbra{ m N + r^2 N + r N \log N + r^3 }$, where the $mN$ and $r^2 N$ terms arise from the measurement matrix-vector and matrix-matrix multiplications, respectively. By substituting the theoretical sample complexity lower bound $m=\cO\sbra{r \ln^2 N}$ in Theorem \ref{thm:LRIP} and noting that $r \ll N$, the complexity becomes $\cO\sbra{ r N \log^2 N + r^2 N }$, which scales nearly linearly with $N$.
In terms of memory requirements, except for the $m \times N$ measurement matrix $\m{B}$, the other largest matrices that need to be stored are the singular vector matrices of $\m{W}_t$, which require only $\cO\sbra{rN}$ memory. Implementation details of FIHT are deferred to Appendix \ref{append:FIHT}.

\subsection{Initialization} \label{subsec:init}
As is common in nonconvex optimization, the recovery performance of FIHT is dependent on the initialization. We next describe two initialization schemes.

\subsubsection{Initialization via One-Step Hard Thresholding}
The initial estimate $\m{L}_0$ is obtained by truncating the Hankel matrix constructed from $\m{B}^H \m{y}$. Formally,
\equ{
	\m{L}_0 = \cT_r\sbra{ \cG \m{B}^H \m{y} }.
} 
This initialization is easy to implement; however, it typically yields only a suboptimal sample complexity for FIHT, which scales quadratically with the sparsity level $r$, as we will show in Theorem \ref{thm:guarantee1}.

\subsubsection{Initialization via Resampling}
To improve the sample complexity bound, we propose a resampled FIHT for initialization in Algorithm \ref{alg:Init-resamp}. We set the initial estimate 
\equ{
	\m{L}_0=\widetilde{\m{L}}_T,
}
where $\widetilde{\m{L}}_T$ is the output of Algorithm \ref{alg:Init-resamp}.

For a prescribed integer $T\ge0$ with $m/(T+1)\in\bN$, Algorithm \ref{alg:Init-resamp} begins by partitioning the measurement index set $\lbra{1,\ldots,m}$ into $T+1$ disjoint subsets $\Omega_0, \ldots, \Omega_T$, each of size $\widetilde{m}$. Let $\m{B} := \mbra{\m{b}_1,\ldots,\m{b}_m}^H$. For each subset, define the rescaled measurement matrix and observations by
\equ{
	\m{B}_t := \sqrt{\frac{m}{\widetilde{m}}} \mbra{\m{b}_i^H}_{i\in \Omega_t}, \qquad \m{y}_t := \sqrt{\frac{m}{\widetilde{m}}} \mbra{y_i}_{i\in \Omega_t}.
}
It follows that $\m{y}_t = \m{B}_t \m{z}_\star$.
At iteration $t$, a refined estimate is computed by applying FIHT to the fresh measurements $(\m{B}_{t+1}, \m{y}_{t+1})$. This resampling step removes the statistical dependence between the current iterate $\widetilde{\m{L}}_t$ and the measurements used in the update, which enables a sharp spectral concentration bound (Lemma \ref{lem:KeySpectralNormBound}). As a result, the sample complexity is near-optimal in $r$ up to a log factor; see Theorem \ref{thm:guarantee2}.

\begin{algorithm}
	\caption{Initialization via Resampled FIHT}
	\label{alg:Init-resamp}
	\begin{algorithmic}
		\REQUIRE Observations $\m{y}$, measurement matrix $\m{B}$, and sparsity level $r$ \\
		Partition $\lbra{1,\ldots,m}$ into $T+1$ disjoint subsets $\Omega_0, \ldots, \Omega_T$ of equal size $\widetilde{m}$ \\
		Initialize $\widetilde{\m{L}}_0 = \cT_r\sbra{ \cG \m{B}_0^H \m{y}_0 }$ and set $\widetilde{\m{z}}_0 = \cG^* \widetilde{\m{L}}_0$
		\FOR{$t = 0,1,...,T-1$}
			\STATE $\widetilde{\m{g}}_t = \m{B}_{t+1}^H\sbra{\m{B}_{t+1}\widetilde{\m{z}}_t - \m{y}_{t+1}}$ \
			\STATE $\widetilde{\m{W}}_t = \cP_{\widetilde{T}_t} \cG \sbra{ \widetilde{\m{z}}_t -  \widetilde{\m{g}}_t }$ \
			\STATE $\widetilde{\m{L}}_{t+1} = \cT_r\sbra{\widetilde{\m{W}}_t}$ \
			\STATE $\widetilde{\m{z}}_{t+1} = \cG^* \widetilde{\m{L}}_{t+1}$
		\ENDFOR
		\ENSURE $\widetilde{\m{L}}_T$
	\end{algorithmic}
\end{algorithm}

\begin{rem}
	The resampling scheme is inspired by \cite{cai2019fast}, but our purposes are different. In \cite{cai2019fast}, for the low-rank Hankel completion problem, resampling is used to remove the dependence of the sample complexity on $\sqrt{N}$, which does not arise in our problem. In contrast, our resampling aims to reduce the dependence on $r^2$ to $r$ in the sample complexity. As a result, our subsequent theoretical analysis differs significantly from that in \cite{cai2019fast}.
\end{rem}

\section{Recovery Guarantees} \label{sec:theory}
We first establish a recovery guarantee for FIHT with one-step hard thresholding initialization.

\begin{theorem} \label{thm:guarantee1}
	Assume $\m{B}\in \bC^{m\times N}$ satisfies the LRIP with constant $\delta_{4r} \le \frac{1}{136\sqrt{r}\kappa}$, where $\kappa$ denotes the condition number of $\cG\m{z}_\star$. Let the initialization be $\m{L}_0 = \cT_r\sbra{ \cG \m{B}^H \m{y} }$. Then the FIHT iterates  satisfy
	\equ{
		\twon{\m{z}_t - \m{z}_\star} \le v^t \frobn{\m{L}_0 - \cG\m{z}_\star},
	}
	for some constant $v<1$.
	
	Moreover, if $\m{B}$ has i.i.d.\ entries following $\cC\cN\sbra{0,1/m}$, then with probability at least $1 - N^{-\sbra{c_s + \ln N}}$, the condition $\delta_{4r} \le \frac{1}{136\sqrt{r}\kappa}$ holds provided that
	\equ{
		m \ge C \kappa^2 r^2 \max \lbra{c_s, \ln N} \ln N,
		\label{eq:m_guarantee1}
	}
	for some universal constant $C>0$.
\end{theorem}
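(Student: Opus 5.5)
The plan is the standard two-part argument for Riemannian IHT on low-rank matrices (in the spirit of \cite{wei2016guarantees,cai2019fast}), rewritten in terms of the LRIP of Definition~\ref{def:LRIP}. Write $\m{L}_\star:=\cG\m{z}_\star$, which has rank $r$ and singular values $\sigma_1\ge\cdots\ge\sigma_r$ with $\kappa=\sigma_1/\sigma_r$. First, from \eqref{eq:LRIP_matrix} I will derive the polarized consequence
\[
\abs{\inp{\m{B}\cG^*\m{Z}_1,\m{B}\cG^*\m{Z}_2}-\inp{\cG^*\m{Z}_1,\cG^*\m{Z}_2}}\le \delta_{r_1+r_2}\frobn{\m{Z}_1}\frobn{\m{Z}_2}
\]
for matrices of ranks $r_1,r_2$; this holds because sums of such matrices have rank at most $r_1+r_2$. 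In operator form it says $\twon{\cP_{U}\cG(\m{B}^H\m{B}-\cI)\cG^*\cP_{V}}\le\delta_{r_U+r_V}$ whenever $\cP_U,\cP_V$ project onto sets of matrices of bounded rank. Tangent-space matrices have rank at most $2r$, and differences with $\m{L}_\star$ add rank $r$, so $\delta_{4r}$ suffices everywhere.

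\textbf{Iteration contraction.} Since $\m{y}=\m{B}\m{z}_\star$ and $\m{z}_t=\cG^*\m{L}_t$, we have $\m{z}_t-\m{g}_t-\m{z}_\star=(\cI-\m{B}^H\m{B})\cG^*(\m{L}_t-\m{L}_\star)$. I will split
\[
\m{W}_t-\m{L}_\star=(\cP_{T_t}-\cI)\m{L}_\star+\cP_{T_t}\cG(\cI-\m{B}^H\m{B})\cG^*(\m{L}_t-\m{L}_\star)+\cP_{T_t}(\cG\cG^*-\cI)(\m{L}_t-\m{L}_\star)+\cP_{T_t}(\cG\cG^*-\cI)\m{L}_\star.
\]
The last term vanishes because $\m{L}_\star$ is Hankel. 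The third term is the delicate one, since $\m{L}_t$ is not Hankel. I will regroup it with the second term to form $\cP_{T_t}\cG(\cI-\m{B}^H\m{B})\cG^*(\m{L}_t-\m{L}_\star)$ plus the tangent-space part, and use the equivalent form \eqref{eq:LRIP_completed}, which is exactly the statement that $\cG\m{B}^H\m{B}\cG^*+(\cI-\cG\cG^*)$ is a near-isometry on low-rank matrices. This gives a bound $\lesssim\delta_{4r}\frobn{\m{L}_t-\m{L}_\star}$ for that combined term, so the non-Hankel component is measured by the combined operator and is not left uncontrolled. The first term is the standard curvature bound $\frobn{(\cI-\cP_{T_t})\m{L}_\star}\le\frobn{\m{L}_t-\m{L}_\star}^2/\sigma_r$. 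Finally, $\frobn{\cT_r(\m{W}_t)-\m{L}_\star}\le2\frobn{\m{W}_t-\m{L}_\star}$, and $\twon{\m{z}_{t+1}-\m{z}_\star}=\twon{\cG^*(\m{L}_{t+1}-\m{L}_\star)}\le\frobn{\m{L}_{t+1}-\m{L}_\star}$. Combining these, if $\frobn{\m{L}_t-\m{L}_\star}\le c\,\sigma_r$ with $c$ small, then $\frobn{\m{L}_{t+1}-\m{L}_\star}\le v\frobn{\m{L}_t-\m{L}_\star}$ with $v<1$, and the neighborhood is preserved by induction.

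\textbf{Initialization, the main obstacle.} I need $\frobn{\m{L}_0-\m{L}_\star}\le c\sigma_r$. I write $\cG\m{B}^H\m{y}-\m{L}_\star=\cG(\m{B}^H\m{B}-\cI)\cG^*\m{L}_\star$. By the polarized LRIP with rank-one test matrices $\m{u}\m{v}^H$, its spectral norm is at most $\delta_{2r}\frobn{\m{L}_\star}\le\delta_{2r}\sqrt r\,\sigma_1$. Rank-$2r$ approximation then gives
\[
\frobn{\m{L}_0-\m{L}_\star}\le\sqrt{2r}\,\twon{\m{L}_0-\m{L}_\star}\le2\sqrt{2r}\,\delta_{2r}\sqrt r\,\sigma_1 .
\]
This is where the loss appears, and I expect the constants to be tight. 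A $\delta\sim1/(\sqrt r\kappa)$ bound seems to require a sharper estimate: bounding $\frobn{\cP_{\text{rank }2r}(\cG(\m{B}^H\m{B}-\cI)\cG^*\m{L}_\star)}$ directly, by testing against a rank-$2r$ $\m{Z}$ with $\frobn{\m{Z}}=1$, gives $\le\delta_{3r}\frobn{\m{L}_\star}\le\delta_{3r}\sqrt r\sigma_1$. Requiring this to be $\le c\sigma_r$ yields exactly $\delta_{4r}\lesssim1/(\sqrt r\kappa)$, with $136$ coming from tracking the constants.

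\textbf{Probabilistic part.} Apply Theorem~\ref{thm:LRIP} at rank $4r$ with $\delta=1/(136\sqrt r\kappa)$ and $\varepsilon=2N^{-(c_s+\ln N)}$. Then $\ln(2\varepsilon^{-1})\lesssim(c_s+\ln N)\ln N$, so $m\gtrsim\kappa^2 r\cdot r(c_s+\ln N)\ln N$, which is \eqref{eq:m_guarantee1} since $c_s+\ln N\le2\max\{c_s,\ln N\}$.
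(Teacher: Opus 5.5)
Your overall architecture is the paper's: polarized LRIP, local contraction via the curvature bound of Lemma~\ref{lem:wei}, an initialization error of order $\delta\frobn{\m{L}_\star}$, then Theorem~\ref{thm:LRIP} at rank $4r$. However, the initialization step, which you yourself flag as the main obstacle, is not actually closed.

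The only bound you fully derive is $\frobn{\m{L}_0-\m{L}_\star}\le 2\sqrt{2}\,r\,\delta_{2r}\sigma_1$. This forces $\delta\lesssim 1/(r\kappa)$ and hence $m\gtrsim\kappa^2r^3\cdots$, which is not the theorem. Your ``sharper estimate'' bounds $\sup_{\rank(\m{Z})\le 2r,\ \frobn{\m{Z}}=1}\abs{\inp{\m{E},\m{Z}}}\le\delta_{3r}\frobn{\m{L}_\star}$ for $\m{E}:=\cG(\m{B}^H\m{B}-\cI)\cG^*\m{L}_\star$. But you never show why this quantity controls $\frobn{\cT_r(\m{L}_\star+\m{E})-\m{L}_\star}$. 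Since $\m{E}$ is full rank and $\frobn{\m{E}}$ can be large, this bridge is exactly what needs proof.

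It follows from the best-approximation property of $\cT_r$. The inequality $\frobn{\m{L}_0-\m{L}_\star-\m{E}}\le\frobn{\m{E}}$ expands to $\frobn{\m{L}_0-\m{L}_\star}^2\le 2\Re\inp{\m{L}_0-\m{L}_\star,\m{E}}$. Because $\rank(\m{L}_0-\m{L}_\star)\le 2r$, the right side is at most $2\frobn{\m{L}_0-\m{L}_\star}\,\delta_{3r}\frobn{\m{L}_\star}$. This gives $\frobn{\m{L}_0-\m{L}_\star}\le 2\delta_{3r}\sqrt r\,\sigma_1$, which is what you need.

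The paper's Lemma~\ref{lem:init1} does the equivalent thing by projecting onto the joint column space $\m{Q}$ of $\m{L}_0$ and $\m{L}_\star$ (Pythagoras plus Eckart--Young). It obtains $\delta_{2r}$ rather than $\delta_{3r}$ through Lemma~\ref{lem:Proj_RIP}, because $\m{L}_\star$ lies in the same rank-$2r$ subspace. Either version suffices, since $\delta_{3r}\le\delta_{4r}$. Also, the constant $136$ does not need to be ``tracked'': with contraction radius $\sigma_r/4$, any $\delta_{4r}\le 1/(8\sqrt r\kappa)$ works.

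There are two smaller issues.

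First, your four-term decomposition of $\m{W}_t-\m{L}_\star$ is incorrect as written. Your own identity $\m{z}_t-\m{g}_t-\m{z}_\star=(\cI-\m{B}^H\m{B})\cG^*(\m{L}_t-\m{L}_\star)$ gives exactly $\m{W}_t-\m{L}_\star=(\cP_{T_t}-\cI)\m{L}_\star+\cP_{T_t}\cG(\cI-\m{B}^H\m{B})\cG^*(\m{L}_t-\m{L}_\star)$. The extra term $\cP_{T_t}(\cG\cG^*-\cI)(\m{L}_t-\m{L}_\star)=\cP_{T_t}(\cG\cG^*-\cI)\m{L}_t$ is nonzero whenever $\m{L}_t$ is not Hankel, and it should not appear. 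There is therefore no ``delicate'' non-Hankel term. The second term is bounded by $\delta_{4r}\frobn{\m{L}_t-\m{L}_\star}$ directly from your polarized LRIP, as in Lemma~\ref{lem:I-PT}.

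Second, $\varepsilon=2N^{-(c_s+\ln N)}$ only yields probability $1-2N^{-(c_s+\ln N)}$. Take $\varepsilon=N^{-(c_s+\ln N)}$ and absorb the extra $\ln 2$ into $C$.
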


\begin{proof}
	The proof is deferred to Section \ref{sec:recovery1}.
\end{proof}

We next provide a refined guarantee under resampling-based initialization.

\begin{theorem} \label{thm:guarantee2}
	Assume $r\ge 2$ and $\m{B} \in \bC^{m\times N}$ has i.i.d.\ entries following $\cC\cN\sbra{0,1/m}$, and let $T = \lceil \ln r \rceil - 1$ with $m/(T+1)\in \bN$. Let the initialization be $\m{L}_0 = \widetilde{\m{L}}_T$. Then, with probability at least $1 - \sbra{3+\ln r} N^{-\sbra{c_s + \ln N}}$, the FIHT iterates satisfy
	\equ{
		\twon{\m{z}_t - \m{z}_\star} \le v^t \frobn{\m{L}_0 - \cG\m{z}_\star}, \label{eq:zz}
	}
	for some constant $v < 1$, provided that
	\equ{
		m \ge C \kappa^2 r \ln r \max\lbra{ c_s, \ln N } \ln N,
		\label{eq:m_guarantee2}
	}
	where $C>0$ is a universal constant. For $r=1$, the same convergence and probability bounds hold with $T=0$ provided that $m\ge C\kappa^2\max\{c_s,\ln N\}\ln N$.
\end{theorem}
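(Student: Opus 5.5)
The proof splits into two phases: the resampled initialization (Algorithm \ref{alg:Init-resamp}) and the main FIHT iterations on the full matrix $\m{B}$. The target is a basin of attraction $\frobn{\m{L}_0-\cG\m{z}_\star}\le c\,\sigma_r(\cG\m{z}_\star)$ for a small constant $c$. The main phase has to start from this basin. The difference from Theorem \ref{thm:guarantee1} is that we no longer want the crude bound $\delta_{4r}\lesssim 1/(\sqrt r\kappa)$, which costs $r^2$ samples. Instead the main phase should only need $\delta_{4r}\le c'/\kappa$ together with the basin condition. By Theorem \ref{thm:LRIP} (with $\varepsilon=N^{-(c_s+\ln N)}$), this holds for the full $\m{B}$ once $m\gtrsim \kappa^2 r\max\{c_s,\ln N\}\ln N$.

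For the main-phase contraction I would follow the standard Riemannian IHT argument, as in the proof of Theorem \ref{thm:guarantee1}. Write $\m{L}_\star=\cG\m{z}_\star$. Then
\[
\m{W}_t-\m{L}_\star=(\m{L}_t-\m{L}_\star)-\cP_{T_t}\cG(\cI-\m{B}^H\m{B})\cG^*(\m{L}_t-\m{L}_\star)-(\cI-\cP_{T_t})\m{L}_\star+\cP_{T_t}(\cI-\cG\cG^*)\m{L}_t .
\]
The term involving $\cG^*$ of a rank-$\le 2r$ difference is handled by LRIP through polarization: $\|\cP_{T}\cG(\cI-\m{B}^H\m{B})\cG^*\cP_{T'}\|\le\delta_{4r}$. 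The curvature term satisfies $\|(\cI-\cP_{T_t})\m{L}_\star\|_F\le\|\m{L}_t-\m{L}_\star\|_F^2/\sigma_r$, and it is the basin condition that makes it small. The non-Hankel component $(\cI-\cG\cG^*)\m{L}_t$ combines with the first term, because $\m{L}_\star$ is Hankel, so the identity part reduces to $\cP_{T_t}\cG\cG^*(\m{L}_t-\m{L}_\star)$ plus tangent-space errors, which are again controlled by the same curvature bound. Truncation at most doubles the error. Together with the bound $\|\m{z}_t-\m{z}_\star\|_2\le\|\m{L}_t-\m{L}_\star\|_F$ this gives $\|\m{L}_{t+1}-\m{L}_\star\|_F\le v\|\m{L}_t-\m{L}_\star\|_F$.

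For the initialization phase, set $\widetilde m=m/(T+1)\gtrsim\kappa^2 r\max\{c_s,\ln N\}\ln N$. Each $\m{B}_t$ is then i.i.d.\ $\cC\cN(0,1/\widetilde m)$ and independent of $\widetilde{\m{L}}_{t-1},\dots$. The key estimate is Lemma \ref{lem:KeySpectralNormBound}. For a fixed $\m{Z}$ independent of $\m{B}_{t}$, it should give a spectral-norm bound of the form $\|\cG(\cI-\m{B}_t^H\m{B}_t)\cG^*\m{Z}\|_2\lesssim\sqrt{\max\{c_s,\ln N\}\ln N/\widetilde m}\,\|\m{Z}\|_F$. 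The point is that spectral norm is used rather than Frobenius norm, so no $\sqrt r$ is lost. The initial step $\widetilde{\m{L}}_0=\cT_r(\cG\m{B}_0^H\m{y}_0)$ then gives
\[
\|\widetilde{\m{L}}_0-\m{L}_\star\|_F\le 2\sqrt{2r}\,\|\cG(\m{B}_0^H\m{B}_0-\cI)\m{z}_\star\|_2\lesssim\sqrt{r}\,\epsilon_0\,\|\m{L}_\star\|_F ,
\]
with $\epsilon_0$ small. This is not yet in the basin, but it is within a factor about $\sqrt r$ of it. Each resampled step applies the contraction above, with the LRIP term replaced by the spectral bound applied to the fixed error $\widetilde{\m{L}}_t-\m{L}_\star$; after rank-$2r$ truncation this yields contraction by a constant factor $\le e^{-1/2}$ relative to $\sqrt r$-scaled quantities. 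After $T=\lceil\ln r\rceil-1$ steps, the $\sqrt r$ excess is therefore removed and $\widetilde{\m{L}}_T$ lies in the basin. A union bound over the $T+1\le 1+\ln r$ subsamples, the LRIP event for the full $\m{B}$, and one extra event gives the stated probability $1-(3+\ln r)N^{-(c_s+\ln N)}$. Multiplying $\widetilde m$ by $T+1\approx\ln r$ gives \eqref{eq:m_guarantee2}. For $r=1$ the initial step is already in the basin, so $T=0$ suffices.

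The main obstacle is the resampled step. The curvature term and the truncation must not reintroduce a $\sqrt r$ factor. The spectral bound has to be converted into a Frobenius bound on a rank-$2r$ projected object, and this is done with a gain rather than a loss: the quantity tracked should be $\sqrt r\,\|\cdot\|_2$-type errors that decay geometrically. I would first attempt to track $\max\{\|\widetilde{\m{L}}_t-\m{L}_\star\|_F,\sqrt r\|\widetilde{\m{L}}_t-\m{L}_\star\|_2\}$ and show that it contracts by a factor $e^{-1/2}$ per step using Lemma \ref{lem:KeySpectralNormBound}.
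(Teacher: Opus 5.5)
Your proposal is correct in outline and follows essentially the paper's route: the paper also bounds $\twon{\widetilde{\m{L}}_0-\cG\m{z}_\star}\le\frac{1}{16}\sigma_{\text{min}}(\cG\m{z}_\star)$ and contracts the error on each fresh batch using Lemma~\ref{lem:KeySpectralNormBound}, the factor-$2$ truncation bound and the spectral-norm curvature bound. The only differences are that the paper tracks the spectral norm directly (it at least halves per step), rather than your equivalent $\max\{\frobn{\cdot},\sqrt r\twon{\cdot}\}$ (equivalent because the error has rank at most $2r$), then converts via $\frobn{\cdot}\le\sqrt{2r}\twon{\cdot}$ and applies Lemma~\ref{lem:local}, which needs only $\delta_{4r}<1/8$ rather than $c'/\kappa$. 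One correction: your displayed identity for $\m{W}_t-\m{L}_\star$ is wrong as written (signs and extra terms), and since $\cG\cG^*\m{L}_\star=\m{L}_\star$ it is exactly $\m{W}_t-\m{L}_\star=\cP_{T_t}\cG(\m{I}-\m{B}^H\m{B})\cG^*(\m{L}_t-\m{L}_\star)-(\cI-\cP_{T_t})\m{L}_\star$, with no separate non-Hankel term.
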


\begin{proof}
	The proof is deferred to Section \ref{sec:recovery2}.
\end{proof}

Both theorems establish exact recovery with linear convergence of FIHT under different initialization schemes. Comparing \eqref{eq:m_guarantee1} and \eqref{eq:m_guarantee2}, the sample complexity improves from $\mathcal{O}(r^2)$ to $\mathcal{O}(r \ln r)$ in terms of its dependence on $r$. This rate is near-optimal up to logarithmic factors, as an $r$-spectrally-sparse signal has $\mathcal{O}(r)$ intrinsic degrees of freedom.
The improvement stems from a sharper initialization error bound achieved via resampling, as shown by comparing \eqref{eq:L0gz} and \eqref{eq:L0gz2}.

\section{Numerical Results} \label{sec:numeirical}
In this section, we present numerical experiments to evaluate the performance of FIHT and compare it with Hankel nuclear norm minimization (Hankel-NNM) \cite{cai2016robust} and atomic norm minimization (ANM) \cite{heckel2017generalized}. Specifically, FIHT is terminated when the relative change satisfies $\norm{\m{z}_{t+1} - \m{z}_t }_2 / \norm{\m{z}_t}_2 \le 10^{-8}$ or when the maximum number of iterations $1000$ is reached.We use an adaptive step size to improve practical performance. For FIHT initialized by resampling in Algorithm \ref{alg:Init-resamp}, we choose $T = \min \left\{ s \ge \max\lbra{\lceil \ln r \rceil,1}-1 : \frac{m}{s+1} \in \mathbb{N} \right\} $.
For Hankel-NNM and ANM, we solve the corresponding optimization problems using the interior-point solver SDPT3 \cite{toh2012implementation} within the CVX toolbox \cite{cvx}. 

\begin{figure*}[htb] 
	\begin{center}
		\subfigure[FIHT (one-step initialization)]%
		{\includegraphics[width=.40\textwidth]{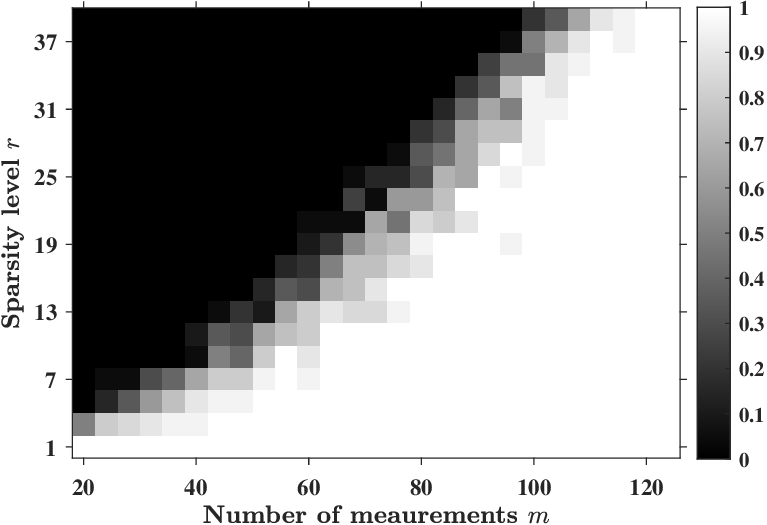}}
		\qquad
		\subfigure[FIHT (resampling initialization)]%
		{\includegraphics[width=.40\textwidth]{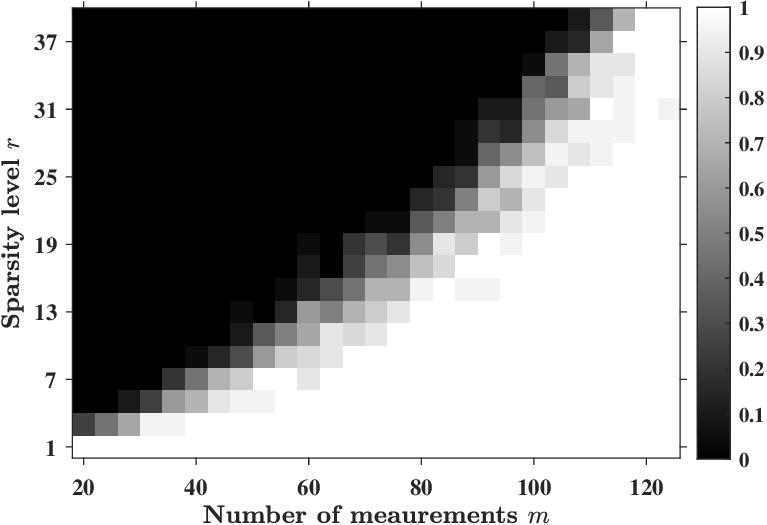}}
		\subfigure[Hankel-NNM \cite{cai2016robust}]%
		{\includegraphics[width=.40\textwidth]{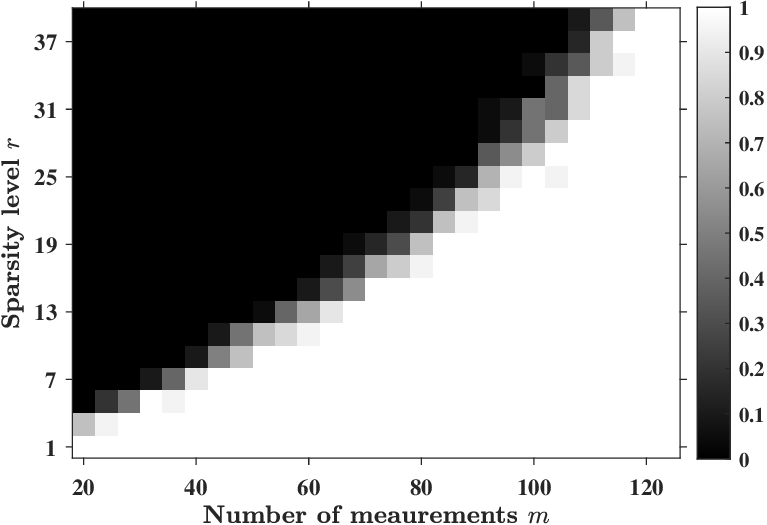}}
		\qquad
		\subfigure[ANM \cite{heckel2017generalized}]%
		{\includegraphics[width=.40\textwidth]{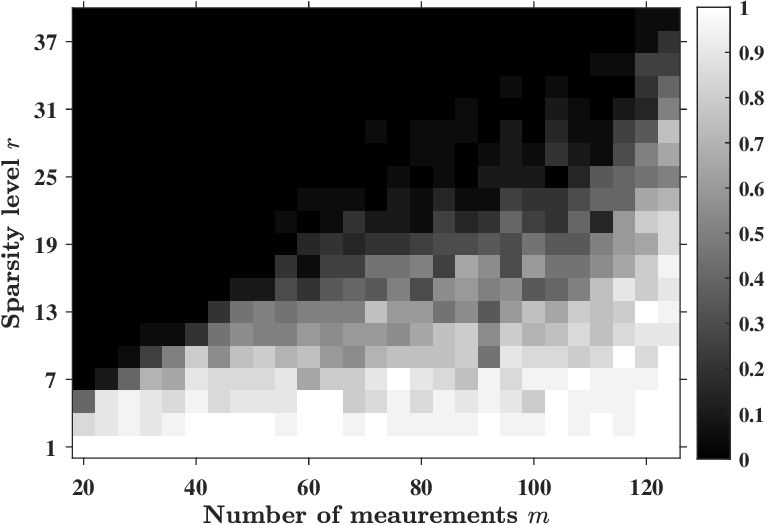}}
	\end{center}
	\caption{Phase transition comparisons. White means complete success and black means complete failure.} \label{fig:exp1}
\end{figure*}

\subsection{Phase Transition} \label{sec:phase}
We study the signal recovery performance of FIHT through phase transition experiments.
We first generate spectrally sparse signals $\m{x}_\star$ of length $N=127$ with $r$ frequency components. Each frequency $f_k$ is sampled uniformly from $[0,1)$, the damping factor $\tau_k$ is set to $0$, and the amplitude ${s_k}$ is generated as $s_k = \sbra{1+10^{0.5 m_k}}e^{i 2\pi \phi_k}$, where $m_k$ and $\phi_k$ are independently sampled from the uniform distribution on $[0,1]$. The Hankel matrix size is chosen as $n_1=n_2=(N+1)/2=64$.
We then obtain $m$ measurements $\m{y}$ of $\m{x}_\star$ via random Gaussian sampling, $\m{y}=\m{B}\cD\m{x}_\star$, where the entries of $\m{B}$ follow $\cC\cN(0,1/m)$ and $\cD$ is defined in Section~\ref{sec:LRIP}.
A signal is regarded as successfully recovered if the root mean squared error (RMSE) satisfies $\norm{\widehat{\m{x}} - \m{x}_\star }_2 / \norm{\m{x}_\star}_2 \le 10^{-3}$, where $\widehat{\m{x}}$ denotes the recovered signal. The success rate is estimated from $20$ Monte Carlo trials for each pair $(m,r)$, with $m\in\lbra{20,24,\ldots,124}$ and $r\in \lbra{1,3,\ldots,39}$.

The resulting phase transition is shown in Figure~\ref{fig:exp1}. We observe that FIHT exhibits comparable performance to Hankel-NNM under both one-step and resampling initialization schemes, while significantly outperforming ANM. The inferior performance of ANM is expected since it requires a minimum separation condition on the frequencies \cite{yang2024separation}.

\begin{figure*}[htb] 
	\centering
	\subfigure {\includegraphics[width=6.3cm]{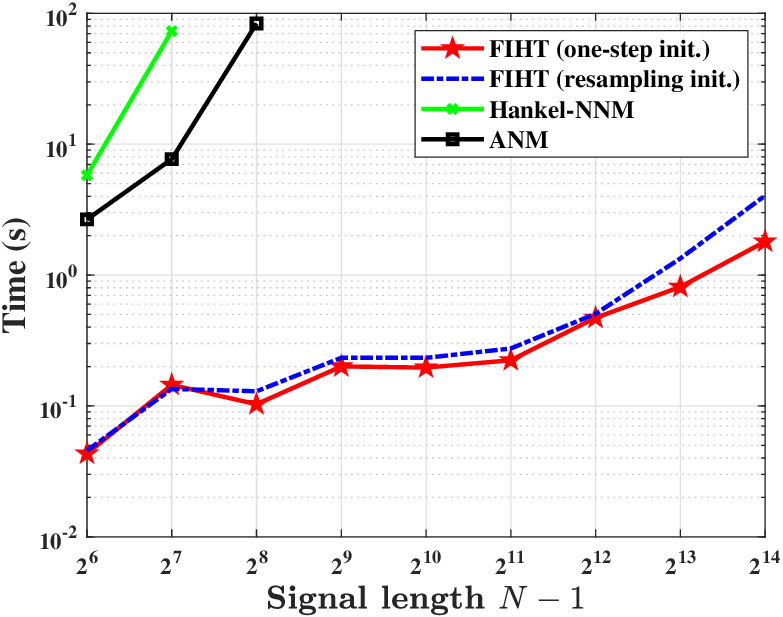} }
	\qquad
	\subfigure {\includegraphics[width=6.3cm]{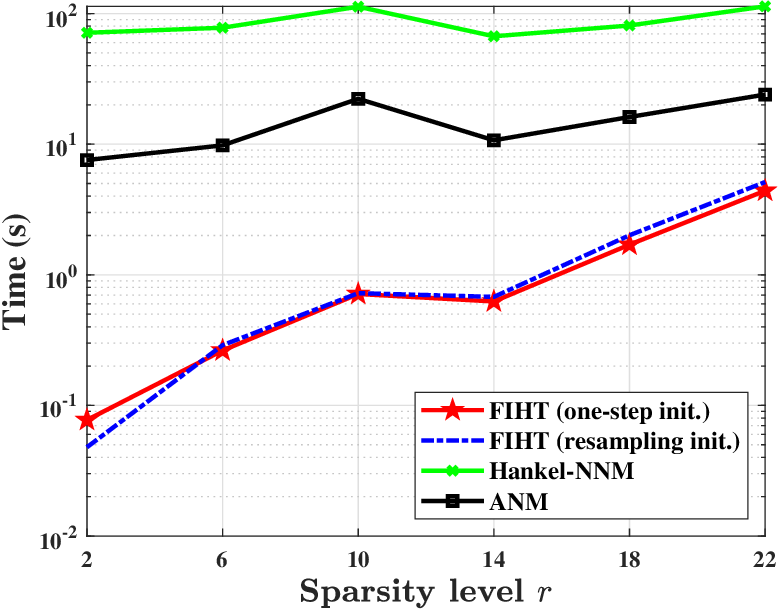}  }
	\caption{\emph{Left}: Computational time vs signal length $N$. \emph{Right}: Computational time vs sparsity level $r$.}
	\label{fig:time}
\end{figure*}

\begin{figure*}[htb] 
	\centering
	\subfigure[$m=80$]{\includegraphics[width=6.3cm]{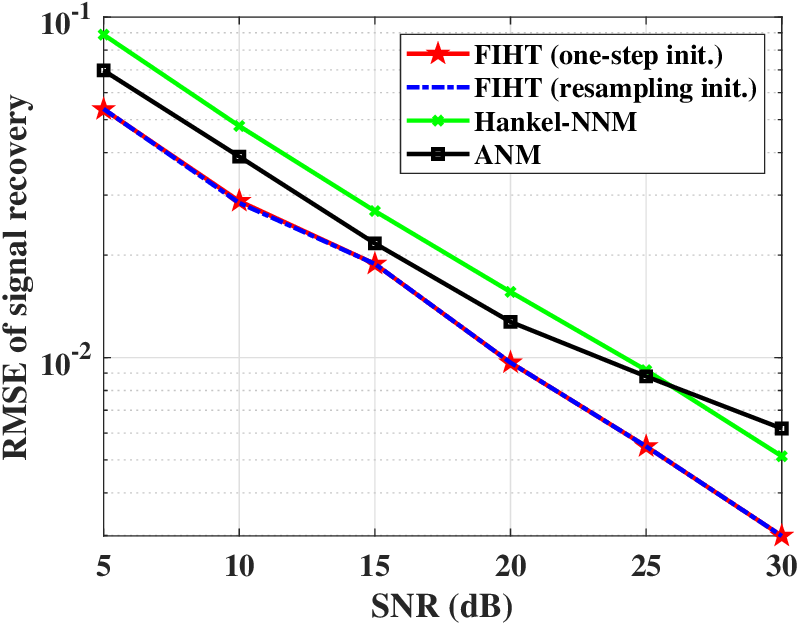} } 
	\qquad
	\subfigure[$m=100$]{\includegraphics[width=6.3cm]{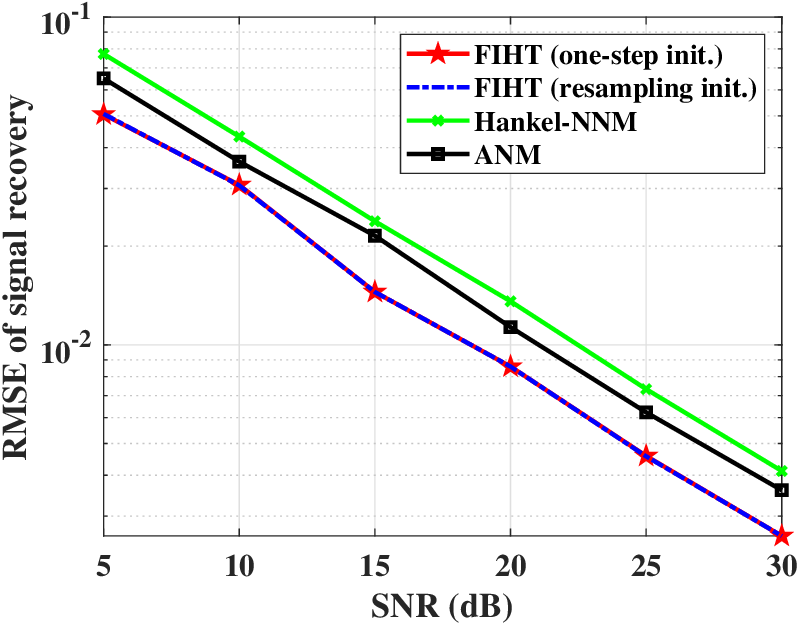} }
	\caption{RMSE of signal recovery versus SNR.} 
	\label{fig:Noisy}
\end{figure*}

\subsection{Computational Time}
We evaluate the computational efficiency of FIHT under different signal lengths and sparsity levels. 
The signals are generated in the same manner as in Section~\ref{sec:phase}.
We first fix $r=3$ as well as $m=\lfloor 0.8N \rfloor$, and vary the signal length $N=2^j+1$ for $j\in\lbra{6,7,\ldots,14}$. 
We then fix $N=127$ as well as $m=100$, and vary the sparsity level $r$ over the range $\lbra{2,6,\ldots,22}$.

Figure~\ref{fig:time} reports the average computational time over $50$ Monte Carlo trials in which all compared methods successfully recover the signal. 
As shown in the figure, FIHT is significantly faster than Hankel-NNM and ANM due to its lower computational complexity.

\subsection{Robustness to Noise}
We evaluate the robustness of FIHT in the presence of additive noise. 
The signal length $N$ is set to $127$. 
We randomly generate $r=5$ frequencies and add complex white Gaussian noise $\m{\xi}$ to the measurements $\m{y}$. 
The signal-to-noise ratio (SNR) is defined as $\text{SNR} =10\log_{10} (\twon{ \m{y} }^2/\twon{\m{\xi}}^2)$.
We vary the SNR from $5$ to $30$ dB.
For Hankel-NNM and ANM, we impose an inequality constraint to ensure that the data-fidelity error does not exceed the known noise energy. 
Figure~\ref{fig:Noisy} shows the RMSE of the recovered signal averaged over $50$ trials when $m=80$ and $m=100$. 
It can be observed that FIHT achieves the lowest estimation error.

\section{Proofs} \label{sec:proof}

\subsection{Proof of Theorem \ref{thm:LRIP}} \label{subsec:proof_LRIP}
The basic idea of the proof is to reformulate the LRIP analysis as the study of the supremum of a quadratic Gaussian chaos process. 
To control this quantity, we apply the deviation inequality in \cite[Theorem 3.1]{krahmer2014suprema}, restated in Lemma~\ref{lem:chaos}. 
The key quantity in this result is Talagrand's $\gamma_2$ functional, which can be controlled by the Gaussian width in Lemma \ref{lem:Talagrand}. 
Consequently, the main technical step is to establish a sharp Gaussian width bound for the structured set in LRIP, as given in Lemma~\ref{lem:width}. 
This argument relies on a new spectral norm estimate for random rectangular Hankel matrices established in Lemma~\ref{lem:norm}.

Before stating Lemma \ref{lem:chaos}, we introduce two complexity measures associated with a set of matrices $\cA$.
The first is the radius of $\cA$ with respect to the Frobenius norm and the spectral norm, defined as
$d_F\sbra{\cA} := \sup_{\m{A}\in \cA} \frobn{\m{A}}$ and $d_2\sbra{\cA} := \sup_{\m{A}\in \cA} \twon{\m{A}}$. The second is Talagrand's $\gamma_2$ functional $\gamma_2 \sbra{\cA, d}$, where $d$ is a metric; see \cite{talagrand2005generic} for its definition.

\begin{lemma}[{\cite[Theorem 3.1]{krahmer2014suprema}}] \label{lem:chaos}
	Let $\cA$ be a set of matrices, and $\m{\xi}$ be a random vector whose entries are i.i.d.\ $\cC\cN\sbra{0,1}$. Set
	\equ{
		\begin{split}
			E & := \gamma_2 \sbra{\cA, \norm{\cdot}_2} \mbra{ \gamma_2 \sbra{\cA, \norm{\cdot}_2} + d_F\sbra{\cA} } + d_F\sbra{\cA} d_2 \sbra{\cA}, \\
			V & := d_2 \sbra{\cA} \mbra{ \gamma_2 \sbra{\cA, \norm{\cdot}_2} + d_F\sbra{\cA} }, \\
			U & := d^2_2 \sbra{\cA}.
		\end{split} \label{eq:EVU}
	} 
	Then, for any $t>0$, 
	\equ{
		\bP \sbra{ \sup_{\m{A}\in \cA} \abs{ \twon{\m{A}\m{\xi}}^2 - \bE \twon{\m{A}\m{\xi}}^2 } \ge c_1 E + t } \le 2 \exp \sbra{ -c_2 \min \lbra{ \frac{t^2}{V^2}, \frac{t}{U} } },
	}
	where $c_1,c_2 >0$ are absolute constants.
\end{lemma}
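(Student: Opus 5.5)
The statement is quoted verbatim from \cite[Theorem 3.1]{krahmer2014suprema}, so in the paper it suffices to invoke that theorem. The only thing to check is that it applies to complex Gaussian vectors, since it is usually stated for real ones. My plan is to reduce to the real case and then recall the structure of the Krahmer--Mendelson--Rauhut argument, so that the constants $c_1,c_2$ are understood to be absolute.

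\emph{Step 1 (reduction to real Gaussians).} Write $\m{\xi}=(\m{g}+\mathrm{i}\m{h})/\sqrt{2}$ with $\m{g},\m{h}$ independent real standard Gaussian vectors. Set $\m{\eta}=[\m{g};\m{h}]$. For each $\m{A}=\m{A}_R+\mathrm{i}\m{A}_I\in\cA$, define the real matrix
\[
\widehat{\m{A}}=\tfrac{1}{\sqrt2}\begin{bmatrix}\m{A}_R & -\m{A}_I\\ \m{A}_I & \m{A}_R\end{bmatrix}.
\]
Then $\twon{\m{A}\m{\xi}}=\twon{\widehat{\m{A}}\m{\eta}}$. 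The map $\m{A}\mapsto\widehat{\m{A}}$ is linear and satisfies $\twon{\widehat{\m{A}}}=\twon{\m{A}}/\sqrt2$ and $\frobn{\widehat{\m{A}}}=\frobn{\m{A}}$. Hence $d_F$, $d_2$ and $\gamma_2(\cdot,\norm{\cdot}_2)$ of $\widehat{\cA}$ agree with those of $\cA$ up to factors of $\sqrt2$. These factors are absorbed into $c_1$ and $c_2$, so it suffices to prove the real statement.

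\emph{Step 2 (moment bound via chaining).} For real $\m{\eta}$, the goal is the $L_p$ estimate
\[
\Big\|\sup_{\m{A}}\big|\twon{\m{A}\m{\eta}}^2-\bE\twon{\m{A}\m{\eta}}^2\big|\Big\|_{L_p}\lesssim E+\sqrt{p}\,V+p\,U .
\]
I would follow KMR's route:
\begin{itemize}
\item First bound $\|\sup_{\m A}\twon{\m{A}\m{\eta}}\|_{L_p}\lesssim \gamma_2(\cA,\norm{\cdot}_2)+d_F(\cA)+\sqrt p\,d_2(\cA)$. This uses generic chaining along an admissible sequence. Each increment $\twon{(\m{A}-\m{A}')\m{\eta}}$ is a Lipschitz function of $\m{\eta}$ with constant $\twon{\m{A}-\m{A}'}$, so Gaussian concentration controls it at scale $\sqrt{p}$.
\item Next, split the centered quadratic form into its diagonal and off-diagonal parts. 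Decouple the off-diagonal chaos, so it becomes $\inp{\m{A}\m{\eta},\m{A}\m{\eta}'}$ with $\m{\eta}'$ an independent copy.
\item Chain again in this decoupled form. Conditioned on $\m{\eta}'$, it is a Gaussian process whose increments are controlled by $\twon{\m{A}-\m{A}'}\cdot\sup\twon{\m{A}\m{\eta}'}$. Plugging in the first bound produces the cross term $\gamma_2(\gamma_2+d_F)$ and the term $\sqrt p\,d_2(\gamma_2+d_F)$.
\item The diagonal part and the endpoint of the chain contribute $d_Fd_2$, together with $p\,d_2^2$ from Hanson--Wright applied to a single matrix.
\end{itemize}

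\emph{Step 3 (tail bound).} Markov's inequality with the $p$-th moment from Step 2, optimized over $p$, converts the moment bound into the stated probability $2\exp(-c_2\min\{t^2/V^2,t/U\})$.

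I expect the main obstacle to be the chaining in Step 2, specifically controlling the decoupled chaos uniformly over $\cA$ while keeping only $\gamma_2$ with respect to the operator norm rather than the Frobenius norm. Since the result is established in \cite{krahmer2014suprema}, in the paper I would simply cite it after the reduction in Step 1.
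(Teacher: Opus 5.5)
Your proposal is correct and matches the paper, which gives no proof and simply invokes \cite[Theorem 3.1]{krahmer2014suprema}. Your Step 1 realification is a valid supplement that the paper leaves implicit: $d_F$ is preserved and $d_2,\gamma_2$ are scaled by $1/\sqrt{2}$, so $\widehat{E}\le E$, $\widehat{V}\le V$ and $\widehat{U}\le U$, and the complex statement follows from the real one even with the same constants. Steps 2--3 only recall the cited argument and are not needed.
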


\begin{lemma}[{\cite[Theorem 2.1.1]{talagrand2005generic}, Talagrand's majorizing measure theorem}] \label{lem:Talagrand}
	Let $\lbra{G_t: t\in \cA}$ be a mean-zero Gaussian process indexed by a set $\cA$ and $d \sbra{t_1, t_2}:= (\bE \abs{ G_{t_1} - G_{t_2} }^2)^{\frac{1}{2}}$ be the canonical metric induced by the process. Then, 
	\equ{
		\gamma_2 \sbra{\cA,d} \le C \cdot \bE \sup_{t\in \cA} G_t,
	}	 
	where $C>0$ is an absolute constant.
\end{lemma}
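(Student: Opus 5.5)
This statement is the hard (lower-bound) half of Talagrand's majorizing measure theorem. The plan is the standard \emph{growth functional / partitioning scheme} argument. Define for every subset $\cA'\subseteq\cA$ the functional
\equ{
	F(\cA') := \bE \sup_{t\in\cA'} G_t .
}
Reduce first to finite $\cA$: by monotone convergence and the definition of $\gamma_2$ via admissible sequences, it suffices to bound $\gamma_2(\cA,d)$ uniformly over finite subsets. Since $F$ is monotone and $F(\{t\})=0$, the aim is to show that $F$ satisfies a \emph{growth condition}. Then appeal to (or reprove) Talagrand's partitioning theorem, which builds from such a functional an admissible sequence of partitions $(\cA_n)$, with $\abs{\cA_n}\le 2^{2^n}$, such that
\equ{
	\sup_{t}\sum_n 2^{n/2}\,\mathrm{diam}(A_n(t)) \le C F(\cA).
}

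The growth condition has the following form. Take $a>0$, $r\ge 4$, points $t_1,\dots,t_M$ with $M=2^{2^n}$ that are $a$-separated, and subsets $H_\ell\subseteq B(t_\ell, a/r)$. The condition requires
\equ{
	F\Big(\bigcup_\ell H_\ell\Big) \ge c\, a\, 2^{n/2} + \min_\ell F(H_\ell).
}
I would prove this by combining two Gaussian facts.
\begin{itemize}
	\item \emph{Sudakov minoration}, obtained from Slepian's comparison with i.i.d.\ Gaussians: $\bE\max_\ell G_{t_\ell} \ge c\,a\sqrt{\log M}\asymp a\,2^{n/2}$.
	\item \emph{Gaussian concentration} (Borell--TIS) for each $\sup_{t\in H_\ell}(G_t-G_{t_\ell})$. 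Its fluctuations are of order $\sigma=a/r$. A union bound over the $M$ indices then costs at most $C(a/r)\sqrt{\log M}$.
\end{itemize}
Writing $\sup_{\cup H_\ell} G \ge \max_\ell\big(G_{t_\ell}+\sup_{H_\ell}(G-G_{t_\ell})\big)$ and taking expectations gives the growth inequality, provided $r$ is chosen as a large enough absolute constant so that the concentration loss is absorbed into the Sudakov gain.

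The construction of the partitions proceeds recursively. At step $n$, each piece $A$ of $\cA_n$ is split greedily. Repeatedly choose a point $t$ together with the subset $B(t,\cdot)\cap A$ at scale $r^{-j}$ that maximizes $F$ on a smaller ball. Remove it, and continue until $N_n$ pieces have been produced; the leftover set is the final piece. The growth condition then guarantees that every time a point falls into a ``remaining'' piece rather than a selected one, the value of $F$ must drop by at least $c\,2^{n/2} r^{-j}$. Summing these decrements along the chain of pieces containing a fixed $t$ telescopes against $F(\cA)$, which yields the bound on $\sum_n 2^{n/2}\mathrm{diam}(A_n(t))$.

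The main obstacle is this bookkeeping. One must track, for each $t$, a sequence of scale indices $j_n(t)$ with $\mathrm{diam}(A_n(t))\le 2r^{-j_n(t)}$. One must also ensure that the scale decreases only in a controlled way, so that the terms $2^{n/2}r^{-j_n}$ telescope rather than accumulate. To handle this I would follow Talagrand's device of carrying the auxiliary quantity $F_{n}(A)$ (a ``remaining budget'') and proving by induction the invariant
\equ{
	F(A_n(t)) + c\sum_{k<n} 2^{k/2} r^{-j_k(t)} \le F(\cA) + C\, 2^{n/2} r^{-j_n(t)},
}
with separate cases according to whether the piece at step $n$ was selected or was the leftover.
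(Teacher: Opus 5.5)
Your overall route is the standard proof of the cited result. The paper gives no argument of its own, only the citation to Talagrand. The route is: show that $F(A)=\bE\sup_{t\in A}G_t$ satisfies the growth condition via Sudakov minoration and Gaussian concentration, then apply the partitioning theorem. Your verification of the growth condition is correct, and quoting the partitioning theorem would finish the proof. The gap is in your sketch of how you would reprove that theorem, i.e.\ in the bookkeeping you yourself flag.

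Use the natural conventions: at scale $j$, pieces are $B(t_\ell,r^{-j-1})$, and the centers $t_\ell$ greedily maximize $F$ of the remaining set on $B(\cdot,r^{-j-2})$. With these, the growth condition applied to $t_1,\dots,t_{N_n-1},u$ does \emph{not} make $F(A_{n+1}(t))$ drop when $t$ lands in the leftover $A'$ of $A=A_n(t)$. It only gives
\[
F\big(A'\cap B(u,r^{-j-2})\big)\le F(A)-c^*2^{n/2}r^{-j-1}\qquad\text{for all }u\in A',
\]
while $F(A')$ itself can be essentially $F(A)$ (take $A$ a simplex with far more than $N_n$ points). Your invariant is in fact false. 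At a selected step the budget $C2^{n/2}r^{-j_n}$ is multiplied by $\sqrt2/r$, which would force a decrease of $F$ that nothing provides. For instance, let $\cA=K\cup\{s\}$ with $K$ an $\epsilon$-simplex of $M$ points and $s$ at distance about $D=\Delta(\cA)$ from $K$. Take $\epsilon\le r^{-j_0-n-2}$ and $\epsilon\sqrt{\ln M}\gg D$. Then $K$ is selected intact at steps $0,\dots,n-1$, so $A_n(t)=K$ and $j_k(t)=j_0+k$ for $t\in K$. Meanwhile $F(\cA)-F(K)$ is exponentially small in $(F(K)/D)^2$. Your invariant would require $c\,r^{-j_0}\le F(\cA)-F(K)+C(\sqrt2/r)^n r^{-j_0}$, which fails once $n$ and $F(K)/D$ are large.

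The missing idea is how these small-ball drops telescope. If $t$ is in the leftover at step $n$ and scale $j$, any later piece $A_k(t)$ with $j_k(t)\ge j+3$ has diameter at most $2r^{-j-3}<r^{-j-2}$. So it lies inside $A_{n+1}(t)\cap B(t,r^{-j-2})$. Let $n_j$ be the last leftover step at scale $j$. Then the drops $c^*2^{n_j/2}r^{-j-1}$ telescope along each residue class of $j$ modulo $3$, giving $\sum_j 2^{n_j/2}r^{-j-1}\le 3F(\cA)/c^*$. Next, $2^{n/2}r^{-j_n(t)}$ grows by $\sqrt2$ on leftover steps and is multiplied by $\sqrt2/r$ on selected ones. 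An elementary counting argument then gives $\sum_n 2^{n/2}r^{-j_n(t)}\le L\big(r\sum_j 2^{n_j/2}r^{-j-1}+r^{-j_0}\big)$, with $r^{-j_0}<r\Delta(\cA)/2$. This is why the partitioning theorem reads $\gamma_2(\cA,d)\le L\big(\tfrac{r}{c^*}F(\cA)+r\Delta(\cA)\big)$ rather than your $\le CF(\cA)$. The diameter term is harmless, since $\bE\max(G_s,G_t)=d(s,t)/\sqrt{2\pi}$ gives $\Delta(\cA)\le\sqrt{2\pi}\,\bE\sup_t G_t$.

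Finally, $\gamma_2(\cA,d)\le C\sup\{\gamma_2(\cA',d):\cA'\subseteq\cA\text{ finite}\}$ is not a monotone-convergence fact, because it involves an infimum over partitions. It is easier to run the scheme on $\cA$ directly, with $F(A):=\sup_{A'\subseteq A\ \mathrm{finite}}\bE\sup_{A'}G$ and near-maximizers in the greedy step.
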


\begin{lemma} \label{lem:width}
	Let $S_r := \lbra{ \cG^*\m{Z}: \rank\sbra{\m{Z}} \le r, \frobn{\m{Z}} = 1 } \subset \bC^N$ and $\m{g} \in \bC^N$ be a random vector whose entries are i.i.d.\ $\cC\cN\sbra{0,1}$. Then, for $\m{z}\in \bC^N$, $\Re{ \left \langle \m{g}, \m{z} \right \rangle }$ is a mean-zero Gaussian process. And, the Gaussian width of $S_r$ is defined by $\omega \sbra{S_r} := \bE_{\m{g}} \sup_{\m{z}\in S_r} \Re{ \left \langle \m{g}, \m{z} \right \rangle } $ satisfying
	\equ{
		\omega \sbra{S_r} \le 2 \sqrt{ r \sbra{ c_s + \ln N } \ln N  }.
	}
\end{lemma}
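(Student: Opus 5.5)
The plan is to reduce the Gaussian width to the expected spectral norm of a random scaled Hankel matrix, and then bound that norm with a matrix concentration inequality; the announced Lemma~\ref{lem:norm} is the spectral norm estimate I would prove along the way.

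\emph{The process is Gaussian.} For fixed $\m{z}$, $\Re\inp{\m{g},\m{z}}$ is a real linear combination of the real and imaginary parts of the i.i.d.\ entries of $\m{g}$. Hence $\{\Re\inp{\m{g},\m{z}}\}_{\m{z}}$ is a mean-zero Gaussian process, and there is nothing more to check here.

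\emph{Reduction to a spectral norm.} For $\m{z}=\cG^*\m{Z}\in S_r$ we have $\inp{\m{g},\cG^*\m{Z}}=\inp{\cG\m{g},\m{Z}}$. If $\rank(\m{Z})\le r$ and $\frobn{\m{Z}}=1$, then $\norm{\m{Z}}_*\le\sqrt{r}$. By duality of the nuclear and spectral norms,
\[
\sup_{\m{z}\in S_r}\Re\inp{\m{g},\m{z}}\le\sqrt{r}\,\twon{\cG\m{g}} .
\]
It therefore suffices to show
\[
\bE\twon{\cG\m{g}}\le 2\sqrt{(c_s+\ln N)\ln N}.
\]

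\emph{Spectral norm of the random scaled Hankel matrix.} Write
\[
\cG\m{g}=\sum_{a=0}^{N-1} g_a w_a^{-1/2}\m{H}_a ,
\]
where $\m{H}_a$ is the $0/1$ matrix supported on the $a$-th anti-diagonal. This is a Gaussian matrix series, so I apply the noncommutative Khintchine / matrix Gaussian series bound:
\[
\bE\twon{\cG\m{g}}\lesssim\sigma\sqrt{\ln(n_1+n_2)}, \qquad \sigma^2=\max\Bigl\{\Bigl\|\sum_a w_a^{-1}\m{H}_a\m{H}_a^H\Bigr\|,\ \Bigl\|\sum_a w_a^{-1}\m{H}_a^H\m{H}_a\Bigr\|\Bigr\}.
\]
Both sums are diagonal. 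The $(i,i)$ entry of the first is $\sum_{j=0}^{n_2-1}1/w_{i+j}$, and the $(j,j)$ entry of the second is $\sum_{i=0}^{n_1-1}1/w_{i+j}$.

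\emph{Computing $\sigma^2$.} The weights are $w_a=\min\{a+1,n_1,N-a\}$, since $n_1\le n_2$. A row sum runs over $n_2$ consecutive indices $a=i,\dots,i+n_2-1$. Within that window, the flat part where $w_a=n_1$ contributes at most $(n_2-n_1+1)/n_1\le c_s$. Each of the two ramps, where $w_a=a+1$ or $w_a=N-a$, contributes at most a harmonic sum $H_{n_1}\le 1+\ln n_1$. So $\sigma^2\le c_s+2\ln N+O(1)$, and the column sums are bounded by $2\ln N+1$.

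\emph{Assembling.} Plugging $\sigma^2$ into the series bound gives $\bE\twon{\cG\m{g}}\lesssim\sqrt{(c_s+\ln N)\ln N}$, and hence the width bound with some constant.

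\emph{Main obstacle: the constant $2$.} The complex case and the generic constant of the matrix Khintchine inequality must be tracked. Because of this, I would instead prove Lemma~\ref{lem:norm} directly via a Chernoff bound in which the constants are explicit:
\[
\bE\twon{\cG\m{g}}^2\le\frac{1}{\theta}\ln\bE\,\mathrm{tr}\,e^{\theta\m{M}^H\m{M}}, \qquad \m{M}=\cG\m{g},
\]
or equivalently by applying Tropp's bound for the Hermitian dilation, with $\bE\twon{\m{M}}\le\sqrt{2\sigma^2\ln(n_1+n_2)}$. Together with $\ln(n_1+n_2)\le\ln(2N)$ and $\sigma^2\le c_s+2\ln N$, this should give the factor $2$ after absorbing lower-order terms, possibly using $N>1$.
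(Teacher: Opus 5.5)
Your reduction is the paper's. The real/imaginary embedding gives the Gaussian process, and $\inp{\m g,\cG^*\m Z}=\inp{\cG\m g,\m Z}$ with $\norm{\m Z}_*\le\sqrt r$ gives $\omega(S_r)\le\sqrt r\,\bE\twon{\cG\m g}$. At that point the paper invokes Lemma~\ref{lem:norm}, which is proved with the same matrix Gaussian series bound $\bE\twon{\m Z}\le\sqrt{2\sigma^2\ln(n_1+n_2)}$ that you propose. That bound is stated there for complex Gaussians, and splitting each $g_a$ into real and imaginary parts leaves $\sigma^2$ unchanged, so you do not need the trace-exponential detour.

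The gap is in your variance estimate, which is too loose to give the explicit constant $2$. Charging every window with both ramps in full gives $\sigma^2\le c_s+2\ln N+O(1)$, with column sums up to $2\ln N+1$. The $\ln^2 N$ terms then already match the target exactly ($2\cdot 2\ln N\cdot\ln N$ versus $4\ln N\cdot\ln N$), so there is no room left to absorb lower-order terms. Concretely, take your own final ingredients $\sigma^2\le c_s+2\ln N$ and $\ln(n_1+n_2)\le\ln(2N)$ at $c_s=1$. Then $2(1+2\ln N)\ln(2N)-4(1+\ln N)\ln N=(4\ln 2-2)\ln N+2\ln 2>0$ for every $N$. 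So the bound you obtain on $\bE\twon{\cG\m g}$ exceeds $2\sqrt{(c_s+\ln N)\ln N}$ already in the square case $c_s=1$.

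The missing observation is that no window contains both ramps in full. Sliding a window of fixed length $L$ by one step replaces $1/w_j$ with $1/w_{j+L}$. Since $a\mapsto 1/w_a$ is non-increasing, then non-decreasing, and symmetric under $a\mapsto N-1-a$, the window sums also first decrease and then increase, so they are maximized at the first window. Write $H_n:=\sum_{k=1}^n 1/k$. The row sums ($L=n_2$) are then at most $\sum_{a=0}^{n_2-1}1/w_a=H_{n_1}+(n_2-n_1)/n_1\le c_s+\ln n_1$. The column sums ($L=n_1$) are at most $\sum_{a=0}^{n_1-1}1/w_a=H_{n_1}\le 1+\ln n_1$. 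Equivalently, in the row window starting at $i$, the two ramps contribute $2H_{n_1-1}-H_i-H_{n_1-1-i}\le H_{n_1-1}$, not $2H_{n_1}$. Hence $\sigma^2\le c_s+\ln n_1$, and the series bound yields $\bE\twon{\cG\m g}\le\sqrt{2(c_s+\ln N)\ln(N+1)}\le 2\sqrt{(c_s+\ln N)\ln N}$, because $N+1\le N^2$ for $N\ge 2$. This leaves a full factor $2$ to spare inside the logarithm, and multiplying by $\sqrt r$ gives the lemma with the stated constant.
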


\begin{proof}
	Denote $\widetilde{\m{g}}:=\sbra{\Re\sbra{\m{g}}; \Im\sbra{\m{g}}} \in \bR^{2N}$ and $\widetilde{\m{z}}:=\sbra{\Re\sbra{\m{z}}; \Im\sbra{\m{z}}} \in \bR^{2N}$. We have $\Re{ \left \langle \m{g}, \m{z} \right \rangle } = \left \langle \widetilde{\m{g}}, \widetilde{\m{z}} \right \rangle$. Since $ \left \langle \widetilde{\m{g}}, \widetilde{\m{z}} \right \rangle$ is a Gaussian process \cite{vershynin2018high}, then $\Re{ \left \langle \m{g}, \m{z} \right \rangle }$ is also a Gaussian process. Moreover,
	\equ{
		\begin{split}
			\omega \sbra{S_r}
			& \le \bE_{\m{g}} \sup_{\m{z}\in S_r} \abs{ \left \langle \m{g}, \m{z} \right \rangle } \\
			& = \bE_{\m{g}} \sup_{ \rank  \sbra{\m{Z}} \le r, \frobn{\m{Z}} = 1 } \abs{ \left \langle \m{g}, \cG^*\m{Z} \right \rangle } \\
			& \le \bE_{\m{g}} \sup_{ \rank  \sbra{\m{Z}} \le r, \frobn{\m{Z}} = 1 } \twon{\cG \m{g}} \norm{\m{Z}}_* \\
			& \le \sqrt{r} \bE_{\m{g}} \twon{\cG \m{g}} \\
			& \le 2 \sqrt{ r \sbra{ c_s + \ln N } \ln N  }, 
		\end{split}
	}
	where the third inequality follows from $\norm{\m{Z}}_* \le \sqrt{\rank\sbra{\m{Z}}} \frobn{\m{Z}}$ and the last one follows from Lemma \ref{lem:norm}.
\end{proof}

\begin{lemma} \label{lem:norm}
	Let $\m{g} \in \bC^N$ be a random vector whose entries are i.i.d.\ $\cC\cN\sbra{0,1}$ and $\cG\m{g} \in \bC^{n_1 \times n_2}$ denote the associated Hankel matrix, where $n_1+n_2-1 = N$ and $N>1$. Denote $c_s := n_2/n_1 \ge 1$. Then,
	\equ{
		\bE \twon{\cG\m{g}} \le 2 \sqrt{ \sbra{ c_s + \ln N } \ln N }.
	}
	When $n_1 = n_2$, we further have $\bE \twon{\cG\m{g}} \le 4 \ln N$.
\end{lemma}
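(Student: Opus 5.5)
We will write the scaled Hankel matrix as a Gaussian series and apply the matrix Gaussian series inequality (Tropp's noncommutative Khintchine-type bound). Let $\m{E}_a\in\bR^{n_1\times n_2}$ be the $0/1$ matrix with ones exactly on the $a$-th anti-diagonal, $\{(i,j): i+j=a\}$. Then
\begin{equation}
\cG\m{g}=\sum_{a=0}^{N-1} \frac{g_a}{\sqrt{w_a}}\,\m{E}_a =: \sum_{a=0}^{N-1} g_a \m{A}_a .
\end{equation}
For a complex Gaussian series $\sum_a g_a\m{A}_a$ of $n_1\times n_2$ matrices, the standard bound is
\begin{equation}
\bE\twon{\textstyle\sum_a g_a\m{A}_a}\le \sqrt{2\sigma^2\ln(n_1+n_2)},
\end{equation}
where $\sigma^2=\max\{\twon{\sum_a\m{A}_a\m{A}_a^H},\twon{\sum_a\m{A}_a^H\m{A}_a}\}$. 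For complex $g_a$ we split into real and imaginary parts, each with variance $1/2$; this changes only constants. Since $n_1+n_2=N+1\le N^2$ for $N\ge 2$, the logarithmic factor is at most $2\ln N$.

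The main computation is $\sigma^2$. Because $\m{E}_a$ is a partial permutation matrix, $\m{E}_a\m{E}_a^H$ is diagonal, with a $1$ in row $i$ whenever $0\le a-i\le n_2-1$. Hence $\sum_a w_a^{-1}\m{E}_a\m{E}_a^H$ is diagonal with $(i,i)$ entry $\sum_{j=0}^{n_2-1} 1/w_{i+j}$, and similarly for the other product with column sums $\sum_{i=0}^{n_1-1}1/w_{i+j}$. The multiplicities satisfy $w_a=\min\{a+1,\,n_1,\,N-a\}$: they ramp up to $n_1$, plateau at $n_1$ over a stretch of length $n_2-n_1+1$, and ramp back down. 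A row sum therefore splits into three parts:
\begin{itemize}
\item at most two harmonic pieces, each bounded by $1+\ln n_1$;
\item a plateau contribution of at most $(n_2-n_1+1)/n_1\le c_s$.
\end{itemize}
So the row sums are at most of order $c_s+\ln N$. Each column sum has at most $n_1$ terms; the worst case is a harmonic piece, which is again $O(1+\ln n_1)$. We thus get $\sigma^2\le c_s+ C\ln N$. Tracking constants (using, e.g., $\sum_{k=1}^{n}1/k\le 1+\ln n$ and the overlap structure, so that the total is at most $c_s+2\ln N$ or similar) should give
\begin{equation}
\bE\twon{\cG\m{g}}\le 2\sqrt{(c_s+\ln N)\ln N}.
\end{equation}

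For $n_1=n_2$ the plateau has length one, so each row sum is a sum of two harmonic pieces totalling about $2\ln N$. This bound is also at least $c_s=1$. Hence $\sigma^2\le 2\ln N+O(1)$ and $\bE\twon{\cG\m{g}}\le\sqrt{2\sigma^2\cdot 2\ln N}\le 4\ln N$.

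The main obstacle is bookkeeping constants. We must make sure that:
\begin{itemize}
\item the complex-to-real reduction costs at most a constant;
\item $\ln(n_1+n_2)\le 2\ln N$, which holds for $N\ge 2$;
\item the harmonic sums combine to at most $c_s+\ln N$ up to the stated factor, including the small-$N$ cases such as $N=2,3$.
\end{itemize}
If the constants do not close directly, we fall back to the Gaussian series bound in the form with $\sqrt{2\sigma^2\ln(n_1+n_2)}$ for real series. We then apply it to the real $2N$-dimensional representation via the dilation, and absorb the slack using $\ln N\ge\ln 2$.
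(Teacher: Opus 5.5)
Your route is the paper's: expand $\cG\m{g}=\sum_a g_a w_a^{-1/2}\cH\m{e}_a$, observe that both variance matrices are diagonal, apply Tropp's bound $\bE\twon{\cG\m{g}}\le\sqrt{2\sigma^2\ln(n_1+n_2)}$, and use $\ln(N+1)\le 2\ln N$. The gap is the variance estimate. Because the constant $2$ in the statement is explicit, that estimate carries the whole proof.

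Once you replace $\ln(n_1+n_2)$ by $2\ln N$, you need exactly $\sigma^2\le c_s+\ln N$. Your row-sum bound (two ramp pieces, each at most $1+\ln n_1$, plus a plateau at most $c_s$) gives only $\sigma^2\le c_s+2+2\ln n_1$. This exceeds $c_s+\ln N$ whenever $n_2$ is comparable to $n_1$; in the square case $\ln N<\ln 2+\ln n_1$. Your guess ``$c_s+2\ln N$ or similar'' is likewise insufficient.

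Keeping $\ln(N+1)$ undoubled does not rescue your bound either. For $n_1=n_2=2$ it gives $\sigma^2\le 3+2\ln 2$, hence only $\sqrt{2(3+2\ln 2)\ln 4}\approx 3.49$, whereas the claim is $2\sqrt{(1+\ln 3)\ln 3}\approx 3.04$; the true value is $\sigma^2=3/2$. Your fallbacks do not help. Absorbing slack via $\ln N\ge\ln 2$ is impossible with an explicit constant, and a real $2N$-dimensional dilation only enlarges the dimension factor. The complex-to-real split loses nothing at all: the coefficients $\m{A}_a/\sqrt2$ and $\mathrm{i}\m{A}_a/\sqrt2$ have exactly the variance statistic of $\m{A}_a$, so Tropp's bound holds verbatim for $\cC\cN(0,1)$ coefficients. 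With an explicit constant you should state this, not ``only constants change''.

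The missing observation is that the two ramp pieces in a row are complementary, so together they never exceed one full harmonic sum. Let $H_k:=\sum_{j=1}^k 1/j$ and $w_a=\min\{a+1,n_1,N-a\}$. Row $i\in[0,n_1-1]$ covers $a\in[i,i+n_2-1]$: the ascending ramp from $a=i$, the whole plateau $a\in[n_1,n_2-1]$, and the first $i$ terms of the descending ramp. Hence
\[
\sum_{a=i}^{i+n_2-1}\frac{1}{w_a}=\bigl(H_{n_1}-H_i\bigr)+\frac{n_2-n_1}{n_1}+\bigl(H_{n_1-1}-H_{n_1-1-i}\bigr)\le H_{n_1}+c_s-1\le c_s+\ln n_1,
\]
by subadditivity $H_{n_1-1}\le H_i+H_{n_1-1-i}$. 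So the maximum is attained at the first row, which is exactly the identification the paper makes.

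The analogous comparison for the length-$n_1$ column windows bounds every column sum by $H_{n_1}\le 1+\ln n_1$. Hence $\sigma^2\le c_s+\ln n_1\le c_s+\ln N$, and $\ln(N+1)\le 2\ln N$ then gives $2\sqrt{(c_s+\ln N)\ln N}$.

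The square case then needs no separate computation. With $c_s=1$, $2\sqrt{(1+\ln N)\ln N}\le 4\ln N$, since $1+\ln N\le 4\ln N$ for $N\ge 2$.
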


\begin{proof}
	The proof of Lemma \ref{lem:norm} is deferred to Appendix \ref{append:norm}.
\end{proof}

\begin{rem}
	Lemma~\ref{lem:norm} extends \cite[Theorem~3]{cai2016robust} to rectangular Hankel matrices and quantifies how the size $\sbra{n_1,n_2}$ affects the upper bound, with an explicit constant.
	In addition, using \cite[Theorem~4.1.1]{tropp2015introduction} (restated in Lemma \ref{lem:trop}), we provide a more direct proof than that in \cite{cai2016robust}.
\end{rem}

\begin{proof}[Proof of Theorem \ref{thm:LRIP}]
	Recall that $S_r = \lbra{ \cG^*\m{Z}: \rank\sbra{\m{Z}} \le r, \frobn{\m{Z}} = 1 }$. For $\m{z} \in S_r$, we define 
	\equ{
		\m{V}_{\m{z}} := \frac{1}{\sqrt{m}} \m{I}_m \otimes \m{z}^T \in \bC^{m \times mN}, \qquad \m{\xi} := \sqrt{m} \text{vec} \sbra{\m{B}^T} \in \bC^{mN}.  \label{eq:Vz}
	}
	Here, $\otimes$ is the Kronecker product and $\text{vec} \sbra{\cdot}$ is the vectorization operator that stacks the columns of a matrix into a vector.
	Then, we have
	\equ{
		\m{B}\m{z} = \text{vec}\sbra{ \m{B}\m{z} } = \text{vec}\sbra{ \m{z}^T\m{B}^T \m{I}_m } = \sbra{ \m{I}_m \otimes \m{z}^T} \text{vec} \sbra{\m{B}^T} = \m{V}_{\m{z}} \m{\xi}. \label{eq:Bz=Vg}
	}
	
	Let
	\equ{
		\cA := \lbra{\m{V}_{\m{z}}: \m{z}\in S_r}. \label{eq:A}
	}
	Then, we have
	\equ{
		\begin{split}
			\delta_r
			& =\sup_{\substack{\rank(\m{Z})\le r\\\m{Z}\ne\m{0}}}
			\frac{\abs{\twon{\m{B}\cG^*\m{Z}}^2-\twon{\cG^*\m{Z}}^2}}{\frobn{\m{Z}}^2}\\
			& =\sup_{\substack{\rank(\m{Z})\le r\\\frobn{\m{Z}}=1}}
			\abs{\twon{\m{B}\cG^*\m{Z}}^2-\twon{\cG^*\m{Z}}^2}\\
			& =\sup_{\m{z}\in S_r}\abs{\twon{\m{B}\m{z}}^2-\twon{\m{z}}^2} \\
			& =\sup_{\m{V}_{\m{z}}\in\cA}
			\abs{\twon{\m{V}_{\m{z}}\m{\xi}}^2-\bE\twon{\m{V}_{\m{z}}\m{\xi}}^2},
		\end{split} \label{eq:delta_r}
	}
	where the last equality follows from \eqref{eq:Bz=Vg} and $\bE\twon{\m{B}\m{z}}^2=\twon{\m{z}}^2$ since the entries of $\m B$ are i.i.d.\ $\mathcal{CN}(0,1/m)$.
	
	Since we aim to show $\bP \sbra{\delta_r \le \delta} \ge 1 - \varepsilon$, it follows from \eqref{eq:delta_r} that it suffices to show 
	\equ{
		\bP\sbra{ \sup_{\m{V}_{\m{z}} \in \cA} \abs{ \twon{\m{V}_{\m{z}} \m{\xi} }^2 - \bE \twon{\m{V}_{\m{z}} \m{\xi} }^2 } \le \delta } \ge 1-\varepsilon,
	}
	or equivalently,
	\equ{
		\bP\sbra{ \sup_{\m{V}_{\m{z}} \in \cA} \abs{ \twon{\m{V}_{\m{z}} \m{\xi} }^2 - \bE \twon{\m{V}_{\m{z}} \m{\xi} }^2 } \ge \delta } \le \varepsilon.
	}
	Since the entries of $\m B$ are i.i.d.\ $\mathcal{CN}(0,1/m)$, it follows from \eqref{eq:Vz} that $\m{\xi}$ has i.i.d.\ entries following $\cC\cN\sbra{0,1}$. Then applying Lemma \ref{lem:chaos}, it suffices to show that $c_1 E + t \le \delta$ and $2 \exp \sbra{ -c_2 \min \lbra{ \frac{t^2}{V^2}, \frac{t}{U} } } \le \varepsilon$. Setting $t = \delta / 2$, the above conditions reduce to 
	\equ{
		E \le \frac{\delta}{2c_1}, \qquad V \le \frac{\delta\sqrt{c_2}}{2}  \sbra{\ln \frac{2}{\varepsilon}}^{-1/2}, \qquad U\le \frac{\delta c_2}{2} \sbra{\ln \frac{2}{\varepsilon}}^{-1}. \label{eq:EVU_condtion}
	}
	
	It remains to compute $E$, $V$, and $U$ in \eqref{eq:EVU} with respect to $\cA$. Specifically, we have
	\equ{
		\begin{split}
			d_F\sbra{\cA} & = \sup_{\m{V}_{\m{z}}\in{\cA}} \frobn{\m{V}_{\m{z}}} = \sup_{\m{z}\in S_r} \frac{1}{\sqrt{m}} \frobn{ \m{I}_m \otimes \m{z}^T } = \sup_{\m{z}\in S_r} \frac{1}{\sqrt{m}} \sqrt{m} \twon{\m{z}} \le 1, \\
			d_2\sbra{\cA} & = \sup_{\m{V}_{\m{z}}\in{\cA}} \twon{\m{V}_{\m{z}}} = \sup_{\m{z}\in S_r} \frac{1}{\sqrt{m}} \twon{ \m{I}_m \otimes \m{z}^T } = \sup_{\m{z}\in S_r} \frac{1}{\sqrt{m}} \twon{\m{I}_m} \twon{\m{z}} \le \frac{1}{\sqrt{m}},
		\end{split} \label{eq:dFd2}
	}
	where we used the property $\twon{ \m{A} \otimes \m{B}} = \twon{\m{A}} \twon{\m{B}}$.
	
	Note that for $\m{V}_{\m{z}_i} \in \cA$ and $\m{z}_i \in S_r$, $i = 1,2$, we have 
	\equ{
	\twon{\m{V}_{\m{z}_1}-\m{V}_{\m{z}_2}}
	= \twon{ \frac{1}{\sqrt m} \sbra{ \m{I}_m \otimes (\m{z}_1-\m{z}_2)^T } }
	= \frac{1}{\sqrt m} \twon{\m{z}_1-\m{z}_2}.
	}
	Combining this with the homogeneous property of the $\gamma_2$ functional yields
	\equ{
		\gamma_2 \sbra{\cA, \twon{\cdot}} 
		= \frac{1}{\sqrt{m}} \gamma_2 \sbra{S_r, \twon{\cdot}}. \label{eq:gamma2=}
	}
	To apply Lemma~\ref{lem:Talagrand}, we need construct a Gaussian process associated with $S_r$ under the metric $\twon{\cdot}$.
	Let $\widetilde{\m{z}}:=\sbra{\Re(\m{z}); \Im(\m{z})}\in \bR^{2N}$ and $\widetilde{S}_r := \lbra{ \widetilde{\m{z}}: \m{z}\in S_r }$. Let $\m{g} \in \bC^N$ with i.i.d.\ entries following $\cC\cN\sbra{0,1}$ and $\widetilde{\m{g}} := \sbra{\Re(\m{g}); \Im(\m{g})} \in \bR^{2N}$ with entries following $\cN\sbra{0,1/2}$. We consider the Gaussian process $\left \langle \widetilde{\m{g}}, \widetilde{\m{z}} \right \rangle$.
	For $\widetilde{\m{z}}_1,\widetilde{\m{z}}_2 \in \widetilde{S}_r$, the metric induced by this process satisfies
	\equ{
		d^2\sbra{ \widetilde{\m{z}}_1, \widetilde{\m{z}}_2 } = \bE \abs{ \left \langle \widetilde{\m{g}}, \widetilde{\m{z}}_1 \right \rangle - \left \langle \widetilde{\m{g}}, \widetilde{\m{z}}_2 \right \rangle }^2 = \sbra{ \widetilde{\m{z}}_1 - \widetilde{\m{z}}_2 }^T \bE \sbra{\widetilde{\m{g}} \widetilde{\m{g}}^T } \sbra{ \widetilde{\m{z}}_1 - \widetilde{\m{z}}_2 } = \frac{1}{2} \twon{\widetilde{\m{z}}_1 - \widetilde{\m{z}}_2}^2, \label{eq:metric}
	} 
	corresponding to the metric $\twon{\cdot}$.
	Then we have
	\equ{
		\gamma_2 \sbra{\cA, \twon{\cdot}}
		= \frac{1}{\sqrt{m}} \gamma_2 \sbra{\widetilde{S}_r, \twon{\cdot}} 
		\le \frac{C}{\sqrt{m}} \bE_{\widetilde{\m{g}}} \sup_{\widetilde{\m{z}}\in \widetilde{S}_r}  \left \langle \widetilde{\m{g}}, \widetilde{\m{z}} \right \rangle
		= \frac{C}{\sqrt{m}} \bE_{\m{g}} \sup_{\m{z}\in S_r} \Re \left \langle \m{g}, \m{z} \right \rangle
		\le 2C \sqrt{\frac{ r \sbra{ c_s + \ln N } \ln N }{m}}, \label{eq:gamma2}
	}
	where the first equality follows from \eqref{eq:gamma2=}, the first inequality follows from Lemma \ref{lem:Talagrand}, and the last one follows from Lemma \ref{lem:width}. 
	
	Putting \eqref{eq:dFd2} and \eqref{eq:gamma2} into \eqref{eq:EVU} yields 
	\equ{
		\begin{split}
			E & \le 4C^2 \frac{r \sbra{ c_s + \ln N } \ln N}{m} +  \frac{2C \sqrt{r \sbra{ c_s + \ln N } \ln N} + 1}{\sqrt{m}}, \\
			V & \le 2C \frac{\sqrt{r \sbra{ c_s + \ln N } \ln N}}{m} + \frac{1}{\sqrt{m}}, \\
			U & \le \frac{1}{m}.
		\end{split} \label{eq:EVU_bound}
	}
	
	To satisfy the constraints in \eqref{eq:EVU_condtion}, we impose them separately on each term of the upper bounds in \eqref{eq:EVU_bound}, which yields sufficient lower bounds on the sample size $m$. Collecting all requirements and taking the dominant one, we arrive at
	\equ{
		m \ge C \delta^{-2} \mbra{ r \sbra{ c_s + \ln N } \ln N  + \ln \sbra{2\varepsilon^{-1}} },
	}
	for some new universal constant $C>0$. In particular, let $\varepsilon = N^{-\ln N}$, for $c_s=1$, this reduces to $m \ge C \delta^{-2} r \ln^2 N$ with probability at least $1-N^{-\ln N}$.
\end{proof}

\subsection{Proof of Theorem \ref{thm:guarantee1}} \label{sec:recovery1}
We first prove that the iterative procedure enjoys local convergence, and then show that, under the one-step hard thresholding initialization, FIHT converges to the true signal. 

In Lemma \ref{lem:local}, we show that, provided the initial estimate lies within the basin of attraction, FIHT converges linearly to the true signal $\m{z}_\star = \cD \m{x}_\star$.

\begin{lemma}[Local convergence] \label{lem:local}
	Assume that $\frobn{\m{L}_0 - \cG\m{z}_\star} \le \frac{1}{4} \sigma_{\text{min}} \sbra{\cG\m{z}_\star}$ and $\m{B}$ satisfies the LRIP of rank $4r$ with constant $\delta_{4r} < \frac{1}{8}$. Then the iterate $\m{z}_t$ in Algorithm \ref{alg:FIHT} satisfies  
	\equ{
		\twon{\m{z}_t - \m{z}_\star} \le v^t \frobn{\m{L}_0 - \cG\m{z}_\star},
	}
	with constant $v < 1$.
\end{lemma}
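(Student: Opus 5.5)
We argue by induction on $t$, working in the lifted matrix domain with $\m{L}_\star := \cG\m{z}_\star$, which has rank $r$. The goal is a contraction
\[
\frobn{\m{L}_{t+1}-\m{L}_\star}\le v\frobn{\m{L}_t-\m{L}_\star}
\]
for all $t$, together with the invariant $\frobn{\m{L}_t-\m{L}_\star}\le \frac14\sigma_{\min}(\m{L}_\star)$. Since $\cG^*$ is a contraction and $\cG^*\m{L}_\star=\m{z}_\star$, we have $\twon{\m{z}_t-\m{z}_\star}=\twon{\cG^*(\m{L}_t-\m{L}_\star)}\le\frobn{\m{L}_t-\m{L}_\star}$, so the stated bound follows.

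\emph{Step 1 (error decomposition).} Because $\m{y}=\m{B}\m{z}_\star$, we have $\m{z}_t-\m{g}_t=\m{z}_t-\m{B}^H\m{B}(\m{z}_t-\m{z}_\star)$. Write $\m{E}_t:=\m{L}_t-\m{L}_\star$, so $\m{z}_t-\m{z}_\star=\cG^*\m{E}_t$. Then
\[
\m{W}_t-\m{L}_\star=(\m{L}_t-\m{L}_\star)-\cP_{T_t}\cG\m{B}^H\m{B}\cG^*\m{E}_t+(\cP_{T_t}\cG\cG^*\m{L}_t-\m{L}_t)+\ldots
\]
It is cleaner to write $\m{W}_t=\cP_{T_t}\cG\m{z}_t-\cP_{T_t}\cG\m{B}^H\m{B}\cG^*\m{E}_t$ and use $\cG\m{z}_\star=\m{L}_\star$. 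This gives
\[
\m{W}_t-\m{L}_\star=\underbrace{(\cP_{T_t}\cG\cG^*-\cP_{T_t}\cG\m{B}^H\m{B}\cG^*)\m{E}_t}_{I_1}-\underbrace{(\cI-\cP_{T_t})\m{L}_\star}_{I_2}.
\]
Here we used $\cP_{T_t}\cG\m{z}_t-\m{L}_\star=\cP_{T_t}\cG\cG^*\m{E}_t+\cP_{T_t}\m{L}_\star-\m{L}_\star$.

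\emph{Step 2 (bounding the terms).} For $I_2$, use the standard tangent-space estimate
\[
\frobn{(\cI-\cP_{T_t})\m{L}_\star}\le\frobn{\m{E}_t}^2/\sigma_{\min}(\m{L}_\star)\le\tfrac14\frobn{\m{E}_t}.
\]
For $I_1$, we need $\twon{\cP_{T_t}\cG(\cI-\m{B}^H\m{B})\cG^*\cP}\lesssim\delta_{4r}$ on the relevant matrices. Test against $\m{X}\in T_t$ (every element of $T_t$ has rank $\le 2r$), and split $\m{E}_t$, which has rank $\le 2r$. Polarizing the LRIP of Definition~\ref{def:LRIP} (applied to $\m{X}\pm\m{E}_t$ and $\m{X}\pm \mathrm{i}\m{E}_t$, all of rank $\le 4r$) gives
\[
\abs{\inp{\m{B}\cG^*\m{X},\m{B}\cG^*\m{E}_t}-\inp{\cG^*\m{X},\cG^*\m{E}_t}}\le c\,\delta_{4r}\frobn{\m{X}}\frobn{\m{E}_t}.
\]
Taking the supremum over unit-norm $\m{X}\in T_t$ yields $\frobn{I_1}\le c\,\delta_{4r}\frobn{\m{E}_t}$. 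The point is that $\cP_{T_t}$ confines both arguments to low rank, which is exactly why LRIP was posed over all low-rank matrices rather than only Hankel ones.

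\emph{Step 3 (thresholding and closing the induction).} Since $\m{L}_{t+1}=\cT_r(\m{W}_t)$ and $\m{L}_\star$ has rank $r$,
\[
\frobn{\m{L}_{t+1}-\m{L}_\star}\le 2\frobn{\m{W}_t-\m{L}_\star}\le 2(c\,\delta_{4r}+\tfrac14)\frobn{\m{E}_t}.
\]
With constants as above this is borderline. I expect the main obstacle to be obtaining $v<1$ with the constants $\frac14$ and $\frac18$. To fix it, I would sharpen the factor $2$ from $\cT_r$ to $1+O(\frobn{\m{W}_t-\m{L}_\star}/\sigma_{\min})$ via a perturbation bound for the best rank-$r$ approximation. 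Alternatively, I would sharpen the $I_2$ bound, using that $I_2$ lies in $T_t^\perp$ while $I_1\in T_t$, so the two are orthogonal and $\frobn{\m{W}_t-\m{L}_\star}^2=\frobn{I_1}^2+\frobn{I_2}^2$. This orthogonality should give $v<1$ for $\delta_{4r}<1/8$. Since $v<1$, the invariant $\frobn{\m{E}_{t+1}}\le\frac14\sigma_{\min}$ is preserved and the induction closes.
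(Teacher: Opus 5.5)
Your proof is correct and follows the paper's argument essentially step for step: the same splitting of $\m{W}_t-\cG\m{z}_\star$ into $\cP_{T_t}\cG(\cI-\m{B}^H\m{B})\cG^*\m{E}_t$ and $-(\cI-\cP_{T_t})\cG\m{z}_\star$, a polarized LRIP at rank $4r$ for the first term, the tangent-space estimate for the second, the factor $2$ from $\cT_r$, and the same induction on $\frobn{\m{E}_t}\le\frac14\sigma_{\text{min}}(\cG\m{z}_\star)$.

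The only correction is to your worry that the constants are borderline. The paper first rotates by a unimodular phase, so that only the real part of the inner product needs polarizing; this gives $c=1$ and hence $v=2\delta_{4r}+\frac12<\frac34$ directly. Even the cruder four-term identity, with $c=2$, gives $v=4\delta_{4r}+\frac12<1$. So neither the sharpened thresholding bound nor your orthogonality refinement $\frobn{\m{W}_t-\cG\m{z}_\star}^2=\frobn{I_1}^2+\frobn{I_2}^2$ is needed, although the refinement is valid.
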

\begin{proof}
	The proof is deferred to Appendix \ref{subsec:local}.
\end{proof}

In Lemma \ref{lem:init1}, we provide a bound on the error of the initial estimate $\m{L}_0$ from the true Hankel matrix $\cG\m{z}_\star$.
\begin{lemma} \label{lem:init1}
	Let $\m{L}_0 = \cT_r\sbra{ \cG \m{B}^H \m{y} }$ where $\m{y}=\m{B}\m{z}_\star$. Then, 
	\equ{
		\frobn{\m{L}_0 - \cG\m{z}_{\star}} \le 2 \delta_{2r} \frobn{\cG\m{z}_{\star}}. \label{eq:L0gz}
	}
\end{lemma}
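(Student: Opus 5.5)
The plan is the standard one-step hard thresholding argument from matrix RIP theory, adapted to the lifted setting. Write $\m{M}_\star := \cG\m{z}_\star$, a rank-$r$ matrix with $\cG^*\m{M}_\star = \m{z}_\star$, and $\m{Y} := \cG\m{B}^H\m{y} = \cG\m{B}^H\m{B}\cG^*\m{M}_\star$. Since $\m{L}_0 = \cT_r(\m{Y})$ is the best rank-$r$ approximation of $\m{Y}$ and $\m{M}_\star$ has rank $r$, we have $\frobn{\m{L}_0 - \m{Y}} \le \frobn{\m{M}_\star - \m{Y}}$. The triangle inequality then gives $\frobn{\m{L}_0-\m{M}_\star} \le 2\frobn{\m{Y}-\m{M}_\star}$. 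This is not yet good enough, because $\m{Y}-\m{M}_\star$ need not be low rank. So we refine the argument. Let $\cP_U$ be the orthogonal projection onto matrices whose column and row spaces lie in the joint (at most rank-$2r$) row/column spans of $\m{L}_0$ and $\m{M}_\star$. Since $\m{L}_0$ is also the best rank-$r$ approximation of $\cP_U\m{Y}$, which has the same leading structure, we get $\frobn{\m{L}_0-\m{M}_\star} \le 2\frobn{\cP_U(\m{Y}-\m{M}_\star)}$. More simply, we will bound
\begin{equation}
\frobn{\m{L}_0-\m{M}_\star} \le 2\sup_{\rank(\m{Z})\le 2r,\ \frobn{\m{Z}}=1} \abs{\inp{\m{Y}-\m{M}_\star,\m{Z}}},
\end{equation}
by taking $\m{Z}$ proportional to $\m{L}_0-\m{M}_\star$, which has rank at most $2r$. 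To justify this, expand $\frobn{\m{L}_0-\m{Y}}^2\le\frobn{\m{M}_\star-\m{Y}}^2$. This yields $\frobn{\m{L}_0-\m{M}_\star}^2 \le 2\Re\inp{\m{Y}-\m{M}_\star,\m{L}_0-\m{M}_\star}$, and dividing by $\frobn{\m{L}_0-\m{M}_\star}$ gives the displayed bound.

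The key step is to bound the bilinear form $\inp{\m{Y}-\m{M}_\star,\m{Z}}$ for rank-$2r$ $\m{Z}$ via the LRIP. Because $\m{M}_\star$ is Hankel, $\m{M}_\star = \cG\cG^*\m{M}_\star$, and so
\begin{equation}
\inp{\m{Y}-\m{M}_\star,\m{Z}} = \inp{\m{B}\cG^*\m{M}_\star,\m{B}\cG^*\m{Z}} - \inp{\cG^*\m{M}_\star,\cG^*\m{Z}}.
\end{equation}
This is exactly the deviation of the sesquilinear form $\inp{\m{B}\cdot,\m{B}\cdot}$ from $\inp{\cdot,\cdot}$ on vectors of the form $\cG^*(\cdot)$. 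We then polarize. For unit-normalized $\m{M}=\m{M}_\star/\frobn{\m{M}_\star}$ and $\m{Z}$, the combinations $\m{M}+\theta\m{Z}$, with $\theta\in\{\pm1,\pm \mathrm{i}\}$, have rank at most $3r$. Applying the LRIP to each of them shows that the real part of the deviation is at most $\tfrac14\delta_{3r}(\frobn{\m{M}+\m{Z}}^2+\frobn{\m{M}-\m{Z}}^2)=\delta_{3r}$.

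The main obstacle is matching the constant and rank index exactly, that is, getting $\delta_{2r}$ rather than $\delta_{3r}$. To obtain $\delta_{2r}$, I would first try to use the sharper form $\frobn{\m{L}_0-\m{M}_\star}^2\le 2\Re\inp{\m{Y}-\m{M}_\star,\m{L}_0-\m{M}_\star}$ and write the pair as $\m{M}_\star$ and $\m{L}_0$ themselves. Since $\m{L}_0-\m{M}_\star=\m{L}_0-\m{M}_\star$, the deviation splits into $\Re\inp{(\cG\m{B}^H\m{B}\cG^*-\cG\cG^*)\m{M}_\star,\m{L}_0}$ minus the same expression with $\m{L}_0$ replaced by $\m{M}_\star$. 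Each piece involves matrices of rank at most $r$, and polarizing with sums of rank at most $2r$ gives a $\delta_{2r}$ bound. If that bookkeeping gives a slightly worse constant, the fallback is to accept $\delta_{3r}\le\delta_{4r}$. This fallback is harmless for Theorem \ref{thm:guarantee1}, since only $\delta_{4r}$ enters there. Combining the pieces yields $\frobn{\m{L}_0-\m{M}_\star}\le 2\delta_{2r}\frobn{\m{M}_\star}$, which is \eqref{eq:L0gz}.
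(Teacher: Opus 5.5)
Your reduction is correct up to the last step. From $\frobn{\m{L}_0-\m{Y}}\le\frobn{\m{M}_\star-\m{Y}}$ you do get $\frobn{\m{L}_0-\m{M}_\star}^2\le 2\Re\inp{\m{Y}-\m{M}_\star,\m{L}_0-\m{M}_\star}$, and your identity expressing $\inp{\m{Y}-\m{M}_\star,\m{Z}}$ through $\m{B}\cG^*$ is right.

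The gap is in obtaining $\delta_{2r}$. Taking a supremum over arbitrary rank-$2r$ $\m{Z}$ only yields $2\delta_{3r}\frobn{\m{M}_\star}$. That is weaker than the lemma: it would suffice for Theorem~\ref{thm:guarantee1}, but it is not \eqref{eq:L0gz}.

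Your proposed repair does not work. Set $\mathcal{E}:=\cG(\m{B}^H\m{B}-\m{I})\cG^*$. Bounding $\Re\inp{\mathcal{E}\m{M}_\star,\m{L}_0}$ and $\Re\inp{\mathcal{E}\m{M}_\star,\m{M}_\star}$ separately gives only $\frobn{\m{L}_0-\m{M}_\star}^2\le 2\delta_{2r}\frobn{\m{M}_\star}\sbra{\frobn{\m{L}_0}+\frobn{\m{M}_\star}}$, hence $\frobn{\m{L}_0-\m{M}_\star}\lesssim\sqrt{\delta_{2r}}\frobn{\m{M}_\star}$. Splitting destroys the cancellation that makes the right-hand side proportional to $\frobn{\m{L}_0-\m{M}_\star}$. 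This is a loss in the rate, not in a constant, and it would push the requirement in Theorem~\ref{thm:guarantee1} to $\delta_{2r}\lesssim 1/(r\kappa^2)$. So your closing claim of $2\delta_{2r}$ is unsupported as written.

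The missing observation is simple: do not take the supremum. Set $\m{M}=\m{M}_\star/\frobn{\m{M}_\star}$ and $\m{Z}=(\m{L}_0-\m{M}_\star)/\frobn{\m{L}_0-\m{M}_\star}$. Then $\m{M}\pm\m{Z}$ are linear combinations of $\m{L}_0$ and $\m{M}_\star$, so they have rank at most $2r$. Real-part polarization (only $\theta=\pm1$ is needed) together with the LRIP at rank $2r$ gives $\Re\inp{\m{Y}-\m{M}_\star,\m{L}_0-\m{M}_\star}\le\delta_{2r}\frobn{\m{M}_\star}\frobn{\m{L}_0-\m{M}_\star}$, which is exactly \eqref{eq:L0gz}.

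The paper uses the same structural fact in projection form. It compresses onto the linear space $\lbra{\m{Q}\m{Q}^H\m{X}}$ of matrices whose column space lies in $\mathrm{col}(\m{L}_0)+\mathrm{col}(\m{M}_\star)$; all such matrices have rank at most $2r$. Pythagoras and the best-approximation property give $\frobn{\m{L}_0-\m{M}_\star}\le 2\frobn{\cP_{\m{Q}}(\m{Y})-\m{M}_\star}$. It then bounds $\frobn{\cP_{\m{Q}}\mathcal{E}\cP_{\m{Q}}\m{M}_\star}$ via Lemma~\ref{lem:Proj_RIP}.

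The $\cP_U$ route you sketched and then abandoned is essentially this argument. It would have closed once you noted that the range of $\cP_U$ is a linear space of matrices of rank at most $2r$. With the fix, your inner-product version is a valid alternative to the paper's proof, with the same constant.
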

\begin{proof}
	The proof is deferred to Appendix \ref{append:init1}.
\end{proof}

\begin{proof}[Proof of Theorem \ref{thm:guarantee1}]
	It follows from Lemma \ref{lem:init1} that
	$\frobn{\m{L}_0 - \cG\m{z}_{\star}} \le 2 \delta_{2r} \frobn{\cG\m{z}_{\star}} \le 2 \delta_{2r} \sqrt{r} \sigma_{\text{max}}\sbra{\cG\m{z}_{\star}}$. To ensure that the condition $\frobn{\m{L}_0 - \cG\m{z}_{\star}} \le \frac{1}{4} \sigma_{\text{min}}\sbra{\cG\m{z}_{\star}}$ in Lemma~\ref{lem:local} is satisfied, it is sufficient to require $\delta_{2r} \le \frac{1}{8\sqrt{r}\kappa}$. Moreover, Lemma~\ref{lem:local} requires $\delta_{4r} < \frac{1}{8}$. Therefore, it suffices to show that $\delta_{4r} \le \frac{1}{64\sqrt{r}\kappa}$, or equivalently,  $\delta \le \frac{1}{64\sqrt{r}\kappa}$ in Theorem \ref{thm:LRIP}. Putting it into \eqref{eq:m} and setting $\varepsilon = N^{-\sbra{c_s + \ln N}}$ completes the proof.
\end{proof}

\subsection{Proof of Theorem \ref{thm:guarantee2}} \label{sec:recovery2}
In Lemma \ref{lem:init2}, we bound the spectral norm error of the initial estimate in Algorithm \ref{alg:Init-resamp} relative to the ground truth.

\begin{lemma} \label{lem:init2}
	Assume $\m{B}$ has i.i.d.\ entries following $\cC\cN\sbra{0,1/m}$. Then, the initial estimate $\widetilde{\m{L}}_0$ in Algorithm \ref{alg:Init-resamp} satisfies
	\equ{
		\twon{\widetilde{\m{L}}_0 - \cG\m{z}_\star} \le \frac{1}{16} \sigma_{\text{min}} \sbra{ \cG\m{z}_\star },
	} 
	with probability at least $1 - 2 N^{-\sbra{c_s + \ln N}}$, provided $\widetilde{m} \ge C \kappa^2 r \max\lbra{ c_s, \ln N } \ln N$ for some universal constant $C>0$.
\end{lemma}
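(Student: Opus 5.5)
We need to bound $\twon{\widetilde{\m{L}}_0 - \cG\m{z}_\star}$ in spectral norm, where $\widetilde{\m{L}}_0 = \cT_r(\cG\m{B}_0^H\m{y}_0)$ and $\m{y}_0 = \m{B}_0\m{z}_\star$. The plan is to use a spectral-norm concentration argument for the single matrix $\cG(\m{B}_0^H\m{B}_0 - \m{I})\m{z}_\star$, exploiting the fact that $\m{z}_\star$ is fixed. This avoids the uniform LRIP route that caused the $\sqrt{r}$ loss in Lemma \ref{lem:init1}.

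First, by the best rank-$r$ approximation property, and since $\cG\m{z}_\star$ has rank $r$,
\[
\twon{\widetilde{\m{L}}_0 - \cG\m{z}_\star} \le \twon{\widetilde{\m{L}}_0 - \cG\m{B}_0^H\m{y}_0} + \twon{\cG\m{B}_0^H\m{y}_0-\cG\m{z}_\star} \le 2\twon{\cG(\m{B}_0^H\m{B}_0-\m{I})\m{z}_\star}.
\]
It therefore suffices to show $\twon{\cG(\m{B}_0^H\m{B}_0-\m{I})\m{z}_\star} \le \frac{1}{32}\sigma_{\min}(\cG\m{z}_\star)$. Since $\twon{\m{z}_\star}=\frobn{\cG\m{z}_\star}\le\sqrt r\,\kappa\,\sigma_{\min}(\cG\m{z}_\star)$, this follows from the bound
\[
\twon{\cG(\m{B}_0^H\m{B}_0-\m{I})\m{z}_\star} \le \frac{c}{\sqrt r\,\kappa}\twon{\m{z}_\star}.
\]

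Second, write $\m{B}_0=[\m{b}_1,\dots,\m{b}_{\widetilde m}]^H$ with rows $\m{b}_i^H$ i.i.d.\ $\ccn(0,1/\widetilde m)$ after rescaling. Then
\[
(\m{B}_0^H\m{B}_0-\m{I})\m{z}_\star=\sum_i\sbra{\m{b}_i\m{b}_i^H\m{z}_\star-\widetilde m^{-1}\m{z}_\star}.
\]
By rotational invariance, decompose $\m{b}_i$ along $\m{u}=\m{z}_\star/\twon{\m{z}_\star}$ and its orthogonal complement, so that $\m{b}_i = \beta_i\m{u}+\m{P}^\perp\m{b}_i$ with $\beta_i=\m{u}^H\m{b}_i$. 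The error then splits into two terms:
\begin{itemize}
\item[(a)] $\twon{\m{z}_\star}\sum_i(|\beta_i|^2-\widetilde m^{-1})\,\m{u}$, whose contribution is at most $\twon{\m{z}_\star}\twon{\cG\m{u}}\,\bigl|\sum_i|\beta_i|^2-1\bigr|\lesssim \twon{\m{z}_\star}\sqrt{\ln N/\widetilde m}$ by chi-square concentration;
\item[(b)] $\twon{\m{z}_\star}\sum_i\overline{\beta_i}\,\m{P}^\perp\m{b}_i$. Conditioned on $\{\beta_i\}$, this equals $\twon{\m{z}_\star}\bigl(\sum_i|\beta_i|^2\bigr)^{1/2}\widetilde m^{-1/2}\,\m{P}^\perp\m{g}$ with $\m{g}\sim\ccn(0,\m{I})$ independent of the $\beta_i$.
\end{itemize}
For (b), Lemma \ref{lem:norm} together with Gaussian concentration of the Lipschitz function $\m{g}\mapsto\twon{\cG\m{g}}$ (Lipschitz constant $1$, since $\cG$ is an isometry) gives $\twon{\cG\m{P}^\perp\m{g}}\le\twon{\cG\m{g}}+|\m{u}^H\m{g}|\lesssim\sqrt{\max\{c_s,\ln N\}\ln N}$ with probability $1-N^{-(c_s+\ln N)}$. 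Hence (b) is bounded by $C\twon{\m{z}_\star}\sqrt{\max\{c_s,\ln N\}\ln N/\widetilde m}$.

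Third, combine the two bounds. Taking $\widetilde m\ge C\kappa^2 r\max\{c_s,\ln N\}\ln N$ makes the total at most $\frac{c}{\sqrt r\kappa}\twon{\m{z}_\star}$. A union bound over the two events gives the stated probability $1-2N^{-(c_s+\ln N)}$.

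The main obstacle is the second step. The decomposition must be set up so that the heavy term becomes an essentially Gaussian Hankel matrix, making Lemma \ref{lem:norm} applicable with tail bounds strong enough to reach probability $N^{-(c_s+\ln N)}$. This requires checking that the deviation $t\asymp\sqrt{(c_s+\ln N)\ln N}$ in Gaussian concentration matches the expectation bound, so that the log factors combine into $\max\{c_s,\ln N\}\ln N$.
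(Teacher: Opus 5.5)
Your proposal is correct and follows essentially the paper's route. The paper first proves Lemma~\ref{lem:KeySpectralNormBound} for a general rank-$r$ matrix $\m{X}$ independent of $\m{B}$: it splits each $\m{b}_i$ along $\cG^*\m{X}$ and its orthogonal complement, bounds the parallel part by Bernstein's inequality, and bounds the orthogonal part via its conditional Gaussian law, Lemma~\ref{lem:norm}, and Gaussian concentration. It then applies that lemma with $\m{X}=\cG\m{z}_\star$ and $c=1/(32\kappa)$, which is exactly your argument specialized to $\cG^*\m{X}=\m{z}_\star$. Your triangle inequality $\twon{\cG\m{P}^\perp\m{g}}\le\twon{\cG\m{g}}+|\m{u}^H\m{g}|$ replaces the paper's conditional-Jensen step and is equally valid.

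One small fix is needed. To reach failure probability $N^{-(c_s+\ln N)}$ in term (a), the chi-square deviation must be $\sqrt{(c_s+\ln N)\ln N/\widetilde m}$, not $\sqrt{\ln N/\widetilde m}$. Also, the bound $(\sum_i|\beta_i|^2)^{1/2}\le\sqrt2$ that you implicitly use in (b) should be taken on that same event. Both adjustments are absorbed by your sample condition.
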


\begin{proof}
	The proof is deferred to Appendix \ref{subsec:init2}.
\end{proof}

Based on Lemma \ref{lem:init2}, Lemma \ref{lem:contraction} establishes a spectral norm upper bound on the estimation error of the output of Algorithm \ref{alg:Init-resamp}.

\begin{lemma} \label{lem:contraction}
	Assume $\m{B}$ has i.i.d.\ entries following $\cC\cN\sbra{0,1/m}$. Then, the output $\widetilde{\m{L}}_T$ of Algorithm \ref{alg:Init-resamp} satisfies
	\equ{
		\twon{\widetilde{\m{L}}_T - \cG\m{z}_\star} 
		\le 2^{-T-4} \sigma_{\text{min}} \sbra{\cG\m{z}_\star}, \label{eq:LTgz}
	}
	with probability at least $1 - \sbra{2+T} N^{-\sbra{c_s + \ln N}}$, provided $\widetilde{m} \ge C \kappa^2 r \max \lbra{c_s, \ln N} \ln N$ for some universal constant $C>0$.
\end{lemma}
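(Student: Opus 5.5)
The plan is induction on $t$. The claim is that each resampled FIHT step halves the spectral-norm error:
\[
\twon{\widetilde{\m{L}}_{t+1}-\cG\m{z}_\star}\le \tfrac12\twon{\widetilde{\m{L}}_t-\cG\m{z}_\star}
\]
with probability at least $1-N^{-(c_s+\ln N)}$, conditional on $\twon{\widetilde{\m{L}}_t-\cG\m{z}_\star}\le \frac1{16}\sigma_{\min}(\cG\m{z}_\star)$. The base case is Lemma~\ref{lem:init2}, which costs probability $2N^{-(c_s+\ln N)}$. Iterating $T$ times gives the factor $2^{-T}\cdot\frac1{16}=2^{-T-4}$, and a union bound over the $T$ steps gives total failure probability $(2+T)N^{-(c_s+\ln N)}$, as stated. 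The key structural fact is that $\m{B}_{t+1}$ is independent of $\widetilde{\m{L}}_t$, since it uses the fresh block $\Omega_{t+1}$. So in step $t$ we may condition on $\widetilde{\m{L}}_t$ and its tangent space $\widetilde T_t$, and treat both as fixed.

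For one step, write $\m{E}_t:=\widetilde{\m{L}}_t-\cG\m{z}_\star$ and $\m{e}_t:=\widetilde{\m{z}}_t-\m{z}_\star=\cG^*\m{E}_t$. Since $\m{y}_{t+1}=\m{B}_{t+1}\m{z}_\star$,
\[
\widetilde{\m{W}}_t-\cG\m{z}_\star=(\cP_{\widetilde T_t}-\cI)\cG\m{z}_\star+\cP_{\widetilde T_t}\cG(\cI-\m{B}_{t+1}^H\m{B}_{t+1})\cG^*\m{E}_t .
\]
\begin{itemize}
\item \emph{First term.} The standard tangent-space bound for rank-$r$ matrices gives $(\cI-\cP_{\widetilde T_t})\cG\m{z}_\star=(\cI-\cP_{\widetilde T_t})(-\m{E}_t)$, and this is at most $\twon{\m{E}_t}^2/\sigma_{\min}(\cG\m{z}_\star)\le\frac1{16}\twon{\m{E}_t}$ in spectral norm.
\item \emph{Second term.} I would bound it in spectral norm using Lemma~\ref{lem:KeySpectralNormBound}, the spectral concentration bound referred to in Section~\ref{subsec:init}. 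The vector $\m{e}_t$ is fixed given the past, so the matrix $\cG(\cI-\m{B}_{t+1}^H\m{B}_{t+1})\m{e}_t$ is a centered random Hankel matrix. Its spectral norm should be of order $\sqrt{\max\{c_s,\ln N\}\ln N/\widetilde m}\,\twon{\m{e}_t}$ with high probability. This uses the Hankel norm estimate of Lemma~\ref{lem:norm}, applied through a Gaussian decomposition of $\m{B}_{t+1}^H\m{B}_{t+1}\m{e}_t$ along $\m{e}_t$ and its orthogonal complement.
\item \emph{Size of $\m{e}_t$.} We have $\twon{\m{e}_t}\le\frobn{\m{E}_t}\le\sqrt{2r}\twon{\m{E}_t}$, because $\m{E}_t$ has rank at most $2r$.
\item \emph{Effect of $\cP_{\widetilde T_t}$.} This projection at most doubles spectral norms.
\end{itemize}
Combining, the second term is at most $C'\sqrt{r\max\{c_s,\ln N\}\ln N/\widetilde m}\,\twon{\m{E}_t}\le\frac1{16}\twon{\m{E}_t}$, once $\widetilde m\ge C\kappa^2 r\max\{c_s,\ln N\}\ln N$. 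The factor $\kappa^2$ is not needed here, but it is harmless.

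Then $\twon{\widetilde{\m{W}}_t-\cG\m{z}_\star}\le\frac18\twon{\m{E}_t}$. Since $\cG\m{z}_\star$ has rank $r$, Weyl's inequality gives $\twon{\cT_r(\widetilde{\m{W}}_t)-\widetilde{\m{W}}_t}=\sigma_{r+1}(\widetilde{\m{W}}_t)\le\twon{\widetilde{\m{W}}_t-\cG\m{z}_\star}$. Hence $\twon{\m{E}_{t+1}}\le 2\cdot\frac18\twon{\m{E}_t}\le\frac12\twon{\m{E}_t}$, which preserves the inductive hypothesis.

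The main obstacle is the spectral concentration of $\cG(\cI-\m{B}^H\m{B})\m{e}$ for a \emph{fixed} $\m{e}$, with only $\sqrt{\ln}$-type dependence and no factor $r$. This is exactly where resampling replaces the LRIP-based Frobenius bound that costs the extra $\sqrt r$ in Theorem~\ref{thm:guarantee1}. If Lemma~\ref{lem:KeySpectralNormBound} is not available in the required form, I would prove it as follows. Write $\m{B}^H\m{B}\m{e}-\m{e}$ as $(\twon{\m{B}\m{e}}^2/\twon{\m{e}}^2-1)\m{e}$, which is small by $\chi^2$ concentration, plus a term $\m{B}_\perp^H\m{B}\m{e}$ that is conditionally Gaussian, independent of $\m{B}\m{e}$, with covariance $\frac{\twon{\m{B}\m{e}}^2}{m}\m{P}_{\m{e}^\perp}$. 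Then I would apply Lemma~\ref{lem:norm} together with Gaussian concentration of the Lipschitz map $\m{g}\mapsto\twon{\cG\m{g}}$ to reach the tail $N^{-(c_s+\ln N)}$.
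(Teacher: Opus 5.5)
Your proposal is correct and follows essentially the same route as the paper. You split $\widetilde{\m{W}}_t-\cG\m{z}_\star$ into the tangent-space term, bounded by the spectral version of Lemma~\ref{lem:wei}, and the fresh-batch term, bounded via Lemma~\ref{lem:KeySpectralNormBound} for the rank-$2r$ error independent of $\m{B}_{t+1}$ (with the factor $2$ from $\cP_{\widetilde T_t}$); then you use Weyl's inequality for $\cT_r$, induct from Lemma~\ref{lem:init2}, and take the union bound giving $(2+T)N^{-(c_s+\ln N)}$, exactly as the paper does (your observation that $\kappa^2$ is only needed for the base case is also consistent with it).
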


\begin{proof}
	The proof is deferred to Appendix \ref{subsec:contraction}.
\end{proof}

\begin{proof}[Proof of Theorem \ref{thm:guarantee2}]
	We have
	\equ{
		\begin{split}
			\frobn{\m{L}_0 - \cG\m{z}_\star} 
			& = \frobn{\widetilde{\m{L}}_T - \cG\m{z}_\star} \\
			& \le \sqrt{2r} \twon{\widetilde{\m{L}}_T - \cG\m{z}_\star} \\
			& \le \sqrt{2r} 2^{ -\lceil \ln r \rceil - 3 } \sigma_{\text{min}} \sbra{\cG\m{z}_\star} \\
			& \le \frac{\sqrt{2}}{8} r^{\frac{1}{2} + \ln \frac{1}{2} }  \sigma_{\text{min}} \sbra{\cG\m{z}_\star} \\
			& \le \frac{1}{4} \sigma_{\text{min}} \sbra{\cG\m{z}_\star},
		\end{split} \label{eq:L0gz2}
	}
	where the second inequality follows from Lemma \ref{lem:contraction} and $T \ge \lceil \ln r \rceil - 1 $.
	For $r\ge2$, $T+1=\lceil\ln r\rceil\le2\ln r$ and $m=(T+1)\widetilde m$, so condition~\eqref{eq:m_guarantee2} ensures the per-batch sample requirements in Lemmas~\ref{lem:init2} and~\ref{lem:contraction}.
	Under the same sample condition, Theorem~\ref{thm:LRIP}, applied at rank $4r$ with $\delta=1/16$ and $\varepsilon=N^{-(c_s+\ln N)}$, gives $\delta_{4r}\le1/16$ with probability at least $1-N^{-(c_s+\ln N)}$.
	On the intersection of this LRIP event and the event in Lemma~\ref{lem:contraction}, \eqref{eq:L0gz2} and Lemma~\ref{lem:local} yield \eqref{eq:zz}.
	A union bound gives failure probability at most $(T+3)N^{-(c_s+\ln N)}\le(3+\ln r)N^{-(c_s+\ln N)}$.
	For $r=1$, take $T=0$. The stated sample condition ensures the same initialization and LRIP events, so the argument above applies with failure probability at most $3N^{-(c_s+\ln N)}$.
\end{proof}

\section{Conclusion} \label{sec:conclusion}
In this paper, we studied spectral compressed sensing from linear measurements by reformulating it as the recovery of a low-rank Hankel matrix. We proposed the LRIP as a general theoretical tool for analyzing Hankel-structured recovery under Gaussian measurements. Based on this framework, we introduced the FIHT algorithm, which achieves exact recovery with near-optimal sample complexity of order $r \ln r$, improving from order $r^2$ in previous nonconvex Hankel methods. Numerical experiments demonstrated that FIHT achieves competitive phase transitions while being significantly more computationally efficient than existing convex optimization methods. 

\section*{Appendix}
\appendix 
\section{Proof of Lemma \ref{lem:norm}} \label{append:norm}
Our proof relies on the following norm bound for random series with matrix coefficients.
\begin{lemma} [{\cite[Theorem 4.1.1]{tropp2015introduction}}] \label{lem:trop}
	Let $\m{A}_1,\ldots,\m{A}_K \in \bC^{n_1 \times n_2}$. Assume that $g_1,\ldots,g_K$ are i.i.d.\ $\cC\cN \sbra{0,1}$.  
	Introduce the matrix Gaussian series $\m{Z} = \sum_{k=1}^K g_k \m{A}_k$.
	Let $\sigma^2\sbra{\m{Z}}$ be the matrix variance statistic of the sum:	
	\equ{
		\sigma^2\sbra{\m{Z}} := \max\lbra{ \twon{\bE \mbra{ \m{Z} \m{Z}^H }}, \twon{\bE \mbra{ \m{Z}^H \m{Z} }} }.
	}
	Then,
	\equ{
		\bE \twon{\m{Z}} \le \sqrt{2 \sigma^2\sbra{\m{Z}} \ln \sbra{n_1 + n_2} }.
	}
\end{lemma}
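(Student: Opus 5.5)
Although Lemma~\ref{lem:trop} is quoted from \cite{tropp2015introduction}, I would reprove it by the matrix Laplace transform method. The plan has three parts: reduce to a real Hermitian Gaussian series, bound its largest eigenvalue via the subadditivity of matrix cumulant generating functions, and optimize a free parameter.

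\emph{Step 1 (reduction).} First, pass to the Hermitian dilation
\[
\cH(\m{Z}) := \begin{bmatrix} \m{0} & \m{Z} \\ \m{Z}^H & \m{0}\end{bmatrix} \in \bC^{d\times d}, \qquad d := n_1+n_2 .
\]
Its spectrum is symmetric about zero and equals $\pm$ the singular values of $\m{Z}$, padded with zeros. Hence $\twon{\m{Z}} = \lambda_{\max}(\cH(\m{Z}))$. Moreover, $\cH(\m{Z})^2 = \mathrm{diag}(\m{Z}\m{Z}^H, \m{Z}^H\m{Z})$.

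Next, remove the complex Gaussians. Write $g_k = (a_k + \mathrm{i} b_k)/\sqrt{2}$ with $a_k,b_k$ i.i.d.\ real $\cN(0,1)$. Then
\[
\cH(\m{Z}) = \sum_k a_k \m{H}_k + \sum_k b_k \m{H}_k', \qquad \m{H}_k := \cH(\m{A}_k/\sqrt2), \quad \m{H}_k' := \cH(\mathrm{i}\m{A}_k/\sqrt2).
\]
This is a real Gaussian series with Hermitian coefficients, and
\[
\sum_k \sbra{\m{H}_k^2 + \m{H}_k'^2} = \mathrm{diag}\Big(\sum_k \m{A}_k\m{A}_k^H,\ \sum_k \m{A}_k^H\m{A}_k\Big).
\]
This block-diagonal matrix has spectral norm exactly $\sigma^2(\m{Z})$, since $\bE[\m{Z}\m{Z}^H] = \sum_k \m{A}_k\m{A}_k^H$ and similarly for $\m{Z}^H\m{Z}$.

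\emph{Step 2 (Laplace transform bound).} For a random Hermitian matrix $\m{Y}$ and any $\theta>0$, I would use
\[
\bE\,\lambda_{\max}(\m{Y}) \le \frac{1}{\theta}\ln \bE\,\mathrm{tr}\, e^{\theta \m{Y}},
\]
which follows from Jensen's inequality and $e^{\theta\lambda_{\max}(\m{Y})} \le \mathrm{tr}\, e^{\theta\m{Y}}$. The key ingredient is Lieb's concavity theorem. Iterating it over the independent summands gives
\[
\bE\,\mathrm{tr}\exp\Big(\sum_j \theta \gamma_j \m{M}_j\Big) \le \mathrm{tr}\exp\Big(\sum_j \ln \bE\, e^{\theta\gamma_j \m{M}_j}\Big).
\]
For real standard Gaussian $\gamma_j$ one has $\bE\, e^{\theta\gamma_j \m{M}_j} = e^{\theta^2 \m{M}_j^2/2}$. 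Therefore
\[
\bE\,\mathrm{tr}\, e^{\theta\cH(\m{Z})} \le \mathrm{tr}\exp\big(\tfrac{\theta^2}{2}\textstyle\sum_j \m{M}_j^2\big) \le d\, e^{\theta^2\sigma^2/2}.
\]

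\emph{Step 3 (optimization).} Combining the two steps,
\[
\bE\twon{\m{Z}} \le \frac{\ln d}{\theta} + \frac{\theta\sigma^2}{2}.
\]
Choosing $\theta = \sqrt{2\ln d/\sigma^2}$ gives $\sqrt{2\sigma^2\ln(n_1+n_2)}$, as claimed.

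The main obstacle is the subadditivity step. It needs Lieb's theorem, i.e.\ the concavity of $\m{A}\mapsto \mathrm{tr}\exp(\m{H}+\ln \m{A})$, applied conditionally one summand at a time. Everything else is bookkeeping. The only other point to check carefully is that the complex-to-real splitting preserves the variance statistic exactly, so that no extra constant appears.
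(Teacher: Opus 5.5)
Your proof is correct, and it is the standard proof behind the paper's citation. The paper gives no proof of its own; Tropp proves Theorem~4.1.1 with the same ingredients you use: the Hermitian dilation, the bound $\bE\lambda_{\max}(\m{Y})\le\theta^{-1}\ln\bE\,\mathrm{tr}\,e^{\theta\m{Y}}$, Lieb-based subadditivity of matrix cumulant generating functions with $\bE e^{\theta\gamma\m{M}}=e^{\theta^2\m{M}^2/2}$, and the choice $\theta=\sqrt{2\ln(n_1+n_2)/\sigma^2}$.

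Your real/imaginary splitting $g_k=(a_k+\mathrm{i}b_k)/\sqrt2$ is a worthwhile addition. Tropp states the result for real standard normal coefficients, whereas the paper uses $\cC\cN(0,1)$. Your check that the $2K$ coefficients $\m{A}_k/\sqrt2$ and $\mathrm{i}\m{A}_k/\sqrt2$ reproduce $\sum_k\m{A}_k\m{A}_k^H$ and $\sum_k\m{A}_k^H\m{A}_k$ exactly is what justifies the paper's complex form with no loss in the constant.
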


\begin{proof}[Proof of Lemma \ref{lem:norm}]
	Let $\m{e}_a$ be the $a$-th canonical basis of $\bR^N$ for $a=0,\ldots,N-1$. Denote
	\equ{
		\m{H}_a := \frac{1}{\sqrt{w_a}} \cH \m{e}_a \in \bC^{n_1 \times n_2}.
	}
	Recall that $w_a = \#\lbra{\sbra{i,j} | i+j = a, \ i\in \mbra{0,n_1-1}, \ j\in \mbra{0,n_2-1}, \ i,j\in \bZ}$. It follows from the definition of $\cG$ that $\cG \m{g}$ can be represented as a matrix Gaussian series as follows $$\cG \m{g} = \sum^{N-1}_{a=0} g_a \m{H}_a.$$ To apply Lemma \ref{lem:trop}, we need to bound the matrix variance statistic $\sigma^2\sbra{\cG\m{g}}$. Specifically,
	\equ{
		\begin{split}
			\bE \mbra{ \cG\m{g} \sbra{\cG\m{g}}^H } 
			= \sum^{N-1}_{a=0} \sum^{N-1}_{b=0} \bE \mbra{g_a \overline{g_b}} \m{H}_a \m{H}_b^T
			= \sum^{N-1}_{a=0} \sum^{N-1}_{b=0} \delta_{ab} \m{H}_a \m{H}_b^T
			= \sum^{N-1}_{a=0} \m{H}_a\m{H}_a^T
			= \sum^{N-1}_{a=0} \frac{1}{w_a} \cH \m{e}_a \sbra{\cH \m{e}_a}^T,
		\end{split} \label{eq:Egg}
	}
	where $\delta_{ab}$ denotes the Kronecker delta and the second equality holds since $\lbra{g_a}$ are i.i.d.\ $\cC\cN\sbra{0,1}$.
	Denote $S(a) := \lbra{i\in \bZ_{\ge0} : i \in [0,n_1 -1], a-i \in [0,n_2-1]}$. It follows that
	\equ{
		\cH \m{e}_a = \sum_{ \substack{i+j=a \\ i\in [0,n_1-1], j \in [0,n_2-1] } } \m{e}_i \m{e}_j^T = \sum_{i \in S(a) } \m{e}_i \m{e}_{a-i}^T.
	}
	Then, we arrive at
	\equ{
		\begin{split}
			\bE \mbra{ \cG\m{g} \sbra{\cG\m{g}}^H } 
			& = \sum^{N-1}_{a=0} \frac{1}{w_a} \sum_{i \in S(a) } \m{e}_i \m{e}_{a-i}^T \sum_{j \in S(a) } \m{e}_{a-j} \m{e}_j^T \\
			& = \sum^{N-1}_{a=0} \frac{1}{w_a} \sum_{i \in S(a) } \sum_{j \in S(a) } \m{e}_i \delta_{ij} \m{e}_j^T \\
			& = \sum^{N-1}_{a=0} \frac{1}{w_a} \sum_{i \in S(a) } \m{e}_i \m{e}_i^T \\
			& = \sum^{n_1-1}_{i=0} \sbra{ \sum^{i+n_2-1}_{a=i} \frac{1}{w_a} } \m{e}_i \m{e}_i^T,
		\end{split}
	}
	which is a diagonal matrix. Consequently, we have
	\equ{
		\begin{split}
			\twon{ \bE \mbra{ \cG\m{g} \sbra{\cG\m{g}}^H } } 
			& = \max_{i\in [0,n_1-1]} \sum^{i+n_2-1}_{a=i} \frac{1}{w_a} \\
			& = \sum^{n_2-1}_{a=0} \frac{1}{w_a} \\
			& = 1 + \frac{1}{2} + \ldots + \frac{1}{n_1} + \underbrace{\frac{1}{n_1} + \ldots + \frac{1}{n_1}}_{n_2 - n_1} \\
			& \le \ln \sbra{n_1} + 1 + \frac{n_2-n_1}{n_1} \\
			& = \ln \sbra{n_1} + \frac{n_2}{n_1},
		\end{split} \label{eq:gg1}
	}
	where we use $n_1 \le n_2$ and the fact
	\equ{
		w_i =
		\begin{cases}
			i+1, & \text{if } i \in [0,n_1-1], \\
			n_1, & \text{if } i \in [n_1,n_2-1], \\
			n_1+n_2-1-i, & \text{if } i \in [n_2, n_1+n_2-2].
		\end{cases}
	}
	
	Similarly, we have
	\equ{
		\begin{split}
			\bE \mbra{ \sbra{\cG\m{g}}^H \cG\m{g} } & = \sum^{n_2-1}_{i=0} \sbra{ \sum^{i+n_1-1}_{a=i} \frac{1}{w_a} } \m{e}_i \m{e}_i^T,
		\end{split}
	}
	and thus
	\equ{
		\twon{\bE \mbra{ \sbra{\cG\m{g}}^H \cG\m{g} }} =  \max_{i\in [0,n_2-1]} \sum^{i+n_1-1}_{a=i} \frac{1}{w_a} = \sum^{n_1-1}_{a=0} \frac{1}{w_a} = 1 + \frac{1}{2} + \ldots + \frac{1}{n_1} \le \ln \sbra{n_1} + 1. \label{eq:gg2}
	}
	
	Combining \eqref{eq:gg1} and \eqref{eq:gg2} yields that $\sigma^2\sbra{\cG\m{g}} \le \ln \sbra{n_1} + \frac{n_2}{n_1}$.
	Applying Lemma \ref{lem:trop}, for $N>1$, we have
	\equ{
		\bE \twon{\cG\m{g}} \le \sqrt{2  \sigma^2\sbra{\cG\m{g}} \ln \sbra{n_1+n_2} } \le \sqrt{ 2 \sbra{ \frac{n_2}{n_1} + \ln n_1 } \ln \sbra{N+1} } \le 2 \sqrt{ \sbra{ c_s + \ln N } \ln N }.
	}
\end{proof}

\section{Implementation of FIHT} \label{append:FIHT}
For completeness, we present the implementation details of FIHT, which follow those in \cite{cai2019fast}. 
Assume the rank-$r$ $n_1\times n_2$ matrix $\m{L}_t$ is stored by its reduced SVD $\m{L}_t = \m{U}_t \m{\Sigma}_t \m{V}_t^H$, where $\m{U}_t \in \bC^{n_1 \times r}$ and $\m{V}_t \in \bC^{n_2 \times r}$. The tangent space $T_t$ at $\m{L}_t$ is given by
\equ{
	T_t := \lbra{\m{U}_t \m{E}^H + \m{F} \m{V}_t^H: \m{E}\in \bC^{n_2 \times r}, \m{F} \in \bC^{n_1\times r} }. \label{eq:Tangentspace}
}
Denote the Hankel matrix $\m{H}_t := \cG \sbra{ \m{z}_t - \m{g}_t }$. The matrix $\m{W}_t$ is stored by the following decomposition
\equ{
	\begin{split}
		\m{W}_t = \cP_{T_t} \m{H}_t 
		& = \m{U}_t \m{U}_t^H \m{H}_t + \m{H}_t \m{V}_t \m{V}_t^H - \m{U}_t \m{U}_t^H \m{H}_t \m{V}_t \m{V}_t^H \\
		& = \m{U}_t \underbrace{ \m{U}_t^H \m{H}_t \m{V}_t }_{\m{C} \in \bC^{r\times r} } \m{V}_t^H + \m{U}_t \underbrace{ \m{U}_t^H \m{H}_t \sbra{ \m{I} - \m{V}_t \m{V}_t^H} }_{\m{X}^H \in \bC^{r\times n_2} }  +  \underbrace{ \sbra{ \m{I} - \m{U}_t \m{U}_t^H } \m{H}_t \m{V}_t }_{\m{Y} \in \bC^{n_1\times r}} \m{V}_t^H  \\
		& = \m{U}_t \m{C} \m{V}_t^H + \m{U}_t \m{X}^H + \m{Y} \m{V}_t^H \\
		& \overset{(a)}{=} \m{U}_t \m{C} \m{V}_t^H + \m{U}_t \m{R}_1^H \m{Q}_1^H + \m{Q}_2 \m{R}_2 \m{V}_t^H \\
		& = \begin{bmatrix} \m{U}_t & \m{Q}_2 \end{bmatrix} \begin{bmatrix} \m{C} & \m{R}_1^H \\ \m{R}_2 & \m{0} \end{bmatrix} \begin{bmatrix} \m{V}_t & \m{Q}_1 \end{bmatrix}^H \\
		& \overset{(b)}{=} \begin{bmatrix} \m{U}_t & \m{Q}_2 \end{bmatrix} \m{U}_c \m{\Sigma}_c \m{V}_c^H \begin{bmatrix} \m{V}_t & \m{Q}_1 \end{bmatrix}^H \\
		& = \sbra{ \begin{bmatrix} \m{U}_t & \m{Q}_2 \end{bmatrix} \m{U}_c } \m{\Sigma}_c \sbra{ \begin{bmatrix} \m{V}_t & \m{Q}_1 \end{bmatrix} \m{V}_c }^H
	\end{split} \label{eq:Wt}
}
where in step $(a)$ we let $\m{X} = \m{Q}_1\m{R}_1$ and $\m{Y} = \m{Q}_2 \m{R}_2$ be the QR factorizations of $\m{X}$ and $\m{Y}$, respectively, in step $(b)$ we let the SVD of the $2r\times 2r$ matrix $\begin{bmatrix} \m{C} & \m{R}_1^H \\ \m{R}_2 & \m{0} \end{bmatrix} $ is $ \m{U}_c \m{\Sigma}_c \m{V}_c^H$, and the last line is the SVD of $\m{W}_t$ since $\m{V}_t^H \m{Q}_1 = \m{0}$ and $\m{U}_t^H \m{Q}_2 = \m{0}$. The Hankel matrix-matrix multiplications $\m{H}_t \m{V}_t$ and $\m{U}_t^H\m{H}_t$ in $\m{C}$, $\m{X}$ and $\m{Y}$ can be computed via the FFT without forming $\m{H}_t$ explicitly, requiring $\cO\sbra{ r N \log N }$ flops. The additional matrix multiplications cost $\cO\sbra{r^2 N}$ flops and the SVD of the $2r \times 2r$ matrix requires $\cO\sbra{r^3}$ flops. In addition, the $\cP_{T_t} \cG \m{g}_t$ in the stepsize $\alpha_t$ can be efficiently stored and computed in the same manner as in \eqref{eq:Wt}.

\section{Proof of Lemma \ref{lem:local}} \label{subsec:local}
The following lemmas will be used in the proof.

\begin{lemma} \label{lem:I-PT}
	Let $T_t$ be the tangent space of a rank-$r_1$ matrix manifold. Let $\m{X}$ be another rank-$r_2$ matrix. Then, 
	\equ{
		\frobn{\cP_{T_t} \cG \sbra{ \m{B}^H \m{B} - \m{I} } \cG^*\m{X} } \le \delta_{2r_1+r_2} \frobn{\m{X}}.
	}
\end{lemma}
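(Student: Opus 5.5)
We prove the bound by duality: test $\cP_{T_t}\cG(\m{B}^H\m{B}-\m{I})\cG^*\m{X}$ against a unit-norm matrix in $T_t$, which turns it into a bilinear form in the LRIP quadratic form, and then polarize. Since $\cP_{T_t}$ is an orthogonal projector onto $T_t$,
\[
\frobn{\cP_{T_t} \cG ( \m{B}^H \m{B} - \m{I} ) \cG^*\m{X}} = \sup_{\m{Y}\in T_t,\ \frobn{\m{Y}}=1} \abs{\inp{\m{Y}, \cG(\m{B}^H\m{B}-\m{I})\cG^*\m{X}}}.
\]
Moving $\cG$ to the other side gives
\[
\inp{\m{Y}, \cG(\m{B}^H\m{B}-\m{I})\cG^*\m{X}} = \inp{\m{B}\cG^*\m{Y},\m{B}\cG^*\m{X}} - \inp{\cG^*\m{Y},\cG^*\m{X}}.
\]
Every element of $T_t$ has the form $\m{U}_t\m{E}^H+\m{F}\m{V}_t^H$, so it has rank at most $2r_1$. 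Hence $\m{Y}$ has rank at most $2r_1$, and for any scalar $\alpha$, $\m{Y}+\alpha\m{X}$ has rank at most $2r_1+r_2$.

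Next, set $\m{Z}=\m{X}/\frobn{\m{X}}$ (assuming $\m{X}\neq\m{0}$) and define the Hermitian form
\[
q(\m{M}) := \twon{\m{B}\cG^*\m{M}}^2-\twon{\cG^*\m{M}}^2 .
\]
By Definition \ref{def:LRIP}, $\abs{q(\m{M})}\le\delta_{2r_1+r_2}\frobn{\m{M}}^2$ whenever $\rank(\m{M})\le 2r_1+r_2$. The complex polarization identity expresses the bilinear quantity as
\[
\tfrac14\sum_{k=0}^{3} \mathrm{i}^k\, q(\m{Y}+\mathrm{i}^k\m{Z}),
\]
and each $\m{Y}+\mathrm{i}^k\m{Z}$ has rank at most $2r_1+r_2$.

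Applying the LRIP bound to each of the four terms naively loses constants. To recover the sharp constant $\delta_{2r_1+r_2}$, the standard trick is to first multiply $\m{Y}$ by a unimodular phase so that the inner product is real and nonnegative. Then only the real part matters, and it equals $\tfrac14\big(q(\m{Y}+\m{Z})-q(\m{Y}-\m{Z})\big)$. This gives
\[
\abs{\Re(\cdot)} \le \tfrac{\delta}{4}\left(\frobn{\m{Y}+\m{Z}}^2+\frobn{\m{Y}-\m{Z}}^2\right) = \tfrac{\delta}{2}\left(\frobn{\m{Y}}^2+\frobn{\m{Z}}^2\right)=\delta,
\]
where $\delta=\delta_{2r_1+r_2}$. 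Taking the supremum over $\m{Y}$ and rescaling by $\frobn{\m{X}}$ yields the claim.

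The main obstacle is the bookkeeping in this step. We need $\m{Y}\pm\m{Z}$ to stay within the admissible rank after the phase rotation of $\m{Y}$. This holds because $e^{\mathrm{i}\theta}\m{Y}$ still lies in $T_t$, so it still has rank at most $2r_1$. We also need $\m{Y}$ and $\m{Z}$ normalized to unit Frobenius norm so that the parallelogram law gives exactly $2$. Neither step uses any Hankel structure, since $\cG^*$ enters only through $q$.
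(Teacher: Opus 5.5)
Your proposal is correct and follows essentially the same route as the paper: duality against unit-norm elements of $T_t$, the rank bound $2r_1+r_2$, a unimodular phase to reduce to the real part, the real polarization identity, the LRIP, and the parallelogram law. The only cosmetic difference is that you take the supremum directly over unit-norm $\m{Y}\in T_t$, whereas the paper takes it over all unit-norm $\m{Z}$ and normalizes $\cP_{T_t}\m{Z}$ by $\beta=\frobn{\cP_{T_t}\m{Z}}$.
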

\begin{proof}
	The proof is deferred to Appendix \ref{append:I-PT}.
\end{proof}

\begin{lemma}[{\cite[Lemma 4.1]{wei2016guarantees}\cite[Lemma 12]{cai2025fast}}] \label{lem:wei}
	Let $\m{L}_t$ be a rank-$r$ matrix and $T_t$ be the tangent space of the rank-$r$ matrix manifold at $\m{L}_t$. Let $\m{X}$ be another rank-$r$ matrix. Then,
	\equ{
		\frobn{ \sbra{\cI - \cP_{T_t}} \m{X} } \le \frac{\frobn{\m{L}_t - \m{X}}^2}{\sigma_{\text{min}}\sbra{ \m{X} }}, \qquad \twon{ \sbra{\cI - \cP_{T_t}}  \m{X} } \le \frac{\twon{\m{L}_t - \m{X} }^2}{\sigma_{\text{min}}\sbra{ \m{X} }}.
	}
\end{lemma}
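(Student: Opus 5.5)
The statement is Lemma \ref{lem:wei}. Its proof is purely deterministic matrix algebra, so I would follow the standard argument of \cite{wei2016guarantees}.

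Write the reduced SVDs $\m{L}_t=\m{U}_t\m{\Sigma}_t\m{V}_t^H$ and $\m{X}=\m{U}\m{\Sigma}\m{V}^H$, both of rank $r$. From the explicit formula for $\cP_{T_t}$ in \eqref{eq:Wt},
\[
(\cI-\cP_{T_t})\m{X}=(\m{I}-\m{U}_t\m{U}_t^H)\,\m{X}\,(\m{I}-\m{V}_t\m{V}_t^H).
\]
Insert $\m{X}=\m{U}\m{U}^H\m{X}=\m{X}\m{V}\m{V}^H$. Using $\m{X}\m{V}=\m{U}\m{\Sigma}$ and $\m{X}^H\m{U}=\m{V}\m{\Sigma}$, we can write
\[
(\cI-\cP_{T_t})\m{X}=(\m{I}-\m{U}_t\m{U}_t^H)\,\m{U}\,\m{\Sigma}^{-1}\,\m{U}^H\m{X}\m{V}\,\m{\Sigma}^{-1}\,\m{V}^H\m{X}\,(\m{I}-\m{V}_t\m{V}_t^H).
\]
Since $\m{U}^H\m{X}\m{V}=\m{\Sigma}$, this simplifies to
\[
(\cI-\cP_{T_t})\m{X}=(\m{I}-\m{U}_t\m{U}_t^H)\,\m{X}\m{V}\,\m{\Sigma}^{-1}\,\m{U}^H\m{X}\,(\m{I}-\m{V}_t\m{V}_t^H).
\]

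The key observation is that the projections annihilate $\m{L}_t$:
\[
(\m{I}-\m{U}_t\m{U}_t^H)\m{L}_t=\m{0},\qquad \m{L}_t(\m{I}-\m{V}_t\m{V}_t^H)=\m{0}.
\]
We may therefore replace $\m{X}$ by $\m{X}-\m{L}_t$ in both outer factors, which gives
\[
(\cI-\cP_{T_t})\m{X}=(\m{I}-\m{U}_t\m{U}_t^H)(\m{X}-\m{L}_t)\,\m{V}\m{\Sigma}^{-1}\m{U}^H\,(\m{X}-\m{L}_t)(\m{I}-\m{V}_t\m{V}_t^H).
\]

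Now take norms. The orthogonal projections have spectral norm at most one. The middle factor $\m{V}\m{\Sigma}^{-1}\m{U}^H$ has spectral norm exactly $1/\sigma_{\min}(\m{X})$. For the Frobenius bound, use $\frobn{\m{A}\m{B}\m{C}}\le\frobn{\m{A}}\twon{\m{B}}\frobn{\m{C}}$ with $\m{A}$ and $\m{C}$ the two outer products. Each outer factor has Frobenius norm at most $\frobn{\m{L}_t-\m{X}}$, which yields
\[
\frobn{(\cI-\cP_{T_t})\m{X}}\le\frac{\frobn{\m{L}_t-\m{X}}^2}{\sigma_{\min}(\m{X})}.
\]
The spectral bound follows from submultiplicativity of $\twon{\cdot}$ in exactly the same way.

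The only delicate step is inserting the pseudo-inverse sandwich $\m{V}\m{\Sigma}^{-1}\m{U}^H$ correctly. Expressing $\m{X}$ as $\m{X}\m{V}\m{\Sigma}^{-1}\m{U}^H\m{X}$, which is valid because $\m{X}\m{X}^{\dagger}\m{X}=\m{X}$, makes the rest immediate. No rank constraint on $\m{L}_t$ beyond having the given tangent space is needed.
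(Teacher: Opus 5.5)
Your proof is correct and is essentially the standard argument of \cite[Lemma 4.1]{wei2016guarantees}, which the paper cites instead of giving its own proof: write $(\cI-\cP_{T_t})\m{X}=(\m{I}-\m{U}_t\m{U}_t^H)\m{X}(\m{I}-\m{V}_t\m{V}_t^H)$, insert $\m{X}=\m{X}\m{V}\m{\Sigma}^{-1}\m{U}^H\m{X}$, and use that the two projectors annihilate $\m{L}_t$ to replace each outer $\m{X}$ by $\m{X}-\m{L}_t$. Your first intermediate display ($\m{U}\m{\Sigma}^{-1}\m{U}^H\m{X}\m{V}\m{\Sigma}^{-1}\m{V}^H\m{X}$) is garbled and is not even dimensionally consistent when $n_1\neq n_2$, but the identity you actually use is justified independently by $\m{X}\m{X}^{\dagger}\m{X}=\m{X}$, so the argument stands; it even gives the sharper bound $\frobn{\m{L}_t-\m{X}}\twon{\m{L}_t-\m{X}}/\sigma_{\min}(\m{X})$.
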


\begin{proof}[Proof of Lemma \ref{lem:local}]
	Note that 
	\equ{
		\twon{\m{z}_{t+1}-\m{z}_\star} = \twon{ \cG^*\sbra{ \m{L}_{t+1} - \cG\m{z}_\star } } \le \frobn{\m{L}_{t+1} - \cG\m{z}_\star}, \label{eq:zt+1}
	} 
	where the facts $\m{z}_{t+1} = \cG^*\m{L}_{t+1}$, $\cG^*\cG = \cI$, and $\twon{\cG^*}\le 1$ are used.
	We have
	\equ{
		\begin{split}
			\frobn{\m{L}_{t+1} - \cG\m{z}_\star} 
			& \le \frobn{\m{L}_{t+1} - \m{W}_t} + \frobn{\m{W}_t - \cG\m{z}_\star} \\
			& \overset{(a)}{\le} 2 \frobn{\m{W}_t - \cG\m{z}_\star} \\
			& = 2 \frobn{ \cP_{T_t} \cG \sbra{ \m{z}_t - \m{B}^H\sbra{\m{B}\m{z}_t - \m{y}} }  - \cG\m{z}_\star} \\
			& \overset{(b)}{=} 2 \frobn{ \cP_{T_t} \cG \sbra{ \cG^* \m{L}_t -  \m{B}^H \m{B} \cG^* \m{L}_t + \m{B}^H \m{B} \m{z}_\star } - \cG \m{z}_\star } \\
			& = 2 \frobn{ \cP_{T_t} \cG \mbra{ \sbra{ \m{I} -  \m{B}^H \m{B} } \cG^* \m{L}_t + \m{B}^H \m{B} \m{z}_\star } - \cG \m{z}_\star } \\
			& \overset{(c)}{=} 2 \frobn{ \cP_{T_t} \cG \sbra{\m{I} - \m{B}^H\m{B}} \cG^* \sbra{ \m{L}_t - \cG \m{z}_\star } + \sbra{ \cP_{T_t} - \cI } \sbra{\cG\m{z}_\star} } \\
			& \le 2 \frobn{ \cP_{T_t} \cG \sbra{\m{I} - \m{B}^H\m{B}} \cG^* \sbra{ \m{L}_t - \cG \m{z}_\star } } + 2 \frobn{ \sbra{ \cP_{T_t} - \cI } \sbra{\cG\m{z}_\star} } \\
			& \overset{(d)}{\le} \sbra{2 \delta_{4r} + \frac{2 \frobn{\m{L}_t - \cG\m{z}_\star}}
				{\sigma_{\text{min}}\sbra{\cG\m{z}_\star}} } \frobn{\m{L}_t - \cG \m{z}_\star},
		\end{split} \label{eq:I1I2I3}
	}
	where step $(a)$ holds since both $\m{L}_{t+1}$ and $\cG\m{z}_\star$ have rank $r$ and $\m{L}_{t+1}$ is the best rank-$r$ approximation to $\m{W}_t$, step $(b)$ follows from $\m{y}=\m{B}\m{z}_\star$ and $\m{z}_t = \cG^* \m{L}_t$, step $(c)$ follows from $\cG^* \cG = \cI$, and step $(d)$ uses Lemma \ref{lem:I-PT} and Lemma \ref{lem:wei}.
	
	Since $\delta_{4r} \le 1/8$, we have $
		v:=
		2 \delta_{4r}
		+
		\frac12
		<1$.
	We now prove by induction that
	\equ{
		\frobn{\m{L}_t-\cG\m{z}_\star}
		\le
		\frac14
		\sigma_{\text{min}}\sbra{\cG\m{z}_\star},
		\qquad \forall t\ge0.
	}
	The claim holds for $t=0$ by assumption. Suppose it holds for some $t$. Then,
	from \eqref{eq:I1I2I3},
	\equ{
		\begin{split}
			\frobn{\m{L}_{t+1} - \cG\m{z}_\star}
			\le
			v \frobn{\m{L}_{t} - \cG\m{z}_\star}
			\le
			\frac{v}{4}
			\sigma_{\text{min}}\sbra{\cG\m{z}_\star}
			\le
			\frac14
			\sigma_{\text{min}}\sbra{\cG\m{z}_\star},
		\end{split}
	}
	since $v<1$. Hence the claim holds for all $t\ge0$. Consequently,
	\equ{
		\frobn{\m{L}_{t} - \cG\m{z}_\star}
		\le
		v^t
		\frobn{\m{L}_{0} - \cG\m{z}_\star}.
		\label{eq:19}
	}
	Substituting \eqref{eq:19} into \eqref{eq:zt+1} completes the proof.
\end{proof}

\section{Proof of Lemma \ref{lem:init1}}  \label{append:init1}
\begin{lemma} \label{lem:Proj_RIP}
	Let $\cS\subseteq\bC^{n_1\times n_2}$ be a linear subspace such that $
	\rank(\m{Z})\le 2r$ for $\m{Z}\in\cS$, and $\cP_{\cS}$ denote the orthogonal projector onto $\cS$.
	Then,
	\equ{
		\twon{
			\cP_{\cS}\cG
			\sbra{\m{B}^H\m{B}-\m{I}}
			\cG^*\cP_{\cS}
		}
		\le \delta_{2r}.
	}
\end{lemma}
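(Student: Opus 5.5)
The operator $\cM := \cP_{\cS}\cG\sbra{\m{B}^H\m{B}-\m{I}}\cG^*\cP_{\cS}$ is self-adjoint on $\bC^{n_1\times n_2}$ with respect to the Frobenius inner product. This holds because $\cP_{\cS}$ is an orthogonal projector, $\sbra{\cG^*}^* = \cG$, and $\m{B}^H\m{B}-\m{I}$ is Hermitian. The plan is to use the variational characterization of the spectral norm of a self-adjoint operator,
\[
\twon{\cM} = \sup_{\frobn{\m{Z}}=1} \abs{\inp{\cM\m{Z},\m{Z}}},
\]
and then evaluate this quadratic form directly with the LRIP inequality \eqref{eq:LRIP_matrix}.

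First I will reduce the supremum to $\m{Z}\in\cS$. For arbitrary $\m{Z}$ with $\frobn{\m{Z}}=1$, the definition of $\cM$ gives $\inp{\cM\m{Z},\m{Z}} = \inp{\cM\cP_{\cS}\m{Z},\cP_{\cS}\m{Z}}$, and $\frobn{\cP_{\cS}\m{Z}}\le 1$. By homogeneity it therefore suffices to bound $\abs{\inp{\cM\m{Z},\m{Z}}}$ over $\m{Z}\in\cS$ with $\frobn{\m{Z}}=1$, and for such $\m{Z}$ we have $\cP_{\cS}\m{Z}=\m{Z}$. Next I will compute the form. Setting $\m{u}:=\cG^*\m{Z}$, we get
\[
\inp{\cM\m{Z},\m{Z}} = \inp{\cG\sbra{\m{B}^H\m{B}-\m{I}}\cG^*\m{Z},\m{Z}} = \inp{\sbra{\m{B}^H\m{B}-\m{I}}\m{u},\m{u}} = \twon{\m{B}\cG^*\m{Z}}^2 - \twon{\cG^*\m{Z}}^2,
\]
which is real.

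By hypothesis every element of $\cS$ has rank at most $2r$, so Definition~\ref{def:LRIP} at rank $2r$ gives $\abs{\twon{\m{B}\cG^*\m{Z}}^2 - \twon{\cG^*\m{Z}}^2}\le \delta_{2r}\frobn{\m{Z}}^2 = \delta_{2r}$. Taking the supremum yields $\twon{\cM}\le\delta_{2r}$.

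The only step needing care is the norm identity $\twon{\cM}=\sup\abs{\inp{\cM\m{Z},\m{Z}}}$, which requires self-adjointness. I will verify this explicitly: for any $\m{X},\m{Y}$,
\[
\inp{\cM\m{X},\m{Y}} = \inp{\sbra{\m{B}^H\m{B}-\m{I}}\cG^*\cP_{\cS}\m{X},\ \cG^*\cP_{\cS}\m{Y}},
\]
and this expression is conjugate-symmetric in $\sbra{\m{X},\m{Y}}$. The identity then follows from the spectral theorem on the finite-dimensional complex inner product space $\bC^{n_1\times n_2}$.

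The linear-subspace hypothesis on $\cS$ is used only to make $\cP_{\cS}$ an orthogonal projector. The rank hypothesis is used only to make LRIP applicable to each $\m{Z}\in\cS$. No further obstacle is expected.
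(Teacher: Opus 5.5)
Your proof is correct and matches the paper's argument: self-adjointness gives $\twon{\cP_{\cS}\cG\sbra{\m{B}^H\m{B}-\m{I}}\cG^*\cP_{\cS}}=\sup_{\frobn{\m{Z}}=1}\abs{\twon{\m{B}\cG^*\cP_{\cS}\m{Z}}^2-\twon{\cG^*\cP_{\cS}\m{Z}}^2}$, and LRIP at rank $2r$ bounds this by $\delta_{2r}\frobn{\cP_{\cS}\m{Z}}^2\le\delta_{2r}$. The only cosmetic difference is that you first reduce to $\m{Z}\in\cS$ by homogeneity, whereas the paper applies LRIP directly to $\cP_{\cS}\m{Z}$.
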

\begin{proof}
	The proof is deferred to Appendix \ref{append:Proj_RIP}.
\end{proof}

The proof of Lemma \ref{lem:init1} is inspired by \cite{wei2016guarantees}. Let $\m{Q}\in \bC^{n_1 \times 2r}$ have orthonormal columns forming a basis for the sum of the column spaces of $\m{L}_0$ and $\cG\m{z}_\star$. Let $\m{Q}^{\perp}$ be the complement of $\m{Q}$. Let $\cP_{\m{Q}}\sbra{\m{X}} := \m{Q}\m{Q}^H \m{X}$ be the orthogonal projection. Then, we have
\equ{
	\begin{split}
		\frobn{ \m{L}_0 - \cG \m{B}^H \m{y} }^2 
		& = \frobn{ \m{L}_0 - \cP_{\m{Q}} \sbra{\cG \m{B}^H \m{y}} }^2 + \frobn{\cP_{\m{Q}^{\perp}} \sbra{\cG \m{B}^H \m{y}} }^2, \\
		\frobn{ \cG\m{z}_\star - \cG \m{B}^H \m{y} }^2 
		& = \frobn{ \cG\m{z}_\star - \cP_{\m{Q}} \sbra{\cG \m{B}^H \m{y}} }^2 + \frobn{\cP_{\m{Q}^{\perp}} \sbra{\cG \m{B}^H \m{y}} }^2.
	\end{split} \label{eq:L0gzstar}
}
Since both $\m{L}_0$ and $\cG\m{z}_\star$ are rank-$r$ and $\m{L}_0$ is the best rank-$r$ approximation of $\cG\m{B}^H\m{y}$, we have 
\equ{
	\frobn{ \m{L}_0 - \cG \m{B}^H \m{y} } \le\frobn{ \cG\m{z}_\star - \cG \m{B}^H \m{y} }. \label{eq:L0gzstar2}
} 
Combining \eqref{eq:L0gzstar} and \eqref{eq:L0gzstar2} yields $\frobn{ \m{L}_0 - \cP_{\m{Q}} \sbra{\cG \m{B}^H \m{y}} } \le \frobn{ \cG\m{z}_\star - \cP_{\m{Q}} \sbra{\cG \m{B}^H \m{y}} } $. Consequently, we have
\equ{
	\begin{split}
		\frobn{\m{L}_0 - \cG\m{z}_{\star}} 
		& \le \frobn{\m{L}_0 - \cP_{\m{Q}} \sbra{\cG \m{B}^H \m{y}} } + \frobn{ \cP_{\m{Q}} \sbra{\cG \m{B}^H \m{y}} - \cG\m{z}_{\star}} \\
		& \le 2 \frobn{ \cP_{\m{Q}} \sbra{\cG \m{B}^H \m{y}} - \cG\m{z}_{\star}} \\
		& = 2 \frobn{ \cP_{\m{Q}} \cG \sbra{ \m{B}^H \m{B} - \m{I} } \cG^* \cP_{\m{Q}} \sbra{ \cG \m{z}_{\star} } } \\
		& \le 2 \delta_{2r} \frobn{ \cG\m{z}_{\star} },
	\end{split}
}
where the equality follows from $\m{y}=\m{B}\m{z}_\star$ and $\cP_{\m{Q}}\sbra{\cG \m{z}_\star} = \cG \m{z}_\star$, and the last inequality follows from Lemma \ref{lem:Proj_RIP}.

\section{Proof of Lemma \ref{lem:init2}} \label{subsec:init2}
\begin{lemma}[{\cite[Theorem 2.8.1]{vershynin2018high}, Bernstein's inequality}]
	\label{lem:bernstein}
	Let $X_1,\ldots,X_m$ be independent, mean-zero,
	sub-exponential random variables whose sub-exponential
	norms are bounded by a common absolute constant.
	Then, for every $t\ge0$,
	\equ{
		\bP\left\{
		\abs{\frac1m\sum_{i=1}^m X_i}\ge t
		\right\}
		\le2\exp \sbra{ -\frac mC\min\{t^2,t\} },
	}
	where $C>0$ is an absolute constant depending only
	on the common bound on the sub-exponential norms.
\end{lemma}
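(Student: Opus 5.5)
The argument is the standard Chernoff (exponential moment) method, using the moment generating function characterization of sub-exponential random variables. Let $K$ be the common bound on the sub-exponential norms $\|X_i\|_{\psi_1}$. First I will record the MGF property of a mean-zero sub-exponential variable: there are absolute constants $c_0, C_0>0$ such that
\equ{
	\bE \exp\sbra{\lambda X_i} \le \exp\sbra{C_0 \lambda^2 K^2} \qquad \text{for all } \abs{\lambda} \le \frac{c_0}{K}.
}
This follows from the equivalent characterizations of the $\psi_1$ norm. Expanding $\exp(\lambda X_i)$ in a Taylor series, the linear term vanishes because $\bE X_i=0$. The moment bounds $\bE\abs{X_i}^p \le (C p K)^p$ then control the remaining terms. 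Summing the resulting geometric series for $\abs{\lambda}\le c_0/K$ gives $1+C_0\lambda^2K^2 \le \exp(C_0\lambda^2K^2)$.

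Next I will bound the upper tail. Let $S=\sum_{i=1}^m X_i$. For $0<\lambda\le c_0/K$, Markov's inequality and independence give
\equ{
	\bP\sbra{S \ge mt} \le e^{-\lambda m t}\prod_{i=1}^m \bE e^{\lambda X_i} \le \exp\sbra{-\lambda m t + C_0 m \lambda^2 K^2}.
}
I will then choose $\lambda = \min\lbra{ t/(2C_0K^2),\, c_0/K}$. If the first term is the minimum, the exponent is $-m t^2/(4C_0K^2)$. Otherwise $\lambda = c_0/K \le t/(2C_0K^2)$, so $C_0\lambda^2K^2\le \lambda t/2$, and the exponent is at most $-c_0 m t/(2K)$. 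Either way,
\equ{
	\bP\sbra{S\ge mt}\le \exp\sbra{-c\, m \min\lbra{t^2/K^2,\, t/K}}.
}

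Applying the same argument to $-X_i$, which has the same $\psi_1$ norm, handles the lower tail. A union bound then produces the factor $2$. Finally, since $K$ is bounded by an absolute constant, $\min\{t^2/K^2, t/K\} \ge \min\{t^2,t\}/\max\{K^2,K\}$, and the factor $\max\{K^2,K\}$ is absorbed into the constant $C$. This yields the stated form.

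The only genuinely technical step is the MGF bound in the first step, specifically getting it on a full interval $\abs{\lambda}\lesssim 1/K$ with constants independent of the distribution. I would take it directly from the standard equivalence of sub-exponential properties (Vershynin, Prop.~2.7.1), rather than reproving the moment-growth estimates.
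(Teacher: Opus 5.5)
Your proposal is correct and is essentially the standard argument from the cited source: the paper does not prove this lemma, it simply cites Vershynin. Vershynin's proof of Theorem 2.8.1 (and its averaged form, Corollary 2.8.3) is exactly this chain: the MGF bound of Proposition 2.7.1, the Chernoff bound, optimizing over $\lambda$, and symmetrizing via $-X_i$. Your final absorption of $\max\{K^2,K\}$ into $C$ correctly yields the stated form.
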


By the rotational invariance of Gaussian random variables, we obtain Lemma~\ref{lem:indep}.

\begin{lemma} \label{lem:indep}
	Let $\m{g} \in \bC^N$ be a random vector whose entries are i.i.d.\ $\cC\cN\sbra{0,\sigma^2}$ and let $\m{w} \in \mathbb{C}^N$ be a nonzero vector. Define the orthogonal projection operators 
	\equ{
		\cP_{\m{w}} \sbra{\m{g}} := \frac{\langle \m{w}, \m{g} \rangle}{ \twon{\m{w}}^2 } \m{w}, 
		\qquad \cP_{\m{w}}^\perp \sbra{\m{g}} := \m{g} - \cP_{\m{w}} \sbra{\m{g}}. \label{eq:Pw}
	}
	Then, $\cP_{\m{w}} \sbra{\m{g}}$ and $\cP_{\m{w}}^\perp \sbra{\m{g}}$ are independent.
\end{lemma}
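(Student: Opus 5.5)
The statement to prove is Lemma~\ref{lem:indep}: for $\m{g}$ with i.i.d.\ $\cC\cN(0,\sigma^2)$ entries and a fixed nonzero $\m{w}$, the projections $\cP_{\m{w}}(\m{g})$ and $\cP_{\m{w}}^\perp(\m{g})$ are independent. The plan is to rotate coordinates so that $\m{w}$ becomes a coordinate direction, after which the two projections depend on disjoint sets of independent coordinates.

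\textbf{Step 1: change of basis.} Complete $\m{u}_1 := \m{w}/\twon{\m{w}}$ to an orthonormal basis $\m{u}_1,\ldots,\m{u}_N$ of $\bC^N$. Let $\m{U} := [\m{u}_1,\ldots,\m{u}_N]$, which is unitary, and set $\m{h} := \m{U}^H\m{g}$.

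\textbf{Step 2: distribution of $\m{h}$.} By rotational invariance of the circularly symmetric complex Gaussian, $\m{h}$ again has i.i.d.\ $\cC\cN(0,\sigma^2)$ entries. To verify this, note that $\m{h}$ is a complex Gaussian vector with
\[
\bE[\m{h}\m{h}^H]=\m{U}^H(\sigma^2\m{I})\m{U}=\sigma^2\m{I},
\qquad
\bE[\m{h}\m{h}^T]=\m{U}^H\,\bE[\m{g}\m{g}^T]\,\overline{\m{U}}=\m{0}.
\]
Its real and imaginary parts are jointly Gaussian, so vanishing covariance and vanishing pseudo-covariance make the real vector $(\Re \m{h};\Im \m{h})$ have covariance $\frac{\sigma^2}{2}\m{I}_{2N}$. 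Hence $h_1,\ldots,h_N$ are mutually independent.

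\textbf{Step 3: rewrite the projections.} Using $\m{g}=\m{U}\m{h}$,
\[
\cP_{\m{w}}(\m{g})=\langle \m{u}_1,\m{g}\rangle\,\m{u}_1 = h_1\m{u}_1,
\qquad
\cP_{\m{w}}^\perp(\m{g})=\sum_{k\ge2} h_k\,\m{u}_k .
\]
This assumes the convention $\langle \m{w},\m{g}\rangle=\m{w}^H\m{g}$ used for $\cP_{\m{w}}$. The first projection is a measurable function of $h_1$ alone, and the second is a measurable function of $(h_2,\ldots,h_N)$ alone. By the independence from Step 2, the two are independent.

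The only step that needs real care is Step 2, specifically using the zero pseudo-covariance to conclude full independence of the complex coordinates rather than mere uncorrelatedness. Everything else is a change of basis.
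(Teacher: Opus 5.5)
Your proof is correct and follows essentially the same route as the paper: apply a unitary change of basis that turns $\m{w}$ into a coordinate direction, use rotational invariance of the circularly symmetric Gaussian, and read off independence from the independence of the rotated coordinates. Your version is slightly tidier than the paper's. Writing $\cP_{\m{w}}(\m{g}) = h_1\m{u}_1$ and $\cP_{\m{w}}^\perp(\m{g}) = \sum_{k\ge2} h_k\m{u}_k$ avoids the paper's reduction to $\m{w}=\m{e}_1$ followed by a transfer back. Your explicit pseudo-covariance check also settles a point the paper skips when it appeals only to zero cross-covariance.
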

\begin{proof}
	The proof is deferred to Appendix \ref{append:indep}.
\end{proof}

Based on Lemmas \ref{lem:bernstein} and \ref{lem:indep}, we obtain Lemma \ref{lem:KeySpectralNormBound}, which provides a non-uniform spectral norm bound for any fixed rank-$r$ matrix independent of the measurement matrix, which will play a key role in the proof of Lemmas \ref{lem:init2} and \ref{lem:contraction}.

\begin{lemma} \label{lem:KeySpectralNormBound}
	Let $0<c<1$. Let $\m{B} \in \bC^{m\times N}$ have i.i.d.\ entries following $\cC\cN\sbra{0,1/m}$. Let $\m{X}$ be a rank-$r$ matrix that is statistically independent of $\m{B}$. Assume that $m \ge C c^{-2} r \max\lbra{c_s, \ln N} \ln N$ for some absolute constant $C>0$. Then, with probability at least $1 - N^{- \sbra{c_s + \ln N} }$, it holds that
	\equ{
		\twon{ \cG \sbra{ \m{B}^H \m{B} - \m{I} } \cG^* \m{X} } \le c \twon{\m{X}},
	}
	where $C>0$ is a universal constant.
\end{lemma}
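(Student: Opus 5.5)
Write $\m{X} = \sum_{k=1}^r \sigma_k \m{u}_k \m{v}_k^H$ and set $\m{w}_k := \cG^*(\m{u}_k\m{v}_k^H)$, so that $\twon{\m{w}_k}\le 1$. By the triangle inequality,
\[
\twon{\cG(\m{B}^H\m{B}-\m{I})\cG^*\m{X}} \le \sum_k \sigma_k \twon{\cG(\m{B}^H\m{B}-\m{I})\m{w}_k},
\]
but this loses a factor $r$. Instead, I would treat $\m{w} := \cG^*\m{X}$ as a single fixed vector, independent of $\m{B}$, with $\twon{\m{w}} \le \frobn{\m{X}} \le \sqrt{r}\twon{\m{X}}$. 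It then suffices to show that for a fixed $\m{w}$,
\[
\twon{\cG(\m{B}^H\m{B}-\m{I})\m{w}} \le \frac{c}{\sqrt{r}}\twon{\m{w}}
\]
with the stated probability, once $m \ge C c^{-2} r \max\{c_s,\ln N\}\ln N$. Writing $\m{B}^H\m{B}\m{w} = \sum_i \m{b}_i \m{b}_i^H\m{w}$, I would split each row $\m{b}_i$ via Lemma \ref{lem:indep} into $\cP_{\m{w}}(\m{b}_i) + \cP_{\m{w}}^\perp(\m{b}_i)$. Setting $\beta_i := \m{b}_i^H\m{w}$, which depends only on $\cP_{\m{w}}(\m{b}_i)$, this gives
\[
(\m{B}^H\m{B}-\m{I})\m{w} = \Big(\sum_i |\beta_i|^2/\twon{\m{w}}^2 - 1\Big)\m{w} + \sum_i \cP_{\m{w}}^\perp(\m{b}_i)\,\beta_i - (\m{I}-\text{parallel part adjustments}),
\]
which separates into a parallel term and a perpendicular term.

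For the parallel term, $\sum_i |\beta_i|^2/\twon{\m{w}}^2$ is an average of $m$ i.i.d.\ $\text{Exp}(1)$-type variables scaled by $1/m$. Lemma \ref{lem:bernstein} gives a deviation of at most $c/(2\sqrt r)$ with probability $1-2\exp(-m c^2/(Cr))$. The parallel contribution is then at most $(c/(2\sqrt r))\twon{\cG\m{w}} \le (c/(2\sqrt r))\twon{\m{w}}$, using $\twon{\cG\m{w}}\le\frobn{\cG\m{w}}=\twon{\m{w}}$.

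For the perpendicular term, I would condition on $\{\beta_i\}$. By independence, $\m{h} := \sum_i \beta_i \cP_{\m{w}}^\perp(\m{b}_i)$ is then a complex Gaussian vector with covariance $(\sum_i|\beta_i|^2/m)(\m{I}-\m{w}\m{w}^H/\twon{\m{w}}^2)$, which is dominated by $(\sum_i |\beta_i|^2/m)\m{I}$. On the event above, $\sum_i|\beta_i|^2 \le 2\twon{\m{w}}^2$, so $\m{h}$ is the projection of a vector $\sqrt{2/m}\,\twon{\m{w}}\,\m{g}$ with $\m{g}$ standard. I would then bound $\twon{\cG\m{h}}$ in two steps. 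First, control its expectation by Lemma \ref{lem:norm}, noting that the projection removes only a rank-one part controlled by $\twon{\cG}\le 1$ on that direction; this gives $\bE\twon{\cG\m{h}} \lesssim \twon{\m{w}}\sqrt{(c_s+\ln N)\ln N/m}$. Second, get a high-probability statement via Gaussian concentration of the $1$-Lipschitz map $\m{g}\mapsto\twon{\cG P^\perp \m{g}}$, with deviation $\twon{\m{w}}\sqrt{(c_s+\ln N)\ln N/m}$ at probability $N^{-(c_s+\ln N)}$.

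Finally, I would choose $m \ge C c^{-2} r\max\{c_s,\ln N\}\ln N$ so that both terms are at most $c\twon{\m{w}}/(2\sqrt r)$, and take a union bound after adjusting constants. The main obstacle is the perpendicular step. There, the exact decomposition of $\m{B}^H\m{B}\m{w}$ must be written carefully: it equals $\sum_i \m{b}_i\beta_i$ with $\m{b}_i = \cP_{\m{w}}\m{b}_i + \cP^\perp_{\m{w}}\m{b}_i$ and $\cP_{\m{w}}\m{b}_i = \overline{\beta_i}\,\m{w}/\twon{\m{w}}^2$ up to conjugation conventions. The conditional Gaussianity must also be justified so that Lemma \ref{lem:norm} applies without losing a factor $r$.
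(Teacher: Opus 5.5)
Your proposal is correct and follows essentially the same route as the paper. Both arguments:
\begin{itemize}
\item split each row $\m{b}_i$ along $\cG^*\m{X}$ and its orthogonal complement;
\item control the parallel coefficient $\twon{\m{B}\cG^*\m{X}}^2/\twon{\cG^*\m{X}}^2-1$ by Bernstein;
\item treat the perpendicular sum as conditionally Gaussian with covariance $\frac{1}{m}\twon{\m{B}\cG^*\m{X}}^2$ times the orthogonal projector, bounding its mean via Lemma~\ref{lem:norm} and its deviation via $1$-Lipschitz Gaussian concentration;
\item pay only $\twon{\cG^*\m{X}}\le\sqrt{r}\twon{\m{X}}$ at the end.
\end{itemize}

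Two small remarks. First, the decomposition is exact with no extra ``adjustment'' term, since the $-\m{w}$ is already absorbed into the parallel coefficient. Second, for the mean you peel off the rank-one part by the triangle inequality, at an additive $O(1)$ cost that is harmless because $(c_s+\ln N)\ln N\ge 1$. The paper instead uses conditional Jensen to obtain $\bE\twon{\cG\cP^\perp_{\cG^*\m{X}}(\m{g})}\le\bE\twon{\cG\m{g}}$ directly.
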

\begin{proof}
	The proof is deferred to Appendix \ref{append:KeySpectralNormBound}.
\end{proof}

	\begin{proof}[Proof of Lemma \ref{lem:init2}]
		Since $\widetilde{\m{L}}_0$ is a best rank-at-most-$r$ approximation to $\cG\m{B}_0^H\m{y}_0$ and $\m{y}_0=\m{B}_0\m{z}_\star$, we have
		\equ{
			\twon{\widetilde{\m{L}}_0-\cG\m{z}_\star}
			\le 2\twon{\cG\m{B}_0^H\m{y}_0-\cG\m{z}_\star}
			= 2\twon{\cG(\m{B}_0^H\m{B}_0-\m{I})\cG^*(\cG\m{z}_\star)}.
			\label{eq:initial_spectral_truncation}
		}
		Let $\m{X}=\cG\m{z}_\star$, $m=\widetilde m$, and $c = 1/(32\kappa)$ in Lemma \ref{lem:KeySpectralNormBound}. Then, 
		\equ{
				\twon{\widetilde{\m{L}}_0-\cG\m{z}_\star}
				\le 2c \kappa \sigma_{\text{min}}(\cG\m{z}_\star)
				\le\frac{1}{16}\sigma_{\text{min}}(\cG\m{z}_\star),
		}
		completing the proof.
	\end{proof}

\section{Proof of Lemma \ref{lem:contraction}} \label{subsec:contraction}
	As in \eqref{eq:I1I2I3}, using the fact $\m{y}_{t+1} = \m{B}_{t+1} \m{z}_\star$, $\widetilde{\m{z}}_t = \cG^* \widetilde{\m{L}}_t$ and $\cG^*\cG = \cI$, we have
	\equ{
		\begin{split}
			\twon{\widetilde{\m{L}}_{t+1} - \cG\m{z}_\star}
			& \le 2 \twon{ \widetilde{\m{W}}_t - \cG\m{z}_\star } \\
			& = 2 \twon{ \cP_{\widetilde{T}_t} \cG \mbra{ \widetilde{\m{z}}_t - \m{B}^H_{t+1} \sbra{\m{B}_{t+1} \widetilde{\m{z}}_t - \m{y}_{t+1}} } - \cG\m{z}_\star } \\
			& = 2 \twon{ \cP_{\widetilde{T}_t} \cG \sbra{ \cG^* \widetilde{\m{L}}_t -  \m{B}_{t+1}^H \m{B}_{t+1} \cG^* \widetilde{\m{L}}_t +  \m{B}_{t+1}^H \m{B}_{t+1} \m{z}_\star } - \cG \m{z}_\star } \\
			& \le 2 \twon{ \cP_{\widetilde{T}_t} \cG \sbra{ \m{I} -  \m{B}_{t+1}^H \m{B}_{t+1} } \cG^* \sbra{ \widetilde{\m{L}}_t - \cG\m{z}_\star } } + 2 \twon{ \sbra{ \cP_{\widetilde{T}_t} - \cI } \sbra{\cG\m{z}_\star} } \\
			& := J_1 + J_2
		\end{split} \label{eq:J1J2}
	}
	
	\textbf{Bounding $J_1$.} We have
	\equ{
		\begin{split}
			J_1 
			& \le 4
			\twon{ \cG \sbra{ \m{I} - \m{B}_{t+1}^H \m{B}_{t+1} } \cG^* \sbra{\widetilde{\m{L}}_t - \cG\m{z}_\star} } \\
			& \le \frac{1}{8} \twon{ \widetilde{\m{L}}_t - \cG\m{z}_\star }.
		\end{split} \label{eq:J1}
	}
	Here, the first inequality follows from $\twon{\cP_{\widetilde{T}_t} \m{X}} = \twon{ \m{\widetilde{U}}_t \m{\widetilde{U}}_t^H \m{X} \sbra{\m{I} - \m{\widetilde{V}}_t \m{\widetilde{V}}_t^H } + \m{X} \m{\widetilde{V}}_t \m{\widetilde{V}}_t^H } \le 2 \twon{\m{X}}$. The second one follows from 
	Lemma~\ref{lem:KeySpectralNormBound} and letting $c\le 1/32$. The conditions required by Lemma \ref{lem:KeySpectralNormBound} are satisfied since the entries of $\m{B}_{t+1}$ are i.i.d.\ $\mathcal{CN}(0,1/\widetilde{m})$ with $\widetilde{m} \ge C \kappa^2 r \max \lbra{c_s, \ln N} \ln N$, $\widetilde{\m{L}}_t - \cG\m{z}_\star$ is independent of $\m{B}_{t+1}$ and
	$\rank\sbra{ \widetilde{\m{L}}_t - \cG\m{z}_\star } \le 2r$. 
	
	\textbf{Bounding $J_2$.} Note that $\widetilde{T}_t$ is the tangent space of the rank-$r$ matrix manifold at $\widetilde{\m{L}}_t$. It follows from Lemma \ref{lem:wei} that
	\equ{
		J_2 \le \frac{2}{\sigma_{\text{min}} \sbra{\cG\m{z}_\star}} \twon{ \widetilde{\m{L}}_t - \cG\m{z}_\star }^2. \label{eq:J2}
	}
	
	Substituting \eqref{eq:J1} and \eqref{eq:J2} into \eqref{eq:J1J2}, we conclude that
	\equ{ \label{eq:Lgz}
		\twon{ \widetilde{\m{L}}_{t+1} - \cG\m{z}_\star }
		\le \sbra{ \frac{1}{8} + \frac{2}{\sigma_{\text{min}} \sbra{\cG\m{z}_\star}} \twon{ \widetilde{\m{L}}_t - \cG\m{z}_\star } } \twon{ \widetilde{\m{L}}_t - \cG\m{z}_\star }.
	}
	Combining Lemma \ref{lem:init2} and \eqref{eq:Lgz} yields \eqref{eq:LTgz}.

\section{Proofs of Technical Lemmas} \label{append:lemmas}

\subsection{Proof of Lemma \ref{lem:I-PT}} \label{append:I-PT}
We have 
\equ{ 
	\begin{split} 
		\frobn{ \cP_{T_t} \cG \sbra{ \m{B}^H \m{B} - \m{I}} \cG^* \m{X} }  
		& = \sup_{\frobn{\m{Z}}=1} \abs{ \left \langle \cP_{T_t} \cG \sbra{  \m{B}^H \m{B} - \m{I} } \cG^* \m{X}, \m{Z} \right \rangle } \\ 
		& = \sup_{\frobn{\m{Z}}=1} \abs{ 
			\left \langle \m{B}\cG^* \m{X}, \m{B}\cG^*\cP_{T_t} \m{Z} \right \rangle  
			- \left \langle \cG^*\m{X}, \cG^* \cP_{T_t} \m{Z} \right \rangle }. 
	\end{split} \label{eq:29} 
} 

Let $\alpha := \frobn{\m{X}}$ and $\beta := \frobn{\cP_{T_t}\m{Z}}$. 
The desired bound is trivial if $\alpha\beta=0$. 
Note that for any $q\in\bC$, there exists $\omega\in\bC$ with $\abs{\omega}=1$ such that $\abs{q}=\omega q$. Choose $\omega\in\bC$ with 
$\abs{\omega}=1$ such that
\equ{
	\begin{split}
		& \abs{
			\left \langle \m{B}\cG^* \m{X}, \m{B}\cG^*\cP_{T_t}\m{Z} \right \rangle
			-
			\left \langle \cG^*\m{X}, \cG^*\cP_{T_t}\m{Z} \right \rangle
		} \\
		& =
		\alpha\beta\, \Re \left(
		\left\langle
		\m{B}\cG^*(\alpha^{-1}\m{X}),
		\m{B}\cG^*(\omega\beta^{-1}\cP_{T_t}\m{Z})
		\right\rangle
		-
		\left\langle
		\cG^*(\alpha^{-1}\m{X}),
		\cG^*(\omega\beta^{-1}\cP_{T_t}\m{Z})
		\right\rangle
		\right).
	\end{split}
}
By the polarization identity,
\equ{
	\begin{split}
		& \abs{
			\left \langle \m{B}\cG^* \m{X}, \m{B}\cG^*\cP_{T_t}\m{Z} \right \rangle
			-
			\left \langle \cG^*\m{X}, \cG^*\cP_{T_t}\m{Z} \right \rangle
		} \\
		& =
		\frac{\alpha\beta}{4}
		\left[
		\twon{
			\m{B}\cG^*
			\sbra{
				\alpha^{-1}\m{X}
				+
				\omega\beta^{-1}\cP_{T_t}\m{Z}
			}
		}^2
		-
		\twon{
			\cG^*
			\sbra{
				\alpha^{-1}\m{X}
				+
				\omega\beta^{-1}\cP_{T_t}\m{Z}
			}
		}^2
		\right. \\
		& \qquad \left.
		-
		\twon{
			\m{B}\cG^*
			\sbra{
				\alpha^{-1}\m{X}
				-
				\omega\beta^{-1}\cP_{T_t}\m{Z}
			}
		}^2
		+
		\twon{
			\cG^*
			\sbra{
				\alpha^{-1}\m{X}
				-
				\omega\beta^{-1}\cP_{T_t}\m{Z}
			}
		}^2
		\right].
	\end{split}
}
Since $\rank(\m{X})\le r_2$ and 
$\rank(\cP_{T_t}\m{Z})\le 2r_1$, we have $
	\rank\sbra{
		\alpha^{-1}\m{X}
		\pm
		\omega\beta^{-1}\cP_{T_t}\m{Z}
	}
	\le 2r_1+r_2$.
Therefore, the LRIP gives
\equ{
	\begin{split}
		\abs{
			\left \langle \m{B}\cG^* \m{X}, \m{B}\cG^*\cP_{T_t}\m{Z} \right \rangle
			-
			\left \langle \cG^*\m{X}, \cG^*\cP_{T_t}\m{Z} \right \rangle
		} 
		& \le
		\frac{\alpha\beta\delta_{2r_1+r_2}}{4}
		\left(
		\frobn{
			\alpha^{-1}\m{X}
			+
			\omega\beta^{-1}\cP_{T_t}\m{Z}
		}^2
		+
		\frobn{
			\alpha^{-1}\m{X}
			-
			\omega\beta^{-1}\cP_{T_t}\m{Z}
		}^2
		\right) \\
		& =
		\frac{\alpha\beta\delta_{2r_1+r_2}}{2}
		\left(
		\frobn{\alpha^{-1}\m{X}}^2
		+
		\frobn{\beta^{-1}\cP_{T_t}\m{Z}}^2
		\right) \\
		& =
		\alpha\beta\delta_{2r_1+r_2}.
	\end{split}
	\label{eq:BB}
}

Putting \eqref{eq:BB} into \eqref{eq:29} yields 
\equ{ 
		\frobn{ \cP_{T_t} \cG \sbra{ \m{B}^H \m{B} - \m{I}} \cG^* \m{X} }  
	\le \sup_{\frobn{\m{Z}}=1} 
		\delta_{2r_1+r_2} \frobn{\m{X}} \frobn{\cP_{T_t}\m{Z}}
		\le \delta_{2r_1+r_2} \frobn{\m{X}}.
}

\subsection{Proof of Lemma \ref{lem:Proj_RIP}} \label{append:Proj_RIP}
	Since $\cP_{\cS} \cG \sbra{ \m{B}^H \m{B} - \m{I} } \cG^* \cP_{\cS}$ is self-adjoint, we have 
	\equ{ 
		\begin{split} 
			\twon{ \cP_{\cS} \cG \sbra{ \m{B}^H \m{B} - \m{I} } \cG^* \cP_{\cS} }  
			& = \sup_{\frobn{\m{Z}}=1} \abs{ \left \langle \sbra{ \cP_{\cS} \cG \sbra{ \m{B}^H \m{B} - \m{I} } \cG^* \cP_{\cS} } \m{Z}, \m{Z} \right \rangle } \\ 
			& = \sup_{\frobn{\m{Z}}=1} \abs{ \twon{\m{B}\cG^*\cP_{\cS}\m{Z}}^2 - \twon{\cG^*\cP_{\cS}\m{Z}}^2 } \\ 
			& \le \sup_{\frobn{\m{Z}}=1} \delta_{2r} \frobn{\cP_{\cS}\m{Z}}^2 \\ 
			& \le \delta_{2r}, 
		\end{split} 
	} 
	where the first inequality follows from the LRIP and $\rank\sbra{\cP_{\cS}\m{Z}} \le 2r$.

\subsection{Proof of Lemma \ref{lem:indep}} \label{append:indep}
	Since $\m{w} \neq \m{0}$, there exists a unitary matrix $\m{U}$ such that $\m{U}\m{w} = \twon{\m{w}} \m{e}_1$, where 
	$\m{e}_1 = (1,0,\dots,0)^T$.  
	Because $\m{g} \sim \cC\cN(\m{0}, \sigma^2\m{I}_N)$ is rotationally invariant, we have $\m{U} \m{g}$ has the same distribution as $\m{g}$.
	Therefore, it suffices to prove the claim for $\m{w} = \m{e}_1$.
	
	Let $\widetilde{\m{g}} := \m{U} \m{g} = (\widetilde{g}_1,\dots,\widetilde{g}_N)^T$, where $\widetilde{g}_1,\dots,\widetilde{g}_N$ are i.i.d.\  $\mathcal{CN}(0,\sigma^2)$.
	For $\m{w} = \m{e}_1$, the projections admit the explicit forms
	$\cP_{\m{e}_1}(\widetilde{\m{g}}) = \widetilde{g}_1 \m{e}_1, 
	\cP_{\m{e}_1}^\perp(\widetilde{\m{g}}) = (0,\widetilde{g}_2,\dots,\widetilde{g}_N)^T$.
	Hence, $(\cP_{\m{e}_1}(\widetilde{\m{g}}), \cP_{\m{e}_1}^\perp(\widetilde{\m{g}}))$ is a jointly complex Gaussian vector, being a
	linear transformation of $\widetilde{\m{g}}$.
	Moreover, the cross-covariance satisfies 
	$\mathbb{E} \mbra{ \cP_{\m{e}_1}(\widetilde{\m{g}})\, \cP_{\m{e}_1}^\perp(\widetilde{\m{g}})^H } = \m{0}$, since $\lbra{\widetilde{g}_n}$ are independent and mean-zero.
	By the independence criterion for jointly complex Gaussian random vectors, zero cross-covariance implies independence. Thus, $\cP_{\m{e}_1}(\widetilde{\m{g}}) $ is independent of $ \cP_{\m{e}_1}^\perp(\widetilde{\m{g}})$.
	Finally, independence is preserved under unitary transformations, and the desired result for general $\m{w}$ follows.

\subsection{Proof of Lemma \ref{lem:KeySpectralNormBound}} \label{append:KeySpectralNormBound}
	We condition on $\m{X}$. If $\cG^*\m{X}=\m0$, the conclusion is immediate, so assume $\cG^*\m{X}\ne\m0$.
	We separates the target into its component along $\cG^*\m{X}$ and its orthogonal component.
	The first is controlled by scalar concentration, while the second becomes Gaussian after conditioning on the scalar coefficients.
	
	The projection in Lemma~\ref{lem:indep}, applied along $\cG^*\m{X}$, gives
	\equ{
		\m{b}_i
		=\frac{\inp{\cG^*\m{X},\m{b}_i}}{\twon{\cG^*\m{X}}^2}\cG^*\m{X}
		+\cP_{\cG^*\m{X}}^\perp(\m{b}_i).
	}
	Then, we have
	\equ{
		\begin{split}
			\m{B}^H\m{B}\cG^*\m{X}
			= \sum_{i=1}^m \m{b}_i\inp{\m{b}_i,\cG^*\m{X}} 
			& = \sum_{i=1}^m \frac{|\inp{\m{b}_i,\cG^*\m{X}}|^2}{\twon{\cG^*\m{X}}^2}\cG^*\m{X} + \sum_{i=1}^m \inp{\m{b}_i,\cG^*\m{X}}\cP_{\cG^*\m{X}}^\perp(\m{b}_i) \\
			& =  \frac{\twon{\m{B}\cG^*\m{X}}^2}{\twon{\cG^*\m{X}}^2} \cG^*\m{X} +\sum_{i=1}^m\inp{\m{b}_i,\cG^*\m{X}}\cP_{\cG^*\m{X}}^\perp(\m{b}_i).
		\end{split}
	}
	It follows that
	\equ{
		\begin{split}
			\twon{\cG(\m{B}^H\m{B}-\m{I})\cG^*\m{X}}
			& \le\abs{\frac{\twon{\m{B}\cG^*\m{X}}^2}{\twon{\cG^*\m{X}}^2}-1}\twon{\cG\cG^*\m{X}}  +\twon{\cG \sum_{i=1}^m\inp{\m{b}_i,\cG^*\m{X}}\cP_{\cG^*\m{X}}^\perp(\m{b}_i) }\\
			& :=K_1 + K_2.
		\end{split}
		\label{eq:J11}
	}

	\textbf{Bounding $K_1$.}
	We have
	\equ{
		\frac{\twon{\m{B}\cG^*\m{X}}^2}{\twon{\cG^*\m{X}}^2}-1
		=\frac1m\sum_{i=1}^m
		\left(
		\frac{m|\inp{\m{b}_i,\cG^*\m{X}}|^2}{\twon{\cG^*\m{X}}^2}-1
		\right).
		\label{eq:input_energy_centering}
	}
	Since $\cG^*\m{X}$ is fixed under the conditioning and the vectors $\m{b}_i$ are independent, we have
	$
		\inp{\m{b}_i,\cG^*\m{X}}
		\sim\ccn\left(0,\frac{\twon{\cG^*\m{X}}^2}{m}\right),
	$ and $
		\frac{\sqrt m\,\inp{\m{b}_i,\cG^*\m{X}}}{\twon{\cG^*\m{X}}}
		\sim\ccn(0,1).
	$
	The squared modulus of a standard complex Gaussian variable is exponential with mean one.
	Thus the variables $m|\inp{\m{b}_i,\cG^*\m{X}}|^2/\twon{\cG^*\m{X}}^2$ are independent exponentials of mean one, and the centered summands in \eqref{eq:input_energy_centering} have uniformly bounded sub-exponential norms.
	
	Applying Lemma \ref{lem:bernstein} gives, for every $t\ge0$,
	\equ{
		\bP\left\{
		\abs{\frac{\twon{\m{B}\cG^*\m{X}}^2}{\twon{\cG^*\m{X}}^2}-1}>t
		\right\}
		\le2\exp \sbra{ -\frac mC\min\{t^2,t\} },
		\label{eq:input_bernstein_tail}
	}
	where $C\ge1$ is a universal constant.
	Assume $m\ge4C(c_s+\ln N)\ln N$, and choose $t=2\sqrt{ C(c_s+\ln N)\ln N / m}$.
	Then $t\le1$, so $\min\{t^2,t\}=t^2$ and $mt^2/C=4(c_s+\ln N)\ln N$.
	Substituting into \eqref{eq:input_bernstein_tail} yields
	\equ{
			\bP\left\{
			\abs{\frac{\twon{\m{B}\cG^*\m{X}}^2}{\twon{\cG^*\m{X}}^2}-1}
			> 2 \sqrt{\frac{C(c_s+\ln N)\ln N}{m}}
			\right\} 
			\le 2\exp \sbra{ -4(c_s+\ln N)\ln N}
			\le \frac12N^{-(c_s+\ln N)}.
	}
	The last inequality uses $(c_s+\ln N)\ln N\ge1$, since $N\ge2$ and $c_s\ge1$.
	Consequently, with probability at least $1-\frac12N^{-(c_s+\ln N)}$,
	\equ{
		\abs{\frac{\twon{\m{B}\cG^*\m{X}}^2}{\twon{\cG^*\m{X}}^2}-1}
		\le2\sqrt{\frac{C(c_s+\ln N)\ln N}{m}}
		\le1.
		\label{eq:input_energy}
	}
	Moreover, we have $
		\twon{\cG\cG^*\m{X}}
		\le\frobn{\cG\cG^*\m{X}}
		=\twon{\cG^*\m{X}}$.
	Combining this inequality with \eqref{eq:input_energy} in the definition of $K_1$ yields, on the same event,
	\equ{
		K_1
		\le 2
		\sqrt{\frac{C(c_s+\ln N)\ln N}{m}}
		\twon{\cG^*\m{X}}.
		\label{eq:K1}
	}
	
	\textbf{Bounding $K_2$.}
	By Lemma~\ref{lem:indep} and independence across $i$,
	$\{\inp{\m{b}_i,\cG^*\m{X}}\}_{i=1}^m$
	is independent of
	$\{\cP_{\cG^*\m{X}}^\perp(\m{b}_i)\}_{i=1}^m$.
	Since $\bE[\m{b}_i\m{b}_i^H]=\m{I}_N/m$, we have
	$
			\bE\left[
			\cP_{\cG^*\m{X}}^\perp(\m{b}_i)
			\bigl(\cP_{\cG^*\m{X}}^\perp(\m{b}_i)\bigr)^H
			\right]
			=\frac1m
			\cP_{\cG^*\m{X}}^\perp
			\bigl(\cP_{\cG^*\m{X}}^\perp\bigr)^H
			=\frac1m\cP_{\cG^*\m{X}}^\perp.
		\label{eq:orthogonal_covariance}
	$
	
	Conditioning on the coefficients
	$\{\inp{\m{b}_i,\cG^*\m{X}}\}_{i=1}^m$
	leaves the projected vectors independent and centered complex Gaussian.
	Their weighted sum is therefore conditionally centered complex Gaussian, with covariance
	\equ{
		\frac1m\sum_{i=1}^m
		|\inp{\m{b}_i,\cG^*\m{X}}|^2
		\cP_{\cG^*\m{X}}^\perp
		=
		\frac{\twon{\m{B}\cG^*\m{X}}^2}{m}
		\cP_{\cG^*\m{X}}^\perp.
		\label{eq:conditional_covariance}
	}
	
	Let $\m{g}\sim\ccn(\m0,\m{I}_N)$ be independent of $\m{X}$ and the coefficients.
	Under the conditioning, $\twon{\m{B}\cG^*\m{X}}$ is fixed, and the vector
	$
	\frac{\twon{\m{B}\cG^*\m{X}}}{\sqrt m}
	\cP_{\cG^*\m{X}}^\perp(\m{g})
	$
	is centered complex Gaussian with the covariance in \eqref{eq:conditional_covariance}.
	Then we have
	\equ{
		\sum_{i=1}^m
		\inp{\m{b}_i,\cG^*\m{X}}
		\cP_{\cG^*\m{X}}^\perp(\m{b}_i)
		\overset d=
		\frac{\twon{\m{B}\cG^*\m{X}}}{\sqrt m}
		\cP_{\cG^*\m{X}}^\perp(\m{g}),
	}
	where $\overset d=$ denotes equality in distribution conditional on $\m{X}$ and the coefficients.
	Applying $\cG$ and taking the spectral norm therefore gives, under the same conditioning,
	\equ{
		K_2
		\overset d=
		\frac{\twon{\m{B}\cG^*\m{X}}}{\sqrt m}
		\twon{\cG\cP_{\cG^*\m{X}}^\perp(\m{g})}.
		\label{eq:orthogonal_conditional_law}
	}
	
	To bound the norm in \eqref{eq:orthogonal_conditional_law}, we first control its expectation and then its deviation above the expectation.
	By Lemma \ref{lem:indep}, the two components in $
		\m{g}
		=\cP_{\cG^*\m{X}}^\perp(\m{g})
		+\cP_{\cG^*\m{X}}(\m{g})$
	are independent, and the parallel component has mean zero.
	Consequently,
	$
			\bE\left[
			\m{g}\mid\cP_{\cG^*\m{X}}^\perp(\m{g})
			\right]
			 =\cP_{\cG^*\m{X}}^\perp(\m{g})
			+\bE\left[\cP_{\cG^*\m{X}}(\m{g})\right]
			=\cP_{\cG^*\m{X}}^\perp(\m{g}).
	$
	By linearity of $\cG$ and conditional Jensen's inequality for the convex spectral norm, we have
	\equ{
			\twon{\cG\cP_{\cG^*\m{X}}^\perp(\m{g})}
			=
			\twon{
				\bE\left[
				\cG\m{g}\mid\cP_{\cG^*\m{X}}^\perp(\m{g})
				\right]
			}
			\le
			\bE\left[
			\twon{\cG\m{g}}
			\mid\cP_{\cG^*\m{X}}^\perp(\m{g})
			\right].
	}
	Taking expectations and applying Lemma~\ref{lem:norm} yields
	\equ{
		\bE\twon{\cG\cP_{\cG^*\m{X}}^\perp(\m{g})}
		\le
		\bE\left[
		\bE\left[
		\twon{\cG\m{g}}
		\mid\cP_{\cG^*\m{X}}^\perp(\m{g})
		\right]
		\right]
		= \bE\twon{\cG\m{g}}
		\le2\sqrt{(c_s+\ln N)\ln N}.
		\label{eq:orthogonal_gaussian_mean}
	}
	
	To turn this expectation bound into a high-probability bound, we verify the Lipschitz property required for Gaussian concentration.
	For any two inputs $\m{g},\m{w}\in\bC^N$,
	\equ{
		\abs{
				\twon{\cG\cP_{\cG^*\m{X}}^\perp(\m{g})}
				-\twon{\cG\cP_{\cG^*\m{X}}^\perp(\m{w})}
			}
			\le
			\twon{\cG\cP_{\cG^*\m{X}}^\perp(\m{g}-\m{w})}
			\le
			\frobn{\cG\cP_{\cG^*\m{X}}^\perp(\m{g}-\m{w})}
			=
			\twon{\cP_{\cG^*\m{X}}^\perp(\m{g}-\m{w})}
			\le\twon{\m{g}-\m{w}}.
	}
	Thus the map
	$\m{g}\mapsto\twon{\cG\cP_{\cG^*\m{X}}^\perp(\m{g})}$
	is $1$-Lipschitz.
	Gaussian concentration \cite[Theorem~5.2.2]{vershynin2018high}, applied to the real and imaginary coordinates, gives, for every $t\ge0$,
	\equ{
			\bP\left\{
			\twon{\cG\cP_{\cG^*\m{X}}^\perp(\m{g})}
			>
			\bE\twon{\cG\cP_{\cG^*\m{X}}^\perp(\m{g})}+t
			\right\} \le2\exp(-t^2/C).
	}

Choose $t=2\sqrt{C(c_s+\ln N)\ln N}$.
Combing with \eqref{eq:orthogonal_gaussian_mean}, we have
\equ{
	\bP\left\{
	\twon{\cG\cP_{\cG^*\m{X}}^\perp(\m{g})}
	>(2+2\sqrt C)\sqrt{(c_s+\ln N)\ln N}
	\right\}
	\le2\exp[-4(c_s+\ln N)\ln N]
	\le\frac12N^{-(c_s+\ln N)}.
	\label{eq:orthogonal_gaussian_tail}
}
For every fixed realization of the coefficients, \eqref{eq:orthogonal_conditional_law} and \eqref{eq:orthogonal_gaussian_tail} imply
\equ{
	K_2
	\le(2+2\sqrt C)
	\frac{\twon{\m{B}\cG^*\m{X}}}{\sqrt m}
	\sqrt{(c_s+\ln N)\ln N}
	\label{eq:conditional_K2_bound}
}
with conditional failure probability at most $\frac12N^{-(c_s+\ln N)}$.
Averaging over the coefficients preserves this failure bound.
On the energy event \eqref{eq:input_energy}, we also have
$\twon{\m{B}\cG^*\m{X}}\le\sqrt2\twon{\cG^*\m{X}}$.
Substituting this into \eqref{eq:conditional_K2_bound} gives
\equ{
	K_2
	\le\sqrt2(2+2\sqrt C)
	\sqrt{\frac{(c_s+\ln N)\ln N}{m}}
	\twon{\cG^*\m{X}}.
	\label{eq:K2}
}
Since the energy event fails with probability at most $\frac12N^{-(c_s+\ln N)}$ and also guarantees \eqref{eq:K1}, a union bound shows that \eqref{eq:K1} and \eqref{eq:K2} hold jointly with probability at least $1-N^{-(c_s+\ln N)}$.

	\textbf{Combining the bounds.}
	Substituting \eqref{eq:K1} and \eqref{eq:K2} into \eqref{eq:J11} gives
	\equ{
		\begin{split}
			\twon{\cG(\m{B}^H\m{B}-\m{I})\cG^*\m{X}}
			& \le\bigl[2\sqrt C+\sqrt2(2+2\sqrt C)\bigr]\sqrt{\frac{(c_s+\ln N)\ln N}{m}}\twon{\cG^*\m{X}}\\
			& \le8\sqrt{\frac{Cr(c_s+\ln N)\ln N}{m}}\twon{\m{X}},
		\end{split}
		\label{eq:lemma18_rate}
	}
	where we used $C\ge1$ and $\twon{\cG^*\m{X}}\le\frobn{\m{X}}\le\sqrt r\twon{\m{X}}$.
	For $0<c<1$, taking $m\ge64Cc^{-2}r(c_s+\ln N)\ln N$ gives the desired bound and satisfies the earlier sample requirement.
	Since $c_s+\ln N\le2\max\{c_s,\ln N\}$, the lemma's sample condition suffices after increasing its constant.
	Averaging over $\m{X}$ preserves the probability bound and completes the proof.

\bibliographystyle{siamplain}
\bibliography{LRIP&FIHT}

\end{document}